\newtheorem{thm}{Theorem}[section]
\newtheorem{lem}[thm]{Lemma}
\newtheorem{prop}[thm]{Proposition}
\newtheorem{cor}[thm]{Corollary}
\newtheorem{rem}[thm]{Remark}
\newtheorem{exam}[thm]{Example}
\newtheorem{nota}[thm]{Notation}
\newtheorem{dfn}[thm]{Definition}
\newtheorem{conj}[thm]{Conjecture}
\newproof{proof}{Proof}
\newcommand{\bbC}{\mathbb C}
\newcommand{\bbN}{\mathbb N}
\newcommand{\bbR}{\mathbb R}
\newcommand{\bbZ}{\mathbb Z}
\newcommand{\bzero}{\mathbf 0}
\newcommand{\bm}{\mathbf m}
\newcommand{\br}{\mathbf r}
\newcommand{\bs}{\mathbf s}
\newcommand{\bt}{\mathbf t}
\newcommand{\bx}{\mathbf x}
\newcommand{\fg}{\mathfrak g}
\newcommand{\cG}{\mathcal G}
\newcommand{\cL}{\mathcal L}
\newcommand{\cW}{\mathcal W}
\newcommand{\rA}{\mathrm A}
\newcommand{\rE}{\mathrm E}
\newcommand{\rG}{\mathrm G}
\newcommand{\rK}{\mathrm K}
\newcommand{\rM}{\mathrm M}
\newcommand{\rN}{\mathrm N}
\newcommand{\rT}{\mathrm T}
\newcommand{\Ker}{\mathrm {Ker} \:}
\newcommand{\Supp}{\mathrm {Supp} \:}
\newcommand{\Span}{\mathrm {Span} \:}
\newcommand{\id}{\mathrm {id} \:}
\newcommand{\Ad}{\mathrm {Ad} \:}
\newcommand{\Forall}{\forall \:}
\begin{document}
%%%%%%%%%%%%%%%%
%\title[Full calculus of pseudo-differential operators ]
%{Green functions and a full calculus of pseudo-differential operators on boundary groupoids with polynomial growth }
%\author[B. K. So ]{Bing Kwan SO }
%\thanks{University of Luxembourg, Campus Kirchberg, Mathematics Research Unit, BLG 
%6, rue Richard Coudenhove-Kalergi, L-1359 Luxembourg}
%
%\begin{abstract}
%In this paper, we enlarge the space of uniformly supported pseudo-differential operators on some groupoids
%by considering kernels satisfying certain asymptotic estimates.
%We show that such enlarged space contains the compact parametrix, 
%and the generalized inverse of uniformly supported operators with Fredholm vector representation.  
%\end{abstract}

\begin{frontmatter}
\title{On the full calculus of pseudo-differential operators on boundary groupoids with polynomial growth }
\author{Bing Kwan SO}
\ead{bkso@graduate.hku.hk}
\address{University of Luxembourg, \\
Campus Kirchberg, Mathematics Research Unit, BLG \\
6, rue Richard Coudenhove-Kalergi, \\
L-1359 Luxembourg, Grand Duchy of Luxembourg }

%\maketitle

\begin{abstract}
In this paper, we enlarge the space of uniformly supported pseudo-differential operators on some groupoids
by considering kernels satisfying certain asymptotic estimates.
We show that such enlarged space contains the compact parametrix, 
and the generalized inverse of uniformly supported operators with Fredholm vector representation.
\end{abstract}

\begin{keyword}
Groupoid \sep Singular pseudo-differential operator
\MSC 58H05 \sep 58J05 \sep 35S05
\end{keyword}

\end{frontmatter}

\maketitle

\section{Introduction}
In this paper, 
we estimate the kernel of generalized inverse of Fredholm Elliptic operators defined on boundary groupoids.

Our work is motivated by \cite{So;PhD}, 
which is in turn motivated by the study of differential operators on manifolds with boundary \cite{Melrose;Book},
\cite{Mazzeo;EdgeRev}.
 
Recall that in the classical construction, one first fixes a boundary defining function $\rho$,
a smooth non-negative function on $\rM$ with non-zero derivative on the boundary $\partial \rM$.
Then an open neighborhood of $\partial \rM \subset \rM$ is identified with $[0 , 1) \times \partial \rM$
(with $[0, 1) $ parameterized by $\rho$).
Differential operators tangential to the boundary are written in the form $\rho \partial _\rho + \cdots $,
and can be identified with kernels on the blowup $\rM _b $, known as the $b$-stretched product.
The $b$-stretched product has three boundary defining functions $\rho _{0 1} , \rho _{1 0} $ and $\rho _{1 1}$.
By some explicit calculations, 
it can be shown that the generalized inverse of a Fredholm elliptic operator is a kernel with asymptotic expansion in 
$\rho _{0 1} , \rho _{1 0} , \rho _{1 1}$.
The space of such kernels is known as the full calculus.

Following the classical theory, many variations emerge.
Most notable is the work of Gil, Krainer and Mendoza.
They considered `cone operators' of the form $ \rho ^{- m} \varPsi $, 
where $\varPsi $ is a $b$-differential operators as described above,
using somewhat similar techniques 
(see \cite{Krainer;ConeRes,Krainer;GeomSpecCone,Krainer;ConeRayMin,Gil;Adjoint}, 
however the theory of cone operators mainly concerns geodesically incomplete spaces and is therefore beyond our scope).

Closer to our discussion,
Lauter, Nistor and Monthubert studied the cusp, or $c _n$-calculus \cite{Nistor;Funct}.
They begin to use some elements of pseudo-differential operators on a groupoid,
which was first introduced by Nistor, Weinstein and Xu 
\cite{NWX;GroupoidPdO},
and further developed by Ammann, Lauter and Nistor into so called Lie manifolds,
or manifolds with Lie structure at infinity \cite{Nistor;LieMfld,Nistor;GeomOp}.
They prove that the Green function of elliptic operators has kernel that decays as a a Schwartz function.
However, these examples are quite similar to the manifold with boundary case,
and their argument make use of the explicit structure of the underlying groupoid,
described through the boundary defining function. 

Ammann, et. al. also apply similar theories to the example of polyhedral domains 
\cite{Nistor;Polyhedral,Nistor;Polyhedral2,Qiao;Thesis}.
In particular, 
\cite{Qiao;Thesis} considers inverse of differential operator as an element in the abstract 
$C^*$-algebra of the underlying groupoid.

The common theme of these results is that the Green function of elliptic differential operators
are in general not compactly support supported kernels.
One has to enlarge the calculus by considering non-compactly supported kernels of order $- \infty$,
possibly non-smooth ones.
Then one shows that the Green function lies in the enlarged calculus.

In the example of, say, natural differential operators on Poisson manifolds,
however, there is no obvious notion of boundary defining functions.
One can only use the theory of groupoid (pseudo)-differential operators to characterize these natural operators.

Motivated by the new class of examples, in \cite{So;PhD}, the author takes a more geometric approach.
The groupoid is taken as the fundamental object, 
and one attempts to do computations without explicitly referring the singular structure.
The idea was applied to the example of the symplectic groupoid of the Bruhat sphere,
where it was shown that the parametrix of an elliptic, uniformed supported pseudo-differential operator 
is given by a groupoid pseudo-differential with exponentially decaying kernel.

Our main objective is to generalize the result of \cite{So;PhD} to other similar groupoids, 
and also describe the generalized inverse of Fredholm operators.
As far as we know, 
this paper is the first systematic study on non-uniformed supported groupoid pseudo-differential operators
in some generality,
besides the purely abstract $C^*$-algebra construction in \cite{Nistor;GeomOp,Qiao;Thesis}.
Moreover, our work should clarify the role of the boundary defining function in these works, 
as well as the classical construction. 

\subsection{An overview of our approach}
While the technical details are tedious and elementary, 
the idea behind our construction is actually very simple.

In Section 2, we recall some basic notions of pseudo-differential operators on a groupoid as in \cite{NWX;GroupoidPdO}.
Then we define the notion of boundary groupoids. 
Essentially these groupoids are just $b$-stretched products with possibly non-commutative isotropy subgroups
and more degenerate Lie algebroids.

In Section 3, we begin with an elementary estimate. 
Perhaps what is remarkable is that such estimate has no direct analogue in the classical construction.
Then we write down the definition of the calculus with bounds. 
These are just kernels that decays exponentially on the $\bs$-fiber and polynomially near the singular set,
with respect to some rather arbitrarily chosen functions.
We show that the convolution product respects the filtration of the calculus with bounds.

In Section 4, we describe the generalized inverse of elliptic differential operators 
(or uniformly supported pseudo-differential operators). 
Our construction is parallel to that of \cite{Mazzeo;EdgeRev}.

Given an elliptic, uniformly supported pseudo-differential operator $\varPsi = \{ \varPsi _x \} _{x \in \rM }$
such that the vector representation of $\varPsi $ is Fredholm,
one starts with the invariant sub-manifold $\cG _r $ with the lowest dimension.
In that case $\varPsi |_{\cG _r } $ 
is an ordinary pseudo-differential operator on the manifold with bounded geometry $\rM _r \times \rG _r$
that is invertible.
Therefore the result of Shubin \cite{Shubin;BdGeom} applies and  
$\varPsi |_{\cG _r } ^{-1 }$ is a kernel with exponential decay.

The second step is to extend the off-diagonal part of $\varPsi |_{\cG _r } ^{-1 }$ into a kernel on $\cG$.
In the case $\cG = \rM _0 \times \rM _0 \bigsqcup \rG \times \rM _1 \times \rM _1 $,
this is constructed by taking exponential coordinates patches defined by Nistor et. al. \cite{Nistor;IntAlg'oid}
and then extend along coordinates curves.
The detail of the construction is given in Appendix B.
Then a uniformly supported parametrix $\varPhi$ of $\varPsi $ on $\cG$ can be modified,
so that $R := I - \varPsi \varPhi $ vanishes on $\cG _r$.

The third step is to improve the parametrix by considering the Neumann series.
One gets a parametrix up to error decaying at order $- \infty $ at the singular set.
The same arguments can be repeated and the case for general $r $ can be proved by induction on $r$.
In the last step of the induction, one obtains the generalized inverse.

In Section 5, we give some more remarks and highlight some open problems.

\subsection{Acknowledgment}
Part of the paper is based on, and much more inspired by, my PhD research at Warwick University. 
I would like to thank Shantanu Dave, Victor Nistor, Yu Qiao, John Rawnsley, Ping Xu,
and the anonymous reviewer for many useful discussions,
and the hospitality of The Penn State University during my visit at summer 2011. 
This project is supported by the AFR (Luxembourg) postdoctoral fellowship.

\section{Preliminary definitions}

\subsection{Uniformly supported pseudo-differential calculus on a Lie groupoid}
In this section, we recall the standard theory of pseudo-differential calculus developed by
Nistor, Weinstein and Xu \cite{NWX;GroupoidPdO}.

\begin{dfn}
A {\it Lie groupoid} $\cG \rightrightarrows \rM $ consists of:
\begin{enumerate}
\item Manifolds $\cG$ and $\rM$;
\item A unit inclusion ${\mathbf u} : \rM \rightarrow \cG$;
\item Submersions $\bs , \bt : \cG \rightarrow \rM$, called the source and target map respectively,
satisfying
$$ \bs \circ {\mathbf u} = \id _\rM = \bt \circ {\mathbf u}; $$
\item A multiplication map 
$\mathbf m : \{ (a, b) \in \cG \times \cG : \bs (a) = \bt (b) \} \rightarrow \cG,
(a , b) \mapsto a b$
that is associative and satisfies
$$ \bs (a b) = \bs (b) , \quad \bt (a b) = \bt (a), 
\quad a ( \mathbf u \circ \bs (a)) = a = ( \mathbf u  \circ \bt (a)) a ; $$
\item An inverse diffeomorphism $\mathbf i : \cG \rightarrow \cG, a \mapsto a^{-1}$,
such that $\bs (a^{-1}) = \bt (a), \\ \bt (a^{-1}) = \bs (a)$ and 
$$ a a^{-1} = \mathbf u (\bt (a)), a^{-1} a = \mathbf u (\bs (a)).$$
\end{enumerate} 
\end{dfn}
Our definition follows the convention of \cite{Mackenzie;Book2}, 
but with the source and target maps denoted by $\bs$ and $\bt$ instead of $\alpha $ and $\beta $.

Let $\cG \rightrightarrows \rM $ be a Lie groupoid with $\rM$ compact.
Fix a metric $g _\rA $ on $\rA$. 
For each $x \in \rM $, a Riemannian metric on $\cG _x $ is defined by right invariance.
Denote the family of Riemannian volume measure on $\cG _x , x \in \rM$ by $\mu _x $.

Observe that for each $x \in \rM $, $\cG _x $ is a manifold with bounded geometry (see Appendix A).
Therefore, $\cG _x $ has at most exponential volume growth.

\begin{dfn} 
We say that $\cG$ is of polynomial (volume) growth if there exists $N \in \mathbb N , C > 0 $ such that
$$ \int _{B _{g _\rA } (a , r) } \mu _x (b) \leq C r ^N ,$$
for any ball on $\cG _x $ centered at $a \in \cG _x $ with radius $r$.
\end{dfn}

From now on, we shall always assume that the groupoid $\cG$ under consideration is of polynomial growth. 

\begin{dfn}
A pseudo-differential operator $\varPsi $ on a groupoid $\cG$ of order $\leq m$ 
is a smooth family of pseudo-differential operators $\{ \varPsi _x \}_{x \in \rM}$,
where $\varPsi _x \in \Psi ^m ( \cG _{x} )$,
and satisfies the right invariance property
$$ \varPsi _{\bs (a)} (\br _a^* f) = \br _a^* \varPsi _{\bt (a)} (f), 
\quad \Forall a \in \cG, f \in C^\infty_c (\cG _{\bs (a)}).$$
If, in addition, all $\varPsi _x $ are classical of order $m$, then we say that $\varPsi $ is classical of order $m$.
\end{dfn}

For a pseudo-differential operator $\varPsi = \{ \varPsi _x \}$ on $\cG$.
The support of $\varPsi $ is defined to be 
$$ \Supp (\varPsi) = \overline {\bigcup_{x \in \rM} \Supp (\varPsi _x)}.$$
The operator $\varPsi $ is called uniformly supported if the set
$$ \{ a b^{-1} : (a, b) \in \Supp (\varPsi) \} $$
is a compact subset of $\cG$. 
We denote the algebra of uniformly supported classical pseudo-differential operator of order $m$ on $\cG $ by 
$\Psi ^{[m]} (\cG)$ ($\Psi ^{[m]} (\cG, \rE)$ for operators defined on a sections of a vector bundle $\rE \to \rM$),
and $\Psi ^\bullet := \bigcup _{m \in \mathbb Z } (\cG )$. 

The convolution product on $\cG$ is the binary operator on $C ^\infty (\cG)$:
\begin{equation}
f \circ g (a) := \int _{\bs ^{-1} (\bs (a))} f (a b ^{-1} ) g (b) \mu _{\bs (a)} (b), \quad \Forall a \in \cG 
\end{equation}
for any $f, g \in C ^\infty (\cG)$, provided the integral is finite for all $x \in \rM$.

For any $\varPsi = \{ \varPsi _x \}_{x \in \rM} \in \Psi ^\infty (\cG)$.
The {\it reduced kernel} of $\varPsi $ is defined to be the distribution
$$ \psi (f) :=  \int _\rM \mathbf u^* (\varPsi (\mathbf i^* f)) (x) \: \mu _\rM (x), 
\quad f \in C^\infty _c (\cG).$$

\begin{lem}
\label{RedKerId}
\cite[Corollary 1]{NWX;GroupoidPdO}
For any $\varPsi \in \Psi ^\bullet (\cG)$, the reduced kernel is co-normal at $\rM$ and smooth elsewhere.
Moreover, the map $\varPsi \mapsto \psi $, where $\psi $ is the reduced kernel of $\varPsi $, 
is an algebra isomorphism.
\end{lem}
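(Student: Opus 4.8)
The statement I must prove (Lemma~\ref{RedKerId}, taken from \cite{NWX;GroupoidPdO}) asserts two things: that the reduced kernel $\psi$ of a uniformly supported pseudo-differential operator $\varPsi$ is conormal at the unit space $\rM$ and smooth away from it, and that the assignment $\varPsi \mapsto \psi$ intertwines composition of operators with the convolution product, giving an algebra isomorphism onto its image. The plan is to treat the two halves separately: first the local regularity of $\psi$, then the multiplicativity.

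**Regularity of the reduced kernel.** First I would fix $x \in \rM$ and recall that, by definition, $\varPsi_x \in \Psi^m(\cG_x)$ has a Schwartz kernel $K_x \in \mathcal{D}'(\cG_x \times \cG_x)$ that is conormal along the diagonal and smooth off it. The reduced kernel is obtained by pushing these forward under the orbit map and using right invariance to reduce to a single $\bs$-fibre: concretely, $\psi$ is represented (after integrating out the redundant direction along $\rM$) by the family $a \mapsto K_{\bt(a)}(a, \mathbf{u}(\bs(a)))$, i.e.\ one restricts the Schwartz kernel of $\varPsi_{\bt(a)}$ to the "diagonal times units" slice. Conormality of $K_x$ along the diagonal of $\cG_x \times \cG_x$ transfers, under this restriction, to conormality of $\psi$ along $\mathbf{u}(\rM) \subset \cG$, because $\mathbf{u}(\rM)$ is exactly the locus $\{a : \bt(a) = \bs(a),\ a = \mathbf u(\bs(a))\}$ and the restriction map is transverse to the diagonal in the appropriate sense; smoothness of $K_x$ off the diagonal gives smoothness of $\psi$ on $\cG \setminus \mathbf{u}(\rM)$. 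The uniform support hypothesis is what guarantees the fibrewise integral defining $\psi(f)$ converges and that the family $\{K_x\}$ glues to an honest distribution on $\cG$ rather than merely a fibrewise object; this is where I would be most careful, checking that conormality is uniform in $x$ so the glued object is genuinely conormal.

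**Multiplicativity.** Second I would show $\varPsi \mapsto \psi$ is an algebra homomorphism, i.e.\ that the reduced kernel of $\varPsi \varPhi$ is $\psi \circ \varphi$. The natural route is to unwind both sides: the operator $\varPsi_x \varPhi_x$ on $\cG_x$ has kernel $\int K^\varPsi_x(a,c) K^\varPhi_x(c,b)\, \mu_x(c)$, and substituting this into the definition of the reduced kernel, then using right invariance to move everything to the $\bs$-fibre over a fixed point, one recognizes exactly the convolution integral $\int_{\bs^{-1}(\bs(a))} \psi(a b^{-1}) \varphi(b)\, \mu_{\bs(a)}(b)$. The change of variables $c \mapsto cb^{-1}$ and the identities $\bs(ab^{-1}) = \bt(b)$, $\bt(ab^{-1}) = \bt(a)$ do the bookkeeping. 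Injectivity is clear because $\varPsi$ is recovered from $\psi$ by right-translating, and surjectivity onto the stated image is a matter of definition; the identity operator maps to the delta distribution at $\mathbf{u}(\rM)$, which is the unit for $\circ$.

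**Main obstacle.** The genuinely delicate point is not the formal computation but controlling the distributions: justifying the interchange of the fibrewise $\mu_x$-integral with the pairing against $f$, and verifying that the uniform-support condition plus the polynomial-growth assumption make the convolution of two conormal-plus-rapidly-decreasing kernels well defined and again of the same type. In other words, the heart of the proof is an analytic estimate ensuring the convolution integral converges and that conormality is preserved under it — precisely the kind of estimate the paper isolates in Section~3 — rather than the algebraic identity, which is essentially a change of variables on $\cG$. I would therefore organize the argument so that the regularity/convergence lemma is proved first and the multiplicativity then follows by a direct, if lengthy, manipulation of integrals.
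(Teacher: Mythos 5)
The paper offers no proof of this lemma --- it is quoted directly from \cite[Corollary 1]{NWX;GroupoidPdO} --- and your sketch follows the standard argument of that reference: right invariance identifies the family of fibrewise Schwartz kernels with a single distribution on $\cG$ conormal to $\mathbf u (\rM)$, and fibrewise composition unwinds, after the change of variables you indicate, to the convolution product. Two small caveats: for \emph{uniformly supported} operators the reduced kernels are compactly supported, so the convolution converges for free and neither polynomial growth nor the decay estimates of Section~3 are needed here (those enter only for the enlarged calculus); and the kernel slice should be $K_{\bs (a)}(a, \mathbf u (\bs (a)))$ rather than $K_{\bt (a)}(\cdot,\cdot)$, since $a \in \cG _x$ means $\bs (a) = x$.
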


\begin{rem}
By virtue of Lemma \ref{RedKerId}, 
there are three equivalent ways to define the algebra of pseudo-differential operator on $\cG$, namely
\begin{enumerate}
\item
Fiberwise composition $ \varPsi \varPhi = \{ \varPsi _x \varPhi _x \}_{x \in \rM }$;
\item
Convolution product $\psi \circ \varphi $, 
where $\psi $ and $\varphi $ is the reduced kernel of $\varPsi $ and $\varPhi $ respectively;
\item
The fiberwise operation $ \varPsi _x (\varphi |_{\cG _x}) , x \in \rM $.
\end{enumerate}
\end{rem}

For any $\varPsi \in \Psi ^\bullet (\cG )$, the vector representation of $\varPsi $ is the operator
$\nu (\varPsi ) : C ^\infty (\rM ) \to C ^\infty (\rM )$,
$$ (\nu (\varPsi ) f) := \varPsi _x (\bt ^* f)|_{\rM }. $$ 
Note that if $X \in \Gamma ^\infty (\rA)$ is regarded as a differential operator on $\cG$,
then the vector representation of $X$ is just $\nu (X)$,
the image of $X$ under the anchor map (regarded as a differential operator on $\rM$),
so there is no confusion using the same notation for both.

\subsection{Boundary groupoids}
We define the main object we are interested in.
\begin{dfn} 
Let $\cG \rightrightarrows \rM $ be a Lie groupoid with $\rM$ compact.
We say that $\cG$ is a {\it boundary groupoid} if 
\begin{enumerate}
\item
The singular foliation defined by anchor map $\nu : \rA \to T \rM $ has a finite number of leaves 
$\rM _0 , \rM _1 , \cdots , \rM _r \subset \rM $, called invariant sub-manifolds on $\rM$,
such that $\dim \rM = \dim \rM _0 > \dim \rM _1 > \cdots > \dim \rM _r $;
\item
For all $k = 0, 1, \cdots r$, 
$\bar \rM _k := \rM _ k \bigcup \rM _{k + 1} \bigcup \cdots \bigcup \rM _r $ are closed, 
immersed sub-manifolds of $\rM$;
\item
$\cG _{0 } := \bs ^{-1} (\rM _0 ) = \bt ^{-1} (\rM _0 ) \cong \rM _0 \times \rM _0$, the pair groupoid, and 
$\cG _{k } := \bs ^{-1} (\rM _k ) = \bt ^{-1} (\rM _k )
\cong \rG _k \times ( \rM _k \times \rM _k ) $ for some Lie groups $\rG _k $;
\item
For each $k$, there exists (unique) sub-bundles $\bar \rA _k \subset \rA |_{\bar \rM _k}$
such that $\bar \rA |_{\rM _k } = \ker (\nu |_{\rM _k }).$  
\end{enumerate}
\end{dfn}
For simplicity, we shall also assume that $\rG _k $ and $\rM _k $ are connected, hence all $\bs$-fibers are connected.

\begin{rem}
Note that we do not need any condition on the dimension on $\rM _k$.
On the other hand since for each $k = 0, \cdots , r - 1$, $\rM _k$ is an open and dense subset of $\bar \rM _k$,
it follows $\bar \rM _{k +1 } $ is the boundary of $\rM _k$ in $\rM$.
\end{rem}

\begin{nota}
We shall also denote $\bar \cG _k := \bs ^{-1} (\bar \rM _k ) = \bt ^{-1} (\bar \rM _k ) $.
\end{nota}  

\begin{exam}
Let $\rM = \rM _0 \bigsqcup \rM _1 $ be a manifold with embedded boundary \cite{Melrose;Book}.
The groupoid of space of totally characteristic operators is given (as a set) by
$$ \cG := (\rM _0 \times \rM _0 ) \bigsqcup \bbR \times (\rM _1 \times \rM _1 ).$$
Note that $\cG $ is an open dense subset of the blowup of $\rM$, 
known as the b-stretched product (see \cite{M'bert;CornerGroupoids}).
\end{exam}

\begin{exam} 
\label{BruhatExam}
\cite[Example 2.18]{So;PhD} (See also \cite{Lu;PoissonCohNotes})
Let $\rK = \mathrm {S U } (n) $, $\rT \subset \rK$ be the maximal torus, 
$\rN $ be the Lie group of upper triangular matrices with unit diagonal.

Define the left action of $\rT$ on $\rK \times \rN$ by
$$ g \cdot (k , n) := (g k, g n g ^{-1}), \quad \Forall (k, n) \in \rK \times \rN, g \in \rT .$$
It is easy to see that the projection onto $ \rT \backslash (\rK \times \rN) $ is a submersion. 

Define the groupoid operations on $\cG := \rT \backslash (\rK \times \rN) \rightrightarrows \rT \backslash \rK $:
\begin{align*}
\text {source and target: } & \bs ( {}_\rT (k, n) ) = {}_ \rT k , \bt ( {}_\rT (k, n) ) := {}_\rT k' , \\
& \text {where } n k = k' a' n' \text { is the (unique) Iwasawa decomposition;} \\  
\text {multiplication: } &  \mathbf m ({}_\rT (k _1 , n _1) , {}_ \rT (k _2 , n _2) ) := {}_\rT (k _2 , n _1 n _2) , \\
& \text {provided one has Iwasawa decomposition } n _2 k _2 = k _1 a' n' ; \\
\text {inverse: } & \mathbf i ( {}_\rT (k, n)) := {}_\rT (k', n^{-1}) , \\
& \text {where } n k = k' a' n' \text { is the (unique) Iwasawa decomposition;} \\  
\text {unit: } & \mathbf u ({}_ \rT k ) := {}_ \rT (k , e) , e \in \rN .
\end{align*} 

Remark that $\rT \backslash (\rK \times \rN) \rightrightarrows \rT \backslash \rK $ 
is just the symplectic groupoid of the Bruhat Poisson structure on $\rK$.
In particular, when $n = 2$, $ \rT \backslash \rK $ is just the sphere $\mathbb S ^2$.
Let $(x, y) $ be the stereographic coordinate opposite to ${}_\rT e$,
the Poisson bi-vector field is 
$$ \varPi = (x ^2 + y ^2 )(1 + x ^2 + y ^2 ) \partial _x \wedge \partial _y .$$
The Lie algebroid is $\rA = T ^* \mathbb S ^2 $ and the anchor map is contraction with $\varPi $.
\end{exam}

\begin{exam}
Recall that any Poisson structure $\varPi \in \Gamma ^\infty (\wedge ^2 T \rM ) $ defines a Lie algebroid structure
on the cotangent bundle $T ^* \rM$, 
and any bi-vector field $\varPi $ on a two dimensional manifold $\rM$ is Poisson (see, for example, \cite{Vas;Book}).
In particular, let
$\varPi $ be any bi-vector field on the sphere $\mathbb S ^2$ such that $\varPi $ vanishes at exactly one point $x _0 $.
Then the Lie algebroid is integrable \cite{Debord;IntAlgebroid}.
It is easy to see that the groupoid integrating $T ^* \mathbb S ^2$ must be a boundary groupoid.
Since on some fixed local coordinates around $x _0 $ one can take 
$\varPi = f (x, y) \partial _{x } \wedge \partial _{y} $ for arbitrary smooth function $f $ vanishing at $x _0 $ only,
differential operators obtained this way in general cannot be reduced to the cases considered in \cite{Mazzeo;EdgeRev}. 
\end{exam}

\subsection{The Fredholmness criterion of Lauter and Nistor}
%Let $\cG \rightrightarrows \rM $ be a boundary groupoid.
%Fix a metric on $\rA $, and hence on each $\bs$-fiber $\cG _x $ of $\cG$ by right invariance.
By definition, any boundary groupoids 
$\cG \rightrightarrows \rM $ satisfy the condition of Lauter and Nistor \cite{Nistor;GeomOp},
which we recall here:
\begin{dfn}
\label{BdGpoid}
An $\bs$-connected groupoid $\cG \rightrightarrows \rM$ is said to be a {\it Lauter-Nistor groupoid} if
\begin{enumerate}
\item
The unit set $\rM $ is compact;
\item
The anchor map $\nu : \rA \rightarrow T \rM$ is bijective over an open dense subset $\rM_0 \subseteq \rM$;
\item
The Riemannian manifold $(\rM_0 , g _{\rM _0}) $ has positive injectivity radius
and has finitely many connected components $\rM _0 = \coprod _\alpha \rM _\alpha $;
\item
As a groupoid,
$\cG_{\rM_0} \cong \coprod _\alpha \rM_\alpha \times \rM_\alpha $, the pair groupoid.
\end{enumerate}
\end{dfn}

Let $\varPsi $ be pseudo-differential operator on $\cG $.
By right invariance, it is clear that 
\begin{lem}
For any $x \in \rM $, $\varPsi _x $ is a uniformly bounded pseudo-differential operator 
on the manifold with bounded geometry $\cG _x $.
\end{lem}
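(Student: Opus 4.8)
The statement asserts that right invariance, together with compactness of $\rM$ and uniform supportedness, forces each fibre operator $\varPsi_x$ into a \emph{uniform} pseudo-differential calculus in the sense of Shubin \cite{Shubin;BdGeom}: when written in geodesic normal coordinates of a fixed radius (below the injectivity radius of $\cG_x$, which is bounded below uniformly by Appendix A) about any point, the full symbol lies in a fixed bounded subset of the H\"ormander class $S^m$, with seminorm bounds $|\partial_z^\alpha\partial_\xi^\beta a(z,\xi)|\leq C_{\alpha\beta}(1+|\xi|)^{m-|\beta|}$ independent of the base point, and the Schwartz kernel is uniformly properly supported near the diagonal. The plan is therefore to produce such constants $C_{\alpha\beta}$ \emph{and} a uniform proper-support bound, both independent not only of the point of $\cG_x$ but also of $x\in\rM$.

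The first step is to use right invariance to reduce the point-uniformity on $\cG_x$ to uniformity over $\rM$. For $b\in\cG_x$ the right translation $\br_b:\cG_{\bt(b)}\to\cG_{\bs(b)}=\cG_x$ is an isometry (all fibre metrics being defined by right invariance from $g_\rA$) carrying the unit $\mathbf{u}(\bt(b))$ to $b$, and the right-invariance identity $\varPsi_x\br_b^*=\br_b^*\varPsi_{\bt(b)}$ shows that the germ of $\varPsi_x$ at $b$ is carried isometrically onto the germ of $\varPsi_{\bt(b)}$ at its unit. Hence the local symbol of $\varPsi_x$ in normal coordinates centred at $b$ agrees, up to the change of coordinates given by $\br_b$ (which has all derivatives bounded uniformly by bounded geometry), with the local symbol of $\varPsi_y$, $y=\bt(b)$, in normal coordinates centred at $\mathbf{u}(y)$. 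So it suffices to bound, uniformly in $y\in\rM$, the symbol of $\varPsi_y$ in a single normal chart around $\mathbf{u}(y)$ and, separately, the off-diagonal support of $\varPsi_y$.

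For the symbol bounds, recall that $\{\varPsi_y\}_{y\in\rM}$ is by definition a \emph{smooth} family of pseudo-differential operators on the fibres $\cG_y$ \cite{NWX;GroupoidPdO}; picking a smoothly varying family of normal charts around the units (for instance via a tubular neighbourhood of $\rM$ in $\cG$ composed with the fibrewise exponential maps), the associated full symbols $a_y(z,\xi)$ depend smoothly on $y$ together with all $(z,\xi)$-derivatives, so each $S^m$-seminorm is a continuous function of $y$ on the compact set $\rM$, hence bounded. For the support, write $\psi$ for the reduced kernel of $\varPsi$ (Lemma \ref{RedKerId}); by right invariance the Schwartz kernel of $\varPsi_y$ at $(a,b)\in\cG_y\times\cG_y$ is $\psi$ evaluated at $ab^{-1}$ against the right-invariant density, and uniform supportedness says exactly that $\Supp(\psi)=\{ab^{-1}:(a,b)\in\Supp(\varPsi)\}$ is a compact, hence bounded, subset of $\cG$, say contained in a $g_\rA$-ball of radius $D$ about $\mathbf{u}(\rM)$. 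Since $d_{\cG_y}(a,b)=d_{\cG_{\bt(b)}}(ab^{-1},\mathbf{u}(\bt(b)))$ (right translation being a fibrewise isometry) and the latter is comparable to the ambient distance on the compact set $\Supp(\psi)$, all the $\varPsi_y$ are properly supported within a fixed distance of the diagonal. Combining the two estimates places $\varPsi_x$ in the uniform calculus of \cite{Shubin;BdGeom} with seminorms bounded independently of $x$.

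The only genuinely delicate point is the bookkeeping in the reduction step: one must verify that geodesic normal coordinates on $\cG_x$ centred at an arbitrary point really can be realised as right translates of one smoothly varying family of normal charts about the units, so that ``uniform over points of $\cG_x$'' and ``uniform over $x\in\rM$'' become the same statement. This is precisely where bounded geometry of the fibres (uniform lower bound on injectivity radius, uniform $C^k$-control of the metric, from Appendix A) and the smoothness of the family $\{\varPsi_x\}$ are both needed; once this identification is in place, everything else is a compactness argument on $\rM$.
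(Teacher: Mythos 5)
Your argument is correct and is the natural fleshing-out of the paper's justification, which consists solely of the phrase ``by right invariance, it is clear that'': the reduction via the fibrewise isometries $\br_b$ to a compactness/smoothness argument over $\rM$ for the symbol seminorms, together with uniform supportedness of the reduced kernel for the proper-support half of Shubin's definition, is exactly what that phrase is standing in for. Note only that the lemma as stated does not explicitly hypothesize that $\varPsi$ is uniformly supported, so your invocation of that property (which is genuinely needed for the support condition in the definition of the uniform calculus, and is implicit in the surrounding context where $\varPsi\in\Psi^{[m]}(\cG,\rE)$) is a correct reading of the statement rather than an unjustified extra assumption.
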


Moreover, since $\cG _{\rM _0 } \cong \rM _0 \times \rM _0 $,
we define a Riemannian metric on $\rM _0 \subseteq \rM $ by taking the metric on $\cG _x $ for any $x \in \rM _0 $. 
Now since $\rM _0 $ is a manifold with bounded geometry, 
we shall consider the `natural' Sobolev spaces $\cL ^2 (\rM _0 ) $ 
and $\cW ^m (\rM _0 ) $ as defined in Equation (\ref{UBSobo}) in the appendix.

Recall that any uniformly bounded pseudo-differential operator of order $m$ on the manifolds with bounded geometry
$\cG _x $ (and $\rM _0 $) extends to a bounded linear map from 
$\cW ^m (\cG _x , \bt ^{-1} \rE ) $ to $ \cL ^2 (\cG _x , \bt ^{-1} \rE )$.
With these notations, the Fredholmness criterion of Lauter and Nistor can be stated as:
\begin{prop}
\cite[Theorem 9]{Nistor;GeomOp} 
Let $\cG $ be a Lauter-Nistor groupoid and $\varPsi \in \Psi ^{[m]} (\cG , \rE)$ be elliptic. 
Then $\nu (\varPsi) : \cW ^m (\rM _0 , \rE ) \to \cL ^2 (\rM _0 , \rE )$ is Fredholm if and only if,
for all $x \in \rM \setminus \rM _0$,
$\varPsi _x : \cW ^m (\cG _x , \bt ^{-1} \rE ) \to \cL ^2 (\cG _x , \bt ^{-1} \rE )$ are invertible.
\end{prop}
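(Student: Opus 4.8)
This is quoted from \cite{Nistor;GeomOp}, but I would establish it as follows. Fix $z_0 \in \rM_0$ and identify $\cG_{z_0} \cong \rM_0$; after composing with a fixed invertible elliptic element of order $m$ I may assume $\cW^m(\rM_0,\rE)$ has been identified with $\cL^2(\rM_0,\rE)$, so that Fredholmness of $\nu(\varPsi)$ means invertibility of its class in the Calkin algebra. Two remarks set the stage. First, because $\cG_{\rM_0} = \rM_0 \times \rM_0$ is the pair groupoid, any smooth kernel on $\cG$ supported in a compact subset of $\cG_{\rM_0}$ is automatically uniformly supported on $\cG$, hence lies in $\Psi^{[-\infty]}(\cG,\rE)$, is Hilbert--Schmidt on $\cL^2(\rM_0,\rE)$, and such kernels are dense in $\mathcal{K}(\cL^2(\rM_0,\rE))$; thus $\mathcal{K}(\cL^2(\rM_0,\rE)) \subseteq \overline{\Psi^{[0]}(\cG,\rE)}$. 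Second, a Fredholm operator admits no ``singular sequence'': no bounded $v_j \rightharpoonup 0$ with $\inf_j \|v_j\| > 0$ and $\|\nu(\varPsi) v_j\| \to 0$, and none with $\|\nu(\varPsi)^* v_j\| \to 0$.

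\emph{Sufficiency.} Assume every $\varPsi_x$, $x \in \rM \setminus \rM_0$, is invertible. Following \cite{Mazzeo;EdgeRev} (and Section~4 below), I would build a parametrix by induction on $r$, working outward from the deepest stratum. Starting from an ordinary elliptic parametrix $\varPhi_0 \in \Psi^{[-m]}(\cG,\rE)$ with $R_0 := I - \varPsi \varPhi_0 \in \Psi^{[-\infty]}(\cG,\rE)$, note that $\varPsi|_{\cG_r}$ is an invertible elliptic operator on the bounded-geometry manifold $\cG_r \cong \rG_r \times \rM_r \times \rM_r$, so by Shubin \cite{Shubin;BdGeom} its inverse has an exponentially decaying kernel; extending the off-diagonal part of this inverse to a kernel on $\cG$ by the coordinatewise procedure of Appendix~B, correcting $\varPhi_0$, and summing a Neumann series, one arranges that the remainder vanishes to infinite order along $\cG_r$. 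Repeating over $\rM_{r-1}, \dots, \rM_1$ yields $\varPhi \in \Psi^{[-m]}(\cG,\rE)$ with $R := I - \varPsi \varPhi$ smoothing, uniformly supported, and vanishing to infinite order along $\bar\cG_1 = \bs^{-1}(\rM \setminus \rM_0)$. Then $\nu(R)$ is Hilbert--Schmidt: its Schwartz kernel $k$ on $\rM_0 \times \rM_0$ is supported, inside $\cG$, in a fixed compact set, hence lies within bounded $g_\rA$-distance of the diagonal, and its infinite-order vanishing as one approaches $\bar\cG_1$, together with the polynomial volume growth of the fibres, forces $k$ to be square-integrable. The same applies to $I - \varPhi\varPsi$, so $\nu(\varPsi)$ is Fredholm.

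\emph{Necessity.} Suppose $\nu(\varPsi)$ is Fredholm but $\varPsi_{x_1}$ is not invertible on $\cG_{x_1}$ for some $x_1 \in \rM \setminus \rM_0$. Then $\varPsi_{x_1}$ or $\varPsi_{x_1}^* = (\varPsi^*)_{x_1}$ is not bounded below, so there are normalized elements $u_j$ on $\cG_{x_1}$, which after truncation we may take compactly supported, with $\|\varPsi_{x_1} u_j\| \to 0$ (resp.\ $\|\varPsi_{x_1}^* u_j\| \to 0$). Choosing $z_j \in \rM_0$ with $z_j \to x_1$ sufficiently fast and transporting $u_j$ onto $\cG_{z_j} \cong \rM_0$ via the exponential coordinate charts of Nistor et al.\ \cite{Nistor;IntAlg'oid}, one obtains $v_j \in \cL^2(\rM_0,\rE)$ with $\inf_j \|v_j\| > 0$, with $\|\nu(\varPsi) v_j\| \to 0$ (resp.\ $\|\nu(\varPsi)^* v_j\| \to 0$) because under the charts $\varPsi_{z_j}$ converges to $\varPsi_{x_1}$, and with $v_j \rightharpoonup 0$ because the supports of the $v_j$ leave every compact subset of $\rM_0$ (the boundary lies at infinite $g_\rA$-distance). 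This contradicts the second remark, so all $\varPsi_x$ are invertible.

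\emph{Anticipated main obstacle.} On the sufficiency side the subtle point is that $\nu(R)$ becomes compact only once $R$ is pushed to vanish to \emph{infinite} order at the singular set --- which is exactly what forces the Neumann-series step and the induction over $\rM_1, \dots, \rM_r$ --- and it is precisely here that the standing polynomial-growth hypothesis is genuinely used, to overcome the volume growth of $\cG_x$ in the Hilbert--Schmidt estimate; a finite order of vanishing, or exponentially growing fibres, would not be enough. On the necessity side the technical heart is the uniform control, as $z_j \to x_1$, of the transport diffeomorphisms $\cG_{z_j} \to \cG_{x_1}$ and of the convergence $\varPsi_{z_j} \to \varPsi_{x_1}$ --- that is, the continuity of the field $\{\varPsi_x\}_{x \in \rM}$ up to the boundary.
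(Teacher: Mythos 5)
The paper does not prove this Proposition: it is quoted verbatim as Theorem 9 of \cite{Nistor;GeomOp}, so there is no internal proof to compare yours against. Lauter and Nistor's own argument is $C^*$-algebraic rather than constructive: one shows that the vector representation carries the norm-closed ideal of order-$-\infty$ operators whose restrictions to all boundary fibers vanish onto $\mathcal K(\cL^2(\rM_0,\rE))$, and then Fredholmness becomes invertibility in the quotient algebra, which is detected exactly by the principal symbol together with the boundary representations $\varPsi_x$. Your necessity direction is a sound sketch of the standard singular-sequence argument (transport of approximate null vectors of $\varPsi_{x_1}$ or $\varPsi_{x_1}^*$ to nearby fibers $\cG_{z_j}\cong\rM_0$ via exponential charts, plus a diagonal argument); this is essentially how the weak containment of $\pi_{x_1}$ in the vector representation is established, and the points you flag (uniform control of the transport, escape of supports from compacta) are the right ones.

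The sufficiency direction, however, has a genuine gap at the stated level of generality. A Lauter--Nistor groupoid (Definition \ref{BdGpoid}) carries none of the structure your parametrix construction needs: no stratification $\rM_0,\dots,\rM_r$ with $\cG_k\cong\rG_k\times\rM_k\times\rM_k$, no uniform degeneracy, no standing polynomial-growth hypothesis, and in particular no extension property --- which even for boundary groupoids is only established in this paper for $\cG=(\rM_0\times\rM_0)\sqcup\rG\times(\rM_1\times\rM_1)$ with $\rG$ nilpotent (Theorem \ref{SimpleExt}) and is otherwise Conjecture \ref{MainConj}. So the induction ``outward from the deepest stratum'' does not prove the Proposition as stated; it at best reproves it for the restricted class of groupoids treated in Section 4, where the paper in fact runs the argument in the opposite logical direction (Theorem \ref{CptLem1} only arranges $R|_{\cG_1}=0$ and then \emph{cites} \cite{Nistor;GeomOp} for compactness of $\nu(R)$). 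A smaller but concrete error: after the Neumann-series correction the remainder $R=I-\varPsi\varPhi$ is \emph{not} uniformly supported --- the corrections $\varPhi'$ live in $\Psi^{-\infty}_{\bullet;\bzero}(\cG)$ and only decay exponentially off the diagonal --- so your Hilbert--Schmidt estimate cannot invoke a fixed compact support; it must instead combine the exponential off-diagonal decay, the polynomial fiber growth, and the integrability of $\hat\rho^{2\lambda}$ against the (infinite) volume of $\rM_0$, which is where Theorem \ref{DEst} would actually enter. That estimate can be made to work, and it would give a self-contained replacement for the compactness criterion of \cite{Nistor;GeomOp} (thereby avoiding the latent circularity with Section 4), but as written the justification is incorrect and the general case of the Proposition remains unproved by this route.
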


\section{The calculus with bounds}

\subsection{A structural theorem for Lie algebroids}
Given a boundary groupoid $\cG \rightrightarrows \rM$.
Fix any Riemannian metric $\bar g $ on $\rM $.
For each $k \geq 1 $, let $d ( \cdot , \bar \rM _{k }) $ be the distance function defined by $ \bar g $.

For each $k \geq 0$, fix a function $\rho _k \in C^\infty (\rM)$ such that 
$\rho _k > 0 $ on $\rM \setminus \bar \rM _k $ 
and $\rho _k = d ( \cdot , \bar \rM _k) $ on some open set containing $ \bar M _k $.

Fix a metric $g _\rA $ on $\rA $ (i.e. a positive bi-linear form on $\rA$).
Then $ g _\rA $ induces Riemannian metrics on the 
$\bs $-fibers $\cG _ x := \bs ^{-1} (x) $ for each $x \in \rM $ by right invariance.

\begin{exam}
To motivate the construction, we briefly consider the example of manifold with embedded boundary \cite{Melrose;Book}.
Write $\rM = \rM _0 \bigsqcup \rM _1 $,
Take a collar neighborhood of the boundary $\rM _1 $, $[0 , 1) \times \rM _1 \subset \rM $.
For simplicity, assume $\bar g $ is such that $\bar g |_{ [0 , 1) \times \rM _1 }$ is the product metric. 
Then $\rho _1 := d _{\bar g} ( \cdot , \rM _1 ) $ is just the boundary defining function.
\end{exam}

We shall see in this section how the $\rho _k$'s play the role of boundary defining function.
We begin with considering the bundle map $ d \rho _k \circ \nu : \rA \to \bbR \times \rM $.

\begin{lem} 
\label{LocalDegen}
For each $k$, there exists a constant $\omega _k$ 
such that for any $ x $ lying in some open neighborhood of $\bar \rM _k$, $ X \in A _x ,$
\begin{equation}
\label{LocalDegEq}
| d \rho _k \circ \nu (X) | \leq \omega _k \rho _k (x) | X |_{g _\rA} .
\end{equation}
\end{lem}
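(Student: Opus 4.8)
The plan is to reduce the inequality to a statement about vanishing of a section of a vector bundle along a submanifold, and then apply a Taylor-expansion (Hadamard-type) argument. First I would observe that $d\rho_k\circ\nu$ is a smooth bundle map $\rA\to\bbR\times\rM$, i.e. a smooth section $\eta_k$ of $\rA^*$ over $\rM$; the claimed estimate says precisely that $|\eta_k(x)|_{g_\rA}\leq\omega_k\,\rho_k(x)$ for $x$ near $\bar\rM_k$. So it suffices to show that $\eta_k$ vanishes on $\bar\rM_k$ to first order in the sense that $\eta_k/\rho_k$ extends to a bounded (indeed continuous) section near $\bar\rM_k$. Since $\rho_k=d(\cdot,\bar\rM_k)$ on a neighborhood of $\bar\rM_k$, and $\bar\rM_k$ is a closed immersed submanifold (Definition of boundary groupoid, item (ii)), the function $\rho_k$ is itself smooth away from $\bar\rM_k$ and behaves like a defining function near it; the key point is that $\eta_k$ vanishes identically on $\bar\rM_k$.

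The vanishing of $\eta_k$ on $\bar\rM_k$ is where I would use the structure of boundary groupoids. For $x\in\rM_j$ with $j\geq k$, the anchor $\nu|_{\rM_j}$ has image $T\rM_j\subseteq T\rM$, because $\rM_j$ is an invariant submanifold (a leaf of the singular foliation). Hence for $X\in\rA_x$ we have $\nu(X)\in T_x\rM_j\subseteq T_x\bar\rM_k$. But $\rho_k$ restricted to $\bar\rM_k$ is constant (equal to $0$) on a neighborhood, so $d\rho_k$ annihilates $T_x\bar\rM_k$, giving $\eta_k(x)(X)=d\rho_k(\nu(X))=0$. This shows $\eta_k|_{\bar\rM_k}=0$. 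Now I would invoke the following standard fact: if $s$ is a smooth section of a vector bundle over a manifold, vanishing on a closed embedded (or immersed, working locally) submanifold $Z$ that has a defining function $\rho$ near it, then $s=\rho\cdot\tilde s$ for some smooth section $\tilde s$ near $Z$; in particular $|s|\leq C\rho$ locally, and by compactness of $\rM$ (so $\bar\rM_k$ is compact) one gets a uniform constant $\omega_k$ on a neighborhood of $\bar\rM_k$. Strictly, since $\rho_k$ is only Lipschitz (not smooth) exactly on $\bar\rM_k$, I would phrase this via the mean value / Taylor-with-remainder estimate: choosing local coordinates in which $\bar\rM_k=\{y_1=\cdots=y_p=0\}$ and $\rho_k$ comparable to $(\,y_1^2+\cdots+y_p^2\,)^{1/2}$, the components of $\eta_k$ vanish at $y'=0$ and are $C^1$, hence are $O(|y'|)=O(\rho_k)$.

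The main obstacle I anticipate is handling the interaction between the metric $g_\rA$ on $\rA$ (used to measure $|X|$) and the coordinate trivialization of $\rA$ used in the Taylor estimate: one must check that the bound $|\eta_k(X)|\leq C|y'|\cdot\|X\|_{\mathrm{coord}}$ transfers to $|\eta_k(X)|\leq\omega_k\rho_k(x)|X|_{g_\rA}$, which follows from equivalence of the coordinate norm and $g_\rA$ on the compact neighborhood, but deserves a line. A secondary subtlety is the word "immersed" rather than "embedded" for $\bar\rM_k$: since it is also closed and $\rM$ is compact, near any point of $\bar\rM_k$ only finitely many sheets appear, and the argument is local on each sheet, so taking the maximum of the finitely many local constants suffices; alternatively one works on the normal directions to the top-dimensional leaf $\rM_k$ contained in $\bar\rM_k$ and uses item (iv) of the boundary-groupoid definition, which provides the subbundle $\bar\rA_k$ with $\bar\rA_k|_{\rM_k}=\ker(\nu|_{\rM_k})$, to organize the expansion. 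Everything else is routine.
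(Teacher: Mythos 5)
Your proposal is correct and follows essentially the same route as the paper: work in Fermi-type coordinates adapted to $\bar \rM _k$, observe that the anchor components normal to $\bar \rM _k$ vanish on $\bar \rM _k$ (since $\nu (\rA |_{\bar \rM _k}) \subseteq T \bar \rM _k$) and hence are $O(\rho _k)$ by smoothness, then conclude by a finite cover of the compact set $\bar \rM _k$. The only point to tighten is that the first-order vanishing estimate must be applied to the smooth normal components $\nu _{i j}$ of the anchor rather than to $\eta _k = d \rho _k \circ \nu$ itself (which is merely Lipschitz-bounded through the factor $d \rho _k$), exactly as your coordinate version at the end, and as the paper, in fact does.
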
 
\begin{proof} 
Let $x _0 \in \bar \rM _k $ be arbitrary.
Fix a coordinate chart $(\bar U , \bar \bx )$ around $x$.
We may assume that $ T \bar \rM _k ^\bot \subset T \rM $ is trivial on $\bar U$.
Then the map 
$$(x _1 \cdots x _n ) \mapsto \exp _{\bar \bx (x _1 , \cdots x _m )} (x_{m+1} , \cdots , x _n ) ,$$
where $\exp $ is the exponential map defined by the Riemannian metric $\bar g $,
defines a set of local coordinates on some open subset $U \subset \rM$.
Moreover, by definition of the exponential map, one has
$$ d \rho _k (\partial _i ) = 0 , \quad \Forall i \leq m .$$  

On the other hand, 
we may assume that $\rA $ is trivial on $U$ and fix any orthonormal basic sections
$E _1 , \cdots , E _n \in \Gamma ^\infty (\rA )$ and write
\begin{equation}
\nu (E _j ) = \sum_{i = 1 } ^n \nu _{i j } \partial _i
\end{equation}
for some smooth functions $ \nu _{i j}$.
Compositing with $ d \rho _k$, one gets 
$$ d \rho _k \circ \nu (E _j ) = \sum_{i = m+1} ^n \nu _{i j } (\partial _i \rho _k ).$$
Since the image of $\rA |_{\bar M _k }$ under $\nu $ lies in $T \bar \rM _k $, it follows that
$$ \nu _{i j} ( x ) = 0 , \quad \Forall i > m \text{ and } x \in \bar U \subset \bar M _k.$$
The smoothness of $\nu _{i j} $ implies there exists an open subset $U' \subset U$ containing $x _0 $,
and constant $\omega _{k , U}$ such that
$$ | \nu _{i j } (x)| \leq \omega _{k , U} \rho _ k (x) , \quad \Forall x \in U'.$$
Since $x _0 $ is arbitrary, the lemma follows by considering a suitable finite cover of $\bar \rM _k$. 
\end{proof}

\begin{rem}
\label{ModNote}
Given $\omega _k $ as in Lemma \ref{LocalDegen}, 
we may modifying $\rho _k $ outside a neighborhood of $\rM _k $ to get
$$ | d \rho _k \circ \nu (X) | \leq \omega _k \rho _k (x) | X |_{g _\rA}  $$
Since we shall only be interested in estimates up to some multiples, 
it is clear that such modification have no effect on the arguments.
Therefore we shall often implicitly assume such modification is being made if necessary.  
\end{rem}

\begin{thm} 
\label{DEst}
For each $k$, let $\omega _k$ be defined in the previous Lemma \ref{LocalDegen}. 
Suppose further that $ | d \rho _k \circ \nu (X) | \leq \omega _k \rho _k (x) | X |_{g _\rA} $ for any $X \in \rA $.
Then for any $x \in \rM , a, b \in \cG _x $, 
\begin{equation}
\omega _k d (a, b ) \geq \left| \log \left( \frac{\rho _k (\bt (b))}{\rho _k (\bt (a))} \right) \right| .
\end{equation}
\end{thm}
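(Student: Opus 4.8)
The plan is to reduce the inequality to an estimate along a fixed path and then apply Lemma \ref{LocalDegen} together with Gronwall-type reasoning. First I would fix $x \in \rM$ and $a, b \in \cG _x$, and join them by a smooth curve $\gamma : [0,1] \to \cG _x$ with $\gamma(0) = a$, $\gamma(1) = b$, whose $g _\rA$-length is arbitrarily close to $d(a,b)$ (this is legitimate since $g _\rA$ induces the Riemannian metric on $\cG _x$ by right invariance, and $\cG _x$ is connected by our standing assumption). Then I would consider the function $t \mapsto \rho _k (\bt (\gamma(t)))$. The key point is that the derivative of $\bt \circ \gamma$ is controlled by $\nu$ applied to the $\rA$-valued "logarithmic derivative" of $\gamma$: more precisely, if $\dot\gamma(t) \in T _{\gamma(t)} \cG _x$, then writing $\dot\gamma(t) = (d R _{\gamma(t)}) X(t)$ for the corresponding $X(t) \in \rA _{\bt(\gamma(t))}$ (right translation to the target), right-invariance of the metric gives $|X(t)| _{g _\rA} = |\dot\gamma(t)|$, and $d \bt (\dot\gamma(t)) = \nu(X(t))$ since $\bt$ intertwines right translation with the identity on the base and differentiates the anchor.

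With this in hand, the chain rule gives
\begin{equation}
\frac{d}{dt} \rho _k (\bt(\gamma(t))) = d\rho _k \big( d\bt (\dot\gamma(t)) \big) = d\rho _k \circ \nu (X(t)),
\end{equation}
and the hypothesis $|d\rho _k \circ \nu(X)| \leq \omega _k \rho _k(x) |X| _{g _\rA}$ (valid for all $X \in \rA$, as assumed in the statement after the modification of Remark \ref{ModNote}) yields
\begin{equation}
\left| \frac{d}{dt} \rho _k (\bt(\gamma(t))) \right| \leq \omega _k \, \rho _k(\bt(\gamma(t))) \, |\dot\gamma(t)|.
\end{equation}
If $\rho _k (\bt(\gamma(t)))$ never vanishes along the curve, we may divide and recognize the left side as $\big| \frac{d}{dt} \log \rho _k(\bt(\gamma(t))) \big|$; integrating from $0$ to $1$ and using that $\int_0^1 |\dot\gamma(t)|\,dt$ is the length of $\gamma$, which can be taken arbitrarily close to $d(a,b)$, gives
\begin{equation}
\left| \log \rho _k(\bt(b)) - \log \rho _k(\bt(a)) \right| \leq \omega _k \, d(a,b),
\end{equation}
which is exactly the claimed inequality.

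The main obstacle is the possibility that $\rho _k \circ \bt \circ \gamma$ vanishes somewhere, since then $\log$ is undefined and the division step fails; this happens precisely when the curve passes through (the $\bs$-fiber over) the singular set $\bar\rM _k$. I would handle this by first treating the generic case: if $\bt(a), \bt(b) \in \rM \setminus \bar\rM _k$, one can argue that $\bt(\gamma(t))$ stays in $\rM \setminus \bar\rM _k$ — indeed the differential inequality above shows $\rho _k(\bt(\gamma(t)))$ satisfies $\rho _k(\bt(\gamma(t))) \geq \rho _k(\bt(a)) e^{-\omega _k t |\dot\gamma|_\infty \cdots}$-type lower bounds by Gronwall, hence cannot reach zero in finite time, so the curve never hits $\bar\rM _k$ and the computation goes through verbatim. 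The boundary cases where $\bt(a)$ or $\bt(b)$ lies in $\bar\rM _k$ (so $\rho _k = 0$ there) then follow by a limiting argument, approximating by interior points, or are interpreted with the convention $\log 0 = -\infty$ so that the inequality holds trivially (both sides being $+\infty$ when exactly one endpoint is on $\bar\rM _k$, and requiring $d(a,b)$ to be correspondingly unbounded — which is consistent since $\bar\rM _k$ lies at infinite distance in the fiber metric). A minor additional point to check is that the estimate of Lemma \ref{LocalDegen} is only established near $\bar\rM _k$, but away from $\bar\rM _k$ one uses the modified $\rho _k$ of Remark \ref{ModNote} for which the bound holds globally on $\rA$, as the theorem's hypothesis explicitly assumes.
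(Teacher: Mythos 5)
Your proposal is correct and follows essentially the same route as the paper: join $a$ and $b$ by a (near-)minimizing curve in $\cG_x$, right-translate its velocity to an $\rA$-valued curve $X(t)$ with $\nu(X(t)) = \frac{d}{dt}\bt(\gamma(t))$ and $|X(t)|_{g_\rA} = |\dot\gamma(t)|$, apply the pointwise bound, and integrate the logarithmic derivative of $\rho_k \circ \bt \circ \gamma$. Your additional discussion of the degenerate case where $\rho_k$ vanishes along the curve is a point the paper's proof leaves implicit, and your Gronwall-type resolution of it is sound.
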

\begin{proof} 
Given any $a, b \in \cG _x $, since $\cG _x$ is a complete, connected Riemannian manifold, 
there exists a minimizing geodesic $\gamma : [0,1] \to \cG _x $ connecting $a$ and $b$.
Define the curves $\gamma _\rA (t) := d R _{(\gamma (t)) ^{-1}} (\gamma ' (t)) \in \rA$,
and $\gamma _\rM (t) := \bt \circ \gamma (t) \in \rM$.
Then one has the relation
$$ \nu (\gamma _\rA (t)) = \gamma '_\rM (t) .$$

Moreover, by right invariance, it follows that
$$ d (a , b ) = \text {length of $\gamma $} = \int _0 ^1 \vert \gamma _\rA (t) \vert _{g _\rA } d t .$$
Applying Lemma \ref{LocalDegen}, one gets
\begin{align*}
\omega _k d (a, b) &=  \int _0 ^1 C \vert \gamma _\rA (t) \vert _{g _\rA } d t \\
& \geq \int _0 ^1 \frac {\vert d \rho _k (\nu (\gamma _\rA (t))) \vert }{\rho _k (\gamma _\rM (t))} d t 
\geq \left| \int _0 ^1 \frac {d \rho _k ( \gamma '_\rM (t)) }{\rho _k (\gamma _\rM (t))} d t \right| 
= \left| \log \left( \frac{\rho _k (\bt (b))}{\rho _k (\bt (a))} \right) \right| .
\end{align*}
\end{proof}

\begin{rem}
Here, we observe that on can instead take any non-negative functions $\tilde \rho _k \in C ^0 (\rM )$, such that
\begin{enumerate}
\item
$\tilde \rho _k = 0 $ on $\bar \rM _k$, $\tilde \rho _k $smooth and positive on $\rM \setminus \rM _k $;
\item
Theorem \ref{DEst} holds for $\tilde \rho _k $ for some $\omega _k > 0$;
\item
One has $M \rho _k ^\lambda \leq \tilde \rho _k \leq M' \rho _k {\lambda '} $ 
for some $M, M' , \lambda , \lambda ' >0 $,
\end{enumerate}
and all the subsequent arguments remain true. 
At this point it is unclear if there is an `optimal' choice for the defining functions $\rho _k$.
\end{rem}

Inspired by Lemma \ref{LocalDegen} and Theorem \ref{DEst}, we define:
\begin{dfn}
The groupoid $\cG$ is said to be uniformly non-degenerate if there exist constants 
$\omega'_1 , \omega ' _2 , \cdots , \omega ' _r > 0 $ such that 
$$ |\nu (X) | \geq \omega ' _k \rho _k (x) | X |_{g _\rA} ,
\quad \forall \: x \in \bar \rM _{k - 1}, X \in A _x , X \bot \bar \rA _k ;$$
The groupoid $\cG$ is said to be uniformly degenerate if there exist constants 
$\omega _1 , \omega_2, \cdots , \\ \omega _r , \omega '_1 , \cdots \omega '_r > 0 $
and exponents $\lambda _1 , \cdots \lambda _r , \lambda' _1 , \cdots , \lambda ' _r \geq 2$ such that 
\begin{align*}
| d \rho _k \circ \nu (X) | & \leq \omega _k (\rho _k (x))^{\lambda _k} | X |_{g _\rA},
\quad \forall \: x \in \bar \rM _{k - 1}, X \in A _x , \text { and } \\
|\nu (X) | & \geq \omega ' _k (\rho _k (x))^{\lambda ' _k } | X |_{g _\rA} ,
\quad \forall \: x \in \bar \rM _{k - 1}, X \in A _x , X \bot \bar \rA _k .
\end{align*}
\end{dfn}
%\begin{dfn}
%The groupoid $\cG$ is said to be uniformly non-degenerate if there exist constants 
%$\omega' _1 , \omega' _2, \cdots , \omega' _r > 0 $
%such that 
%$$ | d \rho _k \circ \nu (X) | \geq \omega ' _k \rho _k (x) | X |_{g _\rA} ,$$
%for any $ x \in \rM , X \in A _x ;$
%The groupoid $\cG$ is said to be uniformly degenerate if there exist constants 
%$\omega _1 , \omega_2, \cdots , \omega _r > 0 $
%and exponent $\lambda \geq 2$ such that 
%$$  | d \rho _k \circ \nu (X) | \leq \omega _k (\rho _k (x))^\lambda | X |_{g _\rA} ,$$
%for any $ x \in \rM , X \in A _x .$
%\end{dfn}

\begin{rem}
The groupoid $\cG$ is uniformly degenerate if and only if
$$ d \nu _{i j} ( x ) = 0 , \quad \Forall i \leq m \text{ and } x \in \bar U \supset \bar M _k,$$
where $\nu _{i j} $ is defined in Equation (2).
\end{rem}

\begin{rem}
If $\cG $ is uniformly non-degenerate, it is necessary that $\dim M _k = \dim M - k $.
\end{rem}

Applying the same arguments as in Lemma \ref{LocalDegen} and Theorem \ref{DEst}, it is obvious that:
\begin{cor}
If $\cG $ is totally degenerate, then for any $\omega_k > 0 $, 
there exists a function $\rho ' _k $ such that $\rho ' _k = \rho _k $ on some open neighborhood of $\rM _k $
(which depends on $\omega _k $), 
and satisfies
$$ \omega _k d (a, b ) \geq \left| \log \left( \frac{\rho ' _k (\bt (b))}{\rho ' _k (\bt (a))} \right) \right| ,$$
for all $a ,b \in \cG$ such that $\bs (a) = \bs (b)$.
\end{cor}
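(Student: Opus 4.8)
The plan is to bootstrap the Corollary from Theorem \ref{DEst} by replacing the globally-valid inequality hypothesis with a merely locally-valid one, at the cost of shrinking the neighborhood on which $\rho'_k$ agrees with $\rho_k$. First I would observe that when $\cG$ is totally degenerate we have, for each $k$ and each prescribed $\omega_k>0$, a bound of the form $|d\rho_k\circ\nu(X)|\le \omega_k\,\rho_k(x)\,|X|_{g_\rA}$ holding on some open neighborhood $V_k$ of $\bar\rM_k$ (which depends on $\omega_k$): indeed, by the total-degeneracy condition $d\nu_{ij}(x)=0$ on $\bar\rM_k$ in the coordinates of Lemma \ref{LocalDegen}, the functions $\nu_{ij}$ vanish to second order at $\bar\rM_k$, so $|\nu_{ij}(x)|\le C\,\rho_k(x)^2\le (\omega_k/\text{const})\,\rho_k(x)$ once $\rho_k(x)$ is small enough; a finite cover of $\bar\rM_k$ then yields such a $V_k$. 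This is exactly Remark \ref{ModNote} but with the improvement that the multiplicative constant can be taken as small as we like, at the price of localizing.

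Next I would modify $\rho_k$ away from $\rM_k$ to globalize the inequality with the same constant $\omega_k$. Concretely, pick a bump function $\chi$ equal to $1$ on a smaller neighborhood $V'_k$ of $\bar\rM_k$ with $\overline{V'_k}\subset V_k$, and set $\rho'_k:=\chi\rho_k+(1-\chi)c$ for a suitable constant (or more carefully, interpolate $\rho_k$ on $V_k$ with a constant outside, keeping $\rho'_k>0$ on $\rM\setminus\bar\rM_k$ and $\rho'_k$ smooth). On $V'_k$ we have $\rho'_k=\rho_k$ and the local estimate applies; on $\rM\setminus V'_k$ one has $\rho'_k$ bounded below by a positive constant while $d\rho'_k\circ\nu$ is bounded (by compactness of $\rM$ and uniform support considerations), so after possibly rescaling $\chi$ one still gets $|d\rho'_k\circ\nu(X)|\le \omega_k\,\rho'_k(x)\,|X|_{g_\rA}$ globally on $\rM$. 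This is the step that needs a little care: one must ensure the transition region does not destroy the inequality, which is why the freedom to choose $\omega_k$ small on $V_k$ (and hence to make the interpolation gentle) is essential.

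Once $\rho'_k$ satisfies the global bound $|d\rho'_k\circ\nu(X)|\le\omega_k\,\rho'_k(x)\,|X|_{g_\rA}$ for all $X\in\rA$, Theorem \ref{DEst} applies verbatim with $\rho_k$ replaced by $\rho'_k$ and gives, for all $a,b\in\cG_x$,
\begin{equation}
\omega_k\,d(a,b)\ \ge\ \left|\log\!\left(\frac{\rho'_k(\bt(b))}{\rho'_k(\bt(a))}\right)\right|,
\end{equation}
which is the claimed inequality. Since $a,b$ range over $\cG_x=\bs^{-1}(x)$ as $x$ ranges over $\rM$, this is precisely the statement for all $a,b\in\cG$ with $\bs(a)=\bs(b)$.

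The main obstacle is the globalization step: the cut-off $\chi$ must be chosen so that on the transition region the ratio $|d\rho'_k\circ\nu(X)|/(\rho'_k(x)|X|_{g_\rA})$ stays below $\omega_k$. The resolution is that this ratio involves $d\chi$, which can be made small by spreading the transition over a large region inside $V_k$, where $\rho_k$ is already bounded below; combined with the small local constant, the product stays under control. Everything else is a direct transcription of Lemma \ref{LocalDegen} and Theorem \ref{DEst}, as the statement itself advertises ("applying the same arguments").
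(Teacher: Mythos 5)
Your argument is correct and is essentially the paper's intended route: the paper offers no written proof beyond ``applying the same arguments as in Lemma \ref{LocalDegen} and Theorem \ref{DEst}'', and your steps --- use the degeneracy exponent $\lambda_k \ge 2$ to obtain $|d\rho_k\circ\nu(X)|\le \omega_k\,\rho_k(x)\,|X|_{g_\rA}$ on a small neighbourhood of $\bar\rM_k$, modify $\rho_k$ outside that neighbourhood as in Remark \ref{ModNote}, and then invoke Theorem \ref{DEst} --- are exactly those arguments. (For the globalization step the cleanest choice is $\rho'_k=f(\rho_k)$ with $f(t)=t$ near $0$, $f$ constant for large $t$, and $0\le f'\le 1$: then $d\rho'_k\circ\nu=f'(\rho_k)\,d\rho_k\circ\nu$ automatically inherits the degeneracy factor, so no separate smallness estimate on the cutoff's differential is needed.)
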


\subsection{Construction of the calculus with bounds}
Given a Hausdorff groupoid $\cG$, 
defined the manifold $ \tilde \cG := \{ (a , b ) \in \cG \times \cG : \bs (a ) = \bs (b) \} $.
Let $\tilde \bm $ denote the map 
$$(a , b ) \mapsto a b ^{-1}, \quad (a, b ) \in \tilde \cG .$$ 

Also recall that for any $X \in \Gamma ^\infty (\rA )$,
$X$ determines a right invariant vector field $X ^\br \in \Gamma ^\infty (\Ker (d \bs ))$.
Moreover, given vector fields on $X ^\br , Y ^\br $ on $\cG $, 
$$ X ^\br (a) \oplus Y ^\br (b) \in T _{(a, b)} \tilde \cG \subset T _{(a , b)} (\cG \times \cG ) $$
for any $( a , b ) \in \tilde \cG $.
We shall consider $X ^\br \oplus Y ^\br $ as a vector field on $\tilde \cG$.

Consider $d \tilde \bm (X ^\br \oplus Y ^\br )$.
Observe that $X ^\br $ is just the vector field
$$ X ^\br (a) = \partial _t |_{t = 0 } \exp t X (\bt (a)) (a) , \quad \Forall a \in \cG .$$ 
It follows that for any $X , Y \in \Gamma ^\infty (\rA ), ( a , b ) \in \tilde \cG $, 
\begin{align} 
\label{ReducedVF}
d \tilde \bm (X ^\br \oplus Y ^\br ) (a b ^{-1}) 
&= \partial _t |_{t = 0} ( \exp t X (\bt (a))) a b ^{-1} (\exp t Y (\bt (b)))^{-1} \\ \nonumber
&= \partial _t |_{t = 0} ( \exp t X (\bt (c))) c (\exp t Y (\bt (c)))^{-1} ,
\end{align}
where $c := a b ^{-1}$.
In particular, $d \tilde \bm (X ^\br \oplus Y ^\br ) $ is a well defined vector field on $\cG $.

Recall how the $\bs$-fiberwise covariant derivatives of a function is defined.
Let $\nabla $ be the Levi-Civita $\rA $-connection with respect to the given Riemannian metric $g _\rA$. 
By right invariance, $\nabla $ defines the Levi-Civita connection on each $\bs$-fiber $\cG _x$,
which we still denote by $\nabla$.
For any smooth functions $\psi $ on $\cG$, 
$\nabla ^l \psi \in \Gamma ^\infty (\otimes ^l \ker (d \bs )) , l = 1, 2, \cdots ,$ is defined inductively by
\begin{align}
\label{HigherD}
\nabla \psi (X _1 ^\br ) := & L _{X _1 ^\br } \psi \\ \nonumber
\nabla ^l \psi (X _1 ^\br , X _2 ^\br , \cdots , X _l ^\br ) 
:= & L _{X _1 ^\br } (\nabla ^{l - 1} \psi (X _2 ^\br , \cdots , X _l ^\br )) \\ \nonumber
&- \sum _{k = 2} ^l \nabla ^{l - 1} \psi (X _2 ^\br , \cdots , (\nabla _{X _1} X _k )^\br , \cdots X _l ^\br ).
\end{align}
Likewise, on $\tilde \cG $, let $\tilde \nabla $ be the Cartesian product connection.
One considers higher covariant derivatives $\tilde \nabla ^l $.
In particular, observe that for any $\psi \in C ^\infty (\cG )$, 
\begin{align}
\tilde \nabla (\tilde \bm ^* \psi ) (V _1 ^\br \oplus W _1 ^\br ) (a, b) 
=& L _{d \tilde \bm (V _1 ^\br \oplus W _1 ^\br )} (a b ^{-1}) \\ \nonumber
\tilde \nabla ^2 (\tilde \bm ^* \psi ) ( V _1 ^\br \oplus W _1 ^\br , V _2 ^\br \oplus W _2 ^\br ) (a , b) 
=& L _{ d \tilde \bm ( V _1 ^\br \oplus W _1 ^\br )} L _{ d \tilde \bm ( V _2 ^\br \oplus W _2 ^\br )} 
\psi (a b ^{-1}) \\ \nonumber
&- L _{ d \tilde \bm ( \nabla _{V _1} V _2 ^\br \oplus \nabla _ {W _1} W _2 ^\br )} \psi (a b ^{-1}),
\end{align}
and so on.
 
For each $( a , b ) \in \tilde \cG $, 
define $d (a , b) $ to be the Riemannian distance on $\cG _{\bs (a)} = \cG _{\bs (b)}$ between $a$ and $b$.

\begin{dfn}
For each $\varepsilon > 0 $, 
define the {\it exponentially decaying calculus of order $\varepsilon $} to be the space of kernels
\begin{align*}
\Psi ^{- \infty } _{\varepsilon ; \bzero } (\cG) :=
\Big\{ \psi \in C ^0 ( \cG ) : \psi |_{\cG _k } \in C ^\infty ( \cG _k) &,
\Forall l \in \bbN, \exists M _l > 0 \text { such that } \\
\text{ for any } X _1 , \cdots , X _l , Y _1 &, \cdots Y _l \in \Gamma ^\infty (\rA ), (a , b) \in \tilde \cG , \\
\tilde \nabla ^l (\bm ^* \psi ) (X _1 ^\br \oplus Y _1 ^\br & , 
\cdots , X _l ^\br \oplus Y _l ^\br  )(a , b) \\
\leq M _l e ^{- \varepsilon ' d (a , b)} & (|X_1 | + |Y_1|)(|X_2 | + |Y_2|) \cdots (|X_l | + |Y_l|) \Big\}.
\end{align*}
\end{dfn}

\begin{rem}
For simplicity we only consider the scalar case. 
A groupoid pseudo-differential operators on a vector bundle $\mathrm E \to \rM $ can be identified with a 
(distributional) section on $\bt ^{-1} \mathrm E \otimes \bs ^{-1} \mathrm E \to \cG$.
One instead considers covariant derivative on $\mathrm E$ and it is clear that all arguments below follows.
\end{rem}

Suppose that $\cG$ is a groupoid of polynomial growth. 
Then for any $\varepsilon _1 , \varepsilon _2 \geq 0 $,
$\psi _1 \in \Psi ^{- \infty } _{\varepsilon _1 ; \bzero } (\cG ), 
\psi_2 \in \Psi ^{- \infty } _{\varepsilon _2 ; \bzero} (\cG )$,
the convolution product $ \psi _1 \circ \psi _2 $ is well defined.
Moreover, as in the case of manifolds with boundary \cite{Mazzeo;EdgeRev, Melrose;Book}, 
one has
\begin{lem}
\label{CompoLem}
Let $\cG$ be a groupoid of polynomial growth.
For any $\varepsilon _1 , \varepsilon _2 \geq 0 $
$$ \Psi ^{- \infty } _{\varepsilon _1 ; \bzero } \circ \Psi ^{- \infty } _{\varepsilon _2 ; \bzero} 
\subseteq \Psi ^{- \infty } _{\min \{ \varepsilon _1 , \varepsilon _2 \}}.$$ 
\end{lem}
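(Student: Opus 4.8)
The plan is to reduce the whole statement --- well-definedness of the integral, continuity on $\cG$, smoothness on each $\cG_k$, and the covariant-derivative bounds of every order --- to one pointwise estimate, applied to $\psi_1$, $\psi_2$ and to all of their fibrewise covariant derivatives. The first step is to recast the convolution in a form suited to the distance function in the definition. Since the fibrewise metrics are right invariant, the distance $d(a,b)$ on $\cG_{\bs(a)}$ depends only on $ab^{-1}$, so writing $|g|:=d\big(g,\mathbf u(\bs(g))\big)$ one has $d(a,b)=|ab^{-1}|$ for $(a,b)\in\tilde\cG$, and in
\begin{equation*}
\psi_1\circ\psi_2(c)=\int_{\cG_{\bs(c)}}\psi_1(cb^{-1})\,\psi_2(b)\,\mu_{\bs(c)}(b)
\end{equation*}
the factor $\psi_1(cb^{-1})$ is bounded by a multiple of $e^{-\varepsilon_1 d(c,b)}$ (distance in $\cG_{\bs(c)}$) and $\psi_2(b)$ by a multiple of $e^{-\varepsilon_2|b|}=e^{-\varepsilon_2 d(b,\mathbf u(\bs(c)))}$. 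So the order-$0$ bound comes down to controlling $\int_{\cG_{\bs(c)}}e^{-\varepsilon_1 d(c,b)-\varepsilon_2 d(b,\mathbf u(\bs(c)))}\mu_{\bs(c)}(b)$.

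For this key estimate, assume without loss $\varepsilon_2\le\varepsilon_1$, so $\min\{\varepsilon_1,\varepsilon_2\}=\varepsilon_2$, and use the triangle inequality $d(c,b)+d(b,\mathbf u(\bs(c)))\ge d(c,\mathbf u(\bs(c)))=|c|$ to write
\begin{equation*}
\varepsilon_1 d(c,b)+\varepsilon_2 d(b,\mathbf u(\bs(c)))\ \ge\ \varepsilon_2|c|+(\varepsilon_1-\varepsilon_2)\,d(c,b).
\end{equation*}
The integral is then at most $e^{-\varepsilon_2|c|}\int_{\cG_{\bs(c)}}e^{-(\varepsilon_1-\varepsilon_2)d(c,b)}\mu_{\bs(c)}(b)$, and right translation $b\mapsto bc$ (an isometry preserving $\mu$) identifies the last integral with $\int_{\cG_{\bt(c)}}e^{-(\varepsilon_1-\varepsilon_2)|b'|}\mu_{\bt(c)}(b')$, which by polynomial volume growth is finite and --- since $\rM$ is compact and the growth constants are uniform --- bounded independently of $\bt(c)$, provided $\varepsilon_1>\varepsilon_2$. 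This gives finiteness of the convolution and the order-$0$ bound with decay rate $\min\{\varepsilon_1,\varepsilon_2\}$ whenever $\varepsilon_1\ne\varepsilon_2$; when $\varepsilon_1=\varepsilon_2$ the same computation (or a split of $\cG_{\bs(c)}$ along $B(\mathbf u(\bs(c)),|c|/2)$) produces the bound $C(1+|c|)^N e^{-\varepsilon_2|c|}$, the polynomial factor being absorbed into the exponential by an infinitesimal decrease of the decay exponent. Continuity of $\psi_1\circ\psi_2$ on $\cG$ and smoothness of its restriction to each $\cG_k$ then follow by differentiating under the integral sign, which is legitimate because $\bs^{-1}(\rM_k)=\cG_k$, so the fibre of integration stays inside the stratum where $\psi_1,\psi_2$ are smooth, and the uniform exponential bounds furnish the dominating functions --- exactly as in the $b$-calculus \cite{Mazzeo;EdgeRev,Melrose;Book}.

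It remains to reach the higher covariant-derivative bounds, and here the mechanism is that fibrewise differentiation distributes over convolution. For $X\in\Gamma^\infty(\rA)$ one has $L_{X^\br}(\psi_1\circ\psi_2)=(L_{X^\br}\psi_1)\circ\psi_2$, while the complementary family of fibrewise directions --- the infinitesimal right translations, which are the ones appearing in the $Y$-slot of $X^\br\oplus Y^\br$ under $d\tilde\bm$, cf.\ (\ref{ReducedVF}) --- pass through the convolution to the second factor. These two operations act on different factors, hence commute, so expanding $\tilde\nabla^l\big(\bm^*(\psi_1\circ\psi_2)\big)(X_1^\br\oplus Y_1^\br,\dots,X_l^\br\oplus Y_l^\br)$ through the recursion (\ref{HigherD}) and the formulae following it yields a finite sum of terms of the form $(D_1\psi_1)\circ(D_2\psi_2)$, where $D_1$ is a fibrewise operator of order $\le l$ built from the $X_i^\br$ together with Christoffel corrections $\nabla_{X_i}X_j$, and $D_2$ likewise from the $Y_i$; the Christoffel symbols are bounded because each $\cG_x$ has bounded geometry (Appendix A). By hypothesis $D_1\psi_1$ and $D_2\psi_2$ satisfy order-$0$ exponential bounds with rates $\varepsilon_1,\varepsilon_2$ and constants depending only on $l$, so the key estimate applies verbatim to each term and gives the level-$l$ bound with decay rate $\min\{\varepsilon_1,\varepsilon_2\}$; this is precisely membership in $\Psi^{-\infty}_{\min\{\varepsilon_1,\varepsilon_2\};\bzero}(\cG)$.

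The genuine content sits in the key estimate: trading part of the exponential decay against the polynomial volume growth via the triangle inequality, and making every constant uniform in the base point --- which is where compactness of $\rM$ and the uniformity of the growth constants enter. The rest is bookkeeping: the Leibniz-type expansion of covariant derivatives of a convolution, the boundedness of Christoffel symbols from bounded geometry, and the interchange of differentiation and integration across the strata, all routine once the uniform estimates are in hand. The only slightly delicate point is the borderline case $\varepsilon_1=\varepsilon_2$, where one settles for the rate $\min\{\varepsilon_1,\varepsilon_2\}$ up to the harmless polynomial factor noted above.
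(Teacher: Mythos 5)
Your proof is correct and follows essentially the same route as the paper: a Schur-type pointwise estimate obtained from the triangle inequality on the $\bs$-fiber together with the uniform polynomial volume growth, and then the observation that the left and right fiberwise derivatives in $d\tilde\bm(X^\br\oplus Y^\br)$ distribute onto $\psi_1$ and $\psi_2$ respectively, reducing all higher-order bounds to the order-zero case. The only cosmetic differences are that the paper splits the fiber into $\{d(b,\bs(b))<d(a,\bs(a))\}$ and its complement using the reverse triangle inequality rather than your convexity split of the exponents, and it disposes of the borderline case $\varepsilon_1=\varepsilon_2$ via the strict slack $\varepsilon'_i>\varepsilon_i$ built into the definition instead of absorbing a polynomial factor.
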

\begin{proof}
For simplicity we only consider the scalar case.
It suffices to consider the convolution product
$ \psi _1 \circ \psi _2 $
for any $\psi _1 \in \Psi ^{- \infty } _{\varepsilon _1 ; \bzero } (\cG ), 
\psi_2 \in \Psi ^{- \infty } _{\varepsilon _2 ; \bzero} (\cG )$.
In view of the formula
$$ \psi _1 \circ \psi _2 (a) = \int _{b \in \cG _{\bs (a)}} \psi _1 (a b ^{-1}) \psi _2 (b) \mu _{\bs (a)} (b)
= \int _{c \in \bs ^{-1} (\bt (a))} \psi _1 (c ^{-1}) \psi _2 (c a) \mu _{\bt (a)} (c),$$
one can without loss of generality assume $\varepsilon _1 \leq \varepsilon _2$.
Then by definition one has the estimates 
$ \psi _1 (a) \leq M e ^{- \varepsilon ' _1 d (a, \bs (a))} ,
\psi _2 (a) \leq M' e ^{- \varepsilon ' _2 d (a, \bs (a))} $
for some $\varepsilon '_1 > \varepsilon _1 , \varepsilon ' _2 > \varepsilon _2 $ and constants $M , M' > 0$.
One may further assume that $\varepsilon ' _1 < \varepsilon ' _2 $.

The hypothesis implies for any $a \in \cG$
\begin{align*}
| \psi _1 \circ \psi _2 (a) |
\leq & M M'\int _{b \in \cG _{\bs (a)}} 
e ^{- \varepsilon ' _1 d (a , b) } e ^{ - \varepsilon ' _2 d (b , \bs (b))} \mu _{\bs (a)} (b) \\
\leq & M M' \int _{b \in \cG _{\bs (a)}} 
e ^{- \varepsilon ' _1  |d (a , \bs (a)) - d (b , \bs (b))| - \varepsilon ' _2 d (b , \bs (b))} \mu _{\bs (a)} (b) \\
= & M M' \int _{b \in B _a } 
e ^{- \varepsilon ' _1  d (a , \bs (a)) } e ^{- (\varepsilon ' _2 - \varepsilon ' _1) d (b , \bs (b))} 
\mu _{\bs (a)} (b) \\ 
&+ M M' \int _{b \not \in B _a} 
e ^{ \varepsilon ' _1 d (a , \bs (a)) } e ^{- (\varepsilon ' _2 + \varepsilon ' _1) d (b , \bs (b))} \mu _{\bs (a)} (b),
\end{align*}
where $B _a$ denotes the set $\{ b \in \cG _{\bs (a)} : d (b , \bs (b)) < d (a , \bs (a)) \} $
for each $a$.
Hence for the first integral, one has
\begin{align*}
\int _{b \in B _a } 
e ^{- \varepsilon ' _1  d (a , \bs (a)) } & e ^{- (\varepsilon ' _2 - \varepsilon ' _1) d (b , \bs (b))} 
\mu _{\bs (a)} (b) \\
%= & \: e ^{- \varepsilon ' _1  d (a , \bs (a)) } 
%\int _{b \in B _a } e ^{- (\varepsilon ' _2 - \varepsilon ' _1) d (b , \bs (b))} \mu _{\bs (a)} (b),
\leq & \: e ^{- \varepsilon ' _1  d (a , \bs (a)) } 
\int _{b \in \cG _{\bs (a)}} e ^{- (\varepsilon ' _2 - \varepsilon ' _1) d (b , \bs (b))} \mu _{\bs (a)} (b),
\end{align*}
which is finite and only depends on $\bs (a)$ (since $\cG$ is only of polynomial growth).
As for the second integral, write
\begin{align*}
\varepsilon ' _1  d (a , \bs (a)) -  &(\varepsilon ' _2 + \varepsilon ' _1) d (b , \bs (b)) \\
= &- \varepsilon ' _1  d (a , \bs (a)) 
+ 2 \varepsilon ' _1 ( d (a , \bs (a)) - d (b, \bs (b)))
- (\varepsilon ' _2 - \varepsilon ' _1) d (b , \bs (b)).
\end{align*}
Since $d (b, \bs (b)) \geq d (a , \bs (a)) $ for any $b \not \in B _a$.
It follows that the second integral is again bounded by
$$ e ^{- \varepsilon ' _1 d (a , \bs (a))}
\int _{b \in \cG _{\bs (a)}} e ^{- (\varepsilon ' _2 - \varepsilon ' _1) d (b , \bs (b))} \mu _{\bs (a)} (b).$$
Adding the two together and rearranging, one gets
$ e ^{\varepsilon ' _1 d _\bs (a)} (u _1 \circ u _2 ) (a) $ is a bounded function, as asserted.

To prove the assertion for derivatives,
observe that by right invariance of $\mu$,
$$ (\psi _1 \circ \psi _2 )(a b^{-1}) = \int \psi _1 (a c ^{-1} ) \psi _2 (c b ^{-1}) \mu _{\bs (a)} (c) ,$$
for any $(a, b) \in \tilde \cG $.
It follows that for any $(a , b ) \in \tilde \cG $, $X , Y \in \Gamma ^\infty (\rA)$,
$$ L _{d \tilde \bm (X ^\br \oplus Y ^\br )} (\psi _1 \circ \psi _2 )(a b ^{-1}) 
= \int ( L _{d \tilde \bm (X ^\br \oplus 0 )} \psi _1 )(a c ^{-1} ) 
( L _{d \tilde \bm (0 \oplus Y ^\br )} \psi _2 )(c b ^{-1}) \mu _{\bs (a)} (c). $$
and so on for higher derivatives.
\end{proof}

Next, we write down the definition of the calculus with bounds.
For each $k$, denote 
$$ \hat \rho _k := ( (\bs ^* \rho _k )^2 + (\bt ^* \rho _k )^2 )^{\frac{1}{2}} \in C ^0 (\cG).$$
\begin{dfn}
\label{FFDegen}
Let $\cG$ be uniformly degenerate. For each 
$\varepsilon > 0, \lambda _1 , \cdots , \lambda _r \geq 0 $,
the calculus with bounds of order  $- \infty $ is defined to be 
\begin{align*}
\Psi ^{- \infty } _{\varepsilon _1 ; \lambda _1 , \cdots , \lambda _r } (\cG) 
:= \Big\{ \psi \in C ^0 ( \cG ) : \psi |_{\cG _k } \in C ^\infty &( \cG _k),
\Forall l \in \bbN, \exists M _l > 0 \text { such that } \\
\text{ for any } X _1 , \cdots , X _l &, Y _1 , \cdots Y _l \in \Gamma ^\infty (\rA ), (a , b) \in \tilde \cG \\
\tilde \nabla ^l (\tilde \bm ^* \psi ) (X _l ^\br & \oplus Y _l ^\br , \cdots , X _1 ^\br \oplus Y ^\br _1 )(a , b ) \\
\leq M _l & e ^{- \varepsilon ' d (a , b) }
\prod _{i = 1 }^l (|X_i | + |Y_i|) \prod_{i=1} ^l \hat \rho _i ^{\lambda _i } \Big\}.
\end{align*}
\end{dfn}

\begin{rem}
Loosely speaking, 
$e ^{- d (a , \bs (a)) } $ in our calculus with bounds plays the role of left and right boundary in \cite{Melrose;Book};
while $\hat \rho _k $ plays the role of the front face defining function.
\end{rem}

With the new filtration we need to refine the composition rule.
\begin{thm} 
\label{CompoThm}
Given any collection of data $\varepsilon _1, \varepsilon _2 > 0, 
\lambda ^{(1)} _1 , \cdots , \lambda ^{(1)} _r, \lambda ^{(2)} _1 , \cdots, \lambda ^{(2)} _r \geq 0 $.
Suppose that 
$$\varepsilon _3 := \min \Big\{ \varepsilon _1 - \sum _{k = 1} ^r \omega _k \lambda ^{(2)} _k ,
\varepsilon _2 - \sum _{k =1} ^r \omega _k \lambda ^{(1)} _k \Big\} > 0 ,$$
with $\omega _ k $ is as in Theorem \ref{DEst}.
Then the convolution product between any pair of elements in 
$\Psi ^{-\infty } _{\varepsilon _1 ; \lambda ^{(1)} _1 , \cdots , \lambda ^{(1)} _r } (\cG) $ and 
$\Psi ^{-\infty } _{\varepsilon _2 ; \lambda ^{(2)} _1 , \cdots , \lambda ^{(2)} _r } (\cG) $
is well defined.
Moreover, one has
\begin{equation}
\Psi ^{-\infty } _{\varepsilon _1 ; \lambda ^{(1)} _1 , \cdots , \lambda ^{(1)} _r } (\cG) 
\circ \Psi ^{-\infty } _{\varepsilon _2 ; \lambda ^{(2)} _1 , \cdots , \lambda ^{(2)} _r } (\cG)
\subseteq \Psi ^{-\infty } _{\varepsilon _3; 
\lambda ^{(1)} _1 + \lambda ^{(2)} _1, \cdots , \lambda ^{(1)} _r + \lambda ^{(2)} _r } (\cG). 
\end{equation}
\end{thm}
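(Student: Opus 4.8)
The plan is to reduce the statement to Lemma \ref{CompoLem} by tracking the extra factors $\hat\rho_k^{\lambda_k}$ through the convolution integral. Fix $\psi_1 \in \Psi^{-\infty}_{\varepsilon_1; \lambda^{(1)}_1,\cdots,\lambda^{(1)}_r}(\cG)$ and $\psi_2 \in \Psi^{-\infty}_{\varepsilon_2;\lambda^{(2)}_1,\cdots,\lambda^{(2)}_r}(\cG)$, and write the convolution as
$$ \psi_1 \circ \psi_2 (c) = \int_{b \in \cG_{\bs(c)}} \psi_1(c b^{-1}) \psi_2(b)\, \mu_{\bs(c)}(b), \quad c \in \cG. $$
The point $c$ has a fixed target $x_0 := \bt(c)$ and source $y_0 := \bs(c)$, and the variable $b$ ranges over $\cG_{y_0}$ with $\bt(b)$ sweeping through $\rM$. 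The factor $\hat\rho_k$ attached to $\psi_2(b)$ controls $\rho_k(\bt(b))$ and $\rho_k(y_0)$; the factor attached to $\psi_1(cb^{-1})$ controls $\rho_k(\bt(c)) = \rho_k(x_0)$ and $\rho_k(\bt(b))$ (since $\bs(cb^{-1}) = \bt(b)$). The only genuinely variable quantity is $\rho_k(\bt(b))$, and the crucial input is Theorem \ref{DEst}: along the $\bs$-fiber $\cG_{y_0}$ one has $|\log(\rho_k(\bt(b))/\rho_k(y_0))| \leq \omega_k\, d(b,y_0)$, so that $\rho_k(\bt(b)) \geq \rho_k(y_0)\, e^{-\omega_k d(b,\bs(b))}$, and symmetrically $\rho_k(\bt(b)) \geq \rho_k(x_0)\, e^{-\omega_k d(cb^{-1},\bt(b))}$.

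The key estimate is therefore: the product $\hat\rho_k^{\lambda^{(1)}_k}(cb^{-1})\,\hat\rho_k^{\lambda^{(2)}_k}(b)$ is bounded below, up to a constant, by $\hat\rho_k^{\lambda^{(1)}_k+\lambda^{(2)}_k}(c)$ times a factor $e^{-\omega_k(\lambda^{(1)}_k+\lambda^{(2)}_k) d}$ where $d$ is a distance along the fiber. Turning this around, in the integrand of $\psi_1 \circ \psi_2$ the available decay $e^{-\varepsilon_1' d(cb^{-1},\bt(b))}e^{-\varepsilon_2' d(b,\bs(b))}$ can be split: a portion $e^{-\omega_k \lambda^{(2)}_k d(cb^{-1},\bt(b))}$ taken from the $\psi_1$-decay is spent producing the missing power of $\rho_k(\bt(b))$ needed to complete $\hat\rho_k^{\lambda^{(2)}_k}(b)$ into a bound involving only $\rho_k$ of the endpoints, and likewise a portion $e^{-\omega_k\lambda^{(1)}_k d(b,\bs(b))}$ from the $\psi_2$-decay. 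After extracting $\prod_k \hat\rho_k^{\lambda^{(1)}_k+\lambda^{(2)}_k}(c)$ as an overall factor, what remains inside the integral is precisely a convolution of the type handled in Lemma \ref{CompoLem}, but with the reduced exponential rates $\varepsilon_1 - \sum_k \omega_k\lambda^{(2)}_k$ and $\varepsilon_2 - \sum_k\omega_k\lambda^{(1)}_k$; the hypothesis $\varepsilon_3 > 0$ guarantees both are positive, so that argument applies verbatim and yields finiteness of the integral together with the bound $e^{-\varepsilon_3' d(c,\bs(c))}$. For the derivative estimates one uses, exactly as in the proof of Lemma \ref{CompoLem}, that $L_{d\tilde\bm(X^\br\oplus Y^\br)}(\psi_1\circ\psi_2)(ab^{-1}) = \int (L_{d\tilde\bm(X^\br\oplus 0)}\psi_1)(ac^{-1})(L_{d\tilde\bm(0\oplus Y^\br)}\psi_2)(cb^{-1})\,\mu_{\bs(a)}(c)$, so that derivatives fall on $\psi_1$ and $\psi_2$ separately, each of which already satisfies the requisite bound; the $\hat\rho_k$ factors are unaffected by differentiation since they are pulled back from $\rM$ and do not depend on the fiber variable of integration.

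I would organize the proof in three steps: (i) establish the pointwise lower bound $\hat\rho_k^{\lambda^{(1)}_k}(cb^{-1})\hat\rho_k^{\lambda^{(2)}_k}(b) \gtrsim \hat\rho_k^{\lambda^{(1)}_k+\lambda^{(2)}_k}(c)\, e^{-\omega_k\lambda^{(1)}_k d(b,\bs(b)) - \omega_k\lambda^{(2)}_k d(cb^{-1},\bs(cb^{-1}))}$, using Theorem \ref{DEst} and the triangle inequality on each fiber and noting that $\hat\rho_k$ combines $\bs^*\rho_k$ and $\bt^*\rho_k$ so that the worse of the two comparisons may always be chosen; (ii) substitute this into the convolution integral, pull out the overall $\prod_k\hat\rho_k^{\lambda^{(1)}_k+\lambda^{(2)}_k}(c)$, and observe that the remaining integral has exactly the form treated in Lemma \ref{CompoLem} with exponential rates $\varepsilon_1 - \sum_k\omega_k\lambda^{(2)}_k > 0$ and $\varepsilon_2 - \sum_k\omega_k\lambda^{(1)}_k > 0$; (iii) handle the higher covariant derivatives by the product-rule-free formula above, reducing once more to step (ii). The main obstacle I anticipate is step (i): one must be careful that $\rho_k(\bt(b))$ is simultaneously controlled from below by both $\rho_k(y_0)e^{-\omega_k d(b,y_0)}$ and $\rho_k(x_0)e^{-\omega_k d(cb^{-1},\bt(b))}$, and must verify that the \emph{same} distance that appears in the exponential decay of $\psi_1$ (resp. $\psi_2$) is the one governing the $\rho_k$ comparison — this is where the identities $\bs(cb^{-1}) = \bt(b)$ and right-invariance of the metric are used, and a small amount of care is needed near points where $\rho_k$ vanishes (handled by the modification in Remark \ref{ModNote}, and by the fact that on $\cG_k$ the kernels are smooth so the estimate is vacuous in the limit). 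Once step (i) is in hand, steps (ii) and (iii) are essentially a rerun of Lemma \ref{CompoLem}.
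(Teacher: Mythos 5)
Your proposal follows essentially the same route as the paper: apply Theorem \ref{DEst} to trade the uncontrolled factor $\rho_k(\bt(b))$ in $\hat\rho_k(cb^{-1})^{\lambda^{(1)}_k}$ and $\hat\rho_k(b)^{\lambda^{(2)}_k}$ for $\rho_k(\bs(c))$ and $\rho_k(\bt(c))$ at the cost of $e^{\omega_k\lambda^{(1)}_k d(b,\bs(b))}$ and $e^{\omega_k\lambda^{(2)}_k d(c,b)}$ respectively, factor out $\prod_k\hat\rho_k(c)^{\lambda^{(1)}_k+\lambda^{(2)}_k}$, and rerun Lemma \ref{CompoLem} with the reduced rates; the derivative estimates are handled identically. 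One caveat: the displayed inequality in your step (i) is stated as a lower bound with $e^{-\omega_k(\cdots)}$, which is the useless direction for an upper bound on the integrand — what your own prose correctly describes, and what must be proved, is $\hat\rho_k^{\lambda^{(1)}_k}(cb^{-1})\,\hat\rho_k^{\lambda^{(2)}_k}(b)\leq \hat\rho_k^{\lambda^{(1)}_k+\lambda^{(2)}_k}(c)\,e^{+\omega_k\lambda^{(1)}_k d(b,\bs(b))+\omega_k\lambda^{(2)}_k d(c,b)}$, which also follows from the two-sided estimate of Theorem \ref{DEst}.
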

\begin{proof} 
For any $\psi _1 , \in \Psi ^{-\infty } _{\varepsilon _1 ; \lambda ^{(1)} _1 , \cdots , \lambda ^{(1)} _r } (\cG) $,
$\psi _2 , \in \Psi ^{-\infty } _{\varepsilon _2 ; \lambda ^{(2)} _1 , \cdots , \lambda ^{(2)} _r } (\cG) $,
the convolution product reads (provided the integral is finite):
\begin{align*} 
\psi _1 \circ \psi _2 (a) &=  \int _{b \in \cG _{\bs (a)}} u _1 (a b ^{-1}) u _2 (b) \mu _{\bs (a)} (b) \\
\leq & M _1 \int _{b \in \cG _{\bs (a)}} 
e ^{- \varepsilon ' _1 d (a , b) } e ^{ - \varepsilon ' _2 d (b , \bs (b))} \\
\times & \prod _{i =1 } ^r 
\big( (\bs ^* \rho _i (a b ^{-1}))^2 + (\bt ^* \rho _i (a b ^{-1}))^2 \big) ^{\frac{\lambda ^{(1)} _i}{2}}
\big( (\bs ^* \rho _i (b ))^2 + (\bt ^* \rho _i (b) )^2 \big) ^{\frac{\lambda ^{(2)} _i}{2}} \mu _{\bs (a)} (b).
\end{align*}
For some $\varepsilon ' _1 > \varepsilon _1 , \varepsilon '_2 > \varepsilon _2$.
Consider the product term in the integrand, one has 
\begin{align*} 
\bs ^* \rho _i (a b ^{-1} ) &= \rho _i (\bt (b)) \leq e ^{\omega _i d (b, \bs (b) )} \rho _i (\bs (a)) \\
\bt ^* \rho _i (a b ^{-1} ) &= \rho _i (\bt (a)) \\
\bt ^* \rho _i (b ) &= \rho _i (\bt (b)) \leq e ^{\omega _i d (b, a )} \rho _i (\bt (a)) \\
\bs ^* \rho _i (b ) &= \rho _i (\bs (a)),
\end{align*}
where we used Theorem \ref{DEst} for the first and third line.
Hence, one estimates the integrand
\begin{align*}
e ^{- \varepsilon ' _1 d (a , b) } & e ^{ - \varepsilon ' _2 d (b , \bs (b))} \prod _{i=1 } ^r 
\big( (\bs ^* \rho _i (a b ^{-1}))^2 + (\bt ^* \rho _i (a b ^{-1}) )^2 \big) ^{\frac{\lambda ^{(1)} _i}{2}}
\big( (\bs ^* \rho _i (b ))^2 + (\bt ^* \rho _i (b))^2 \big) ^{\frac{\lambda ^{(2)} _i}{2}} \\
%& \leq 
%e ^{- (\varepsilon ' _1 - \sum _{i=1} ^r \omega _i \lambda ^{(2)} _i )d (a , b) }
%e ^{ - (\varepsilon ' _2 \sum _{i=1} ^r \omega _i \lambda ^{(1)} _i ) d (b , \bs (b))}
%\prod _{i=1} ^r ((\rho _i (\bs (a))^2 + \rho _i (\bt (a))^2) ^{\frac{\lambda ^{(1)} _i + \lambda ^{(2)} _i }{2}}
& \leq \Big( \prod _{i=1 } ^r \hat \rho _i (a) ^{\lambda ^{(1)} _i + \lambda ^{(2)} _i} \Big)
e ^{- (\varepsilon ' _1 - \sum _{i=1} ^r \omega _i \lambda ^{(2)} _i )d (a , b) }
e ^{- (\varepsilon ' _2 \sum _{i=1} ^r \omega _i \lambda ^{(1)} _i ) d (b , \bs (b))} .
\end{align*}
The theorem follows by factoring out the term 
$\prod _{i=1 } ^r \hat \rho _k (a) ^{\lambda ^{(1)} _k + \lambda ^{(2)} _k} $,
and then following the arguments of Lemma \ref{CompoLem}.
\end{proof}

In the case $\cG$ is uniformly degenerate, since $\omega _k $ can be made arbitrary small,
it follows that convolution is always defined and
\begin{equation}
\Psi ^{-\infty } _{\varepsilon _1 ; \lambda ^{(1)} _1 , \cdots , \lambda ^{(1)} _r } (\cG) 
\circ \Psi ^{-\infty } _{\varepsilon _2 ; \lambda ^{(2)} _1 , \cdots , \lambda ^{(2)} _r } (\cG)
\subseteq \Psi ^{-\infty } _{\min \{ \varepsilon _1 , \varepsilon _2 \}; 
\lambda ^{(1)} _1 + \lambda ^{(2)} _1, \cdots , \lambda ^{(1)} _r + \lambda ^{(2)} _r } (\cG). 
\end{equation}

We turn to study convolution of singular kernels.
\begin{lem}
\label{DiffIdeal}
For any $m \in \bbZ , \varepsilon > 0, \lambda _1 , \cdots , \lambda _r \geq 0 $,
\begin{equation}
\Psi ^{[m]} (\cG ) \circ \Psi ^{- \infty } _{\varepsilon _1 ; \lambda _1 , \cdots , \lambda _r } (\cG) 
\subset \Psi ^{-\infty } _{\varepsilon _1 ; \lambda _1 , \cdots , \lambda _r } (\cG) .
\end{equation}
\end{lem}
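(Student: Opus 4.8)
The plan is to reduce the statement to a fiberwise kernel estimate. Write $\psi_1$ for the reduced kernel of $\varPsi_1 \in \Psi^{[m]}(\cG)$: by Lemma \ref{RedKerId} it is conormal at $\rM$ and smooth on $\cG \setminus \rM$, and since $\varPsi_1$ is uniformly supported it has compact support, contained in a set on which $d(a,\bs(a))$ is bounded. First I would pick a cut-off $\chi \in C^\infty(\cG)$ equal to $1$ near $\rM$ and supported so close to $\rM$ that $\cG_x \cap \Supp\chi$ lies in a normal geodesic ball of radius $< \delta_0$ for every $x$, where $\delta_0$ is half a lower bound for the injectivity radii of the $\bs$-fibers (which have bounded geometry, Appendix A); set $\sigma_1 := \chi\psi_1$ and $\kappa_1 := (1-\chi)\psi_1 \in C^\infty_c(\cG)$. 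The smooth part is disposed of by the composition results already available: one checks that $C^\infty_c(\cG) \subset \Psi^{-\infty}_{\varepsilon';\bzero}(\cG)$ for every $\varepsilon' > 0$ — on the compact support of $\kappa_1$ the weight $e^{-\varepsilon' d(a,\bs(a))}$ is bounded below, and $\tilde\nabla^l(\tilde\bm^*\kappa_1)$ descends, by (\ref{ReducedVF}), to a compactly supported and hence bounded expression in $\kappa_1$ and its fiberwise covariant derivatives — so $\kappa_1 \circ \psi_2 \in \Psi^{-\infty}_{\varepsilon_1;\lambda_1,\ldots,\lambda_r}(\cG)$ by Theorem \ref{CompoThm} (taking $\varepsilon' \geq \varepsilon_1$ in the inclusion displayed immediately after it, which applies since $\cG$ is uniformly degenerate). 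Everything then comes down to $\sigma_1 \circ \psi_2$.

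For $\sigma_1 \circ \psi_2$ I would argue fiberwise. The operator $\varPsi^{\sigma_1}_x$ with reduced kernel $\sigma_1$ is a pseudo-differential operator of order $m$ on $\cG_x$ whose Schwartz kernel is supported within distance $\delta_0$ of the diagonal and conormal there; writing it over the exponential coordinate patches of \cite{Nistor;IntAlg'oid} as an oscillatory integral with symbol in $S^m$ and with all the data (symbol seminorms, cut-offs, the Jacobian of $\exp$, curvature) bounded uniformly in $x$ by bounded geometry, the standard integration by parts in the fiber variable — regularizing the frequency integral by $(1+|\zeta|^2)^{-N}$ with $2N > m + \dim\rM$ — produces the pointwise bound $|\sigma_1 \circ \psi_2(a)| \leq C \sup_{b \in B(a,\delta_0)} \sum_{|\alpha|\leq 2N} |\nabla^\alpha\psi_2(b)|$ with $C$ independent of $a$. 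Into this I insert the hypothesis $|\nabla^\alpha\psi_2(b)| \leq M_{|\alpha|} e^{-\varepsilon_1' d(b,\bs(b))} \prod_i \hat\rho_i(b)^{\lambda_i}$ (for some $\varepsilon_1' > \varepsilon_1$), together with the observations that for $b$ in the same $\bs$-fiber as $a$ with $d(a,b) < \delta_0$ one has $d(b,\bs(b)) \geq d(a,\bs(a)) - \delta_0$, while Theorem \ref{DEst} gives $\rho_i(\bt(b)) \leq e^{\omega_i\delta_0}\rho_i(\bt(a))$ and hence $\hat\rho_i(b) \leq e^{\omega_i\delta_0}\hat\rho_i(a)$; this yields $|\sigma_1 \circ \psi_2(a)| \leq C' e^{-\varepsilon_1' d(a,\bs(a))} \prod_i \hat\rho_i(a)^{\lambda_i}$, exactly the $l=0$ clause of Definition \ref{FFDegen}, with only constants (not the rate $\varepsilon_1'$) lost.

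For the estimates on $\tilde\nabla^l(\tilde\bm^*(\sigma_1 \circ \psi_2))$ I would use the formula for covariant derivatives of a convolution from the proof of Lemma \ref{CompoLem}: the left ($X^\br$) parts of the $l$ derivatives fall on $\sigma_1$, turning it into the reduced kernel of $P \circ \varPsi_1$ for a differential operator $P$ of order $\leq l$ — an operator of order $\leq m+l$ still supported within $\delta_0$ of the diagonal — while the right ($Y^\br$) parts fall on $\psi_2$, whose higher covariant derivatives obey the same weighted bounds by hypothesis; re-running the previous estimate with $m+l$ in place of $m$ (so $N = N(l)$) and absorbing the finitely many lower-order correction terms gives the $l$-th bound. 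That $\sigma_1 \circ \psi_2$ is smooth on each $\cG_k$ (there $\varPsi^{\sigma_1}_x$ is a genuine pseudo-differential operator applied to the smooth function $\psi_2|_{\cG_x}$, smoothly in $x \in \rM_k$) and continuous on $\cG$ (routine, from the compact support of $\sigma_1$ and the uniform estimates) then completes the verification that $\sigma_1 \circ \psi_2 \in \Psi^{-\infty}_{\varepsilon_1;\lambda_1,\ldots,\lambda_r}(\cG)$, and the lemma follows. The delicate point throughout will be the uniformity in $x$ — including as $x$ degenerates onto the lower strata — of the constant $C$ and the order $2N$ in the pointwise bound on $\varPsi^{\sigma_1}_x$: this is exactly where one must invoke that the $\cG_x$ are manifolds with bounded geometry with bounds independent of $x$ and that uniform support of $\varPsi_1$ makes $\{\varPsi^{\sigma_1}_x\}_x$ a uniform family; granting that, the rest is bookkeeping of constants of the sort already done in Lemma \ref{CompoLem} and Theorem \ref{CompoThm}.
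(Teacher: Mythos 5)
Your proof is correct in outline, but it takes a genuinely different route from the paper's. The paper never estimates the conormal singularity directly: it fixes an auxiliary elliptic $\Delta\in\Psi^{[m']}(\cG)$ with $m'>m+\dim\cG-\dim\rM$ and a parametrix $Q$ with $S=\id-Q\Delta$ smoothing, and writes $\psi\circ\kappa=(\psi S)\circ\kappa+(\psi Q)\circ(\Delta\kappa)$. The point of this identity is that $\Delta\kappa$ stays in $\Psi^{-\infty}_{\varepsilon_1;\lambda_1,\cdots,\lambda_r}(\cG)$ for free, since Definition \ref{FFDegen} already controls all fiberwise covariant derivatives, while $\psi Q$ and $\psi S$ are uniformly supported of order below $-(\dim\cG-\dim\rM)$ and hence are continuous, compactly supported kernels by classical regularity; everything then reduces to Theorem \ref{CompoThm}. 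You instead split $\psi=\chi\psi+(1-\chi)\psi$ and attack the near-diagonal part by a uniform fiberwise bound $|\varPsi^{\sigma_1}_x u(a)|\leq C\sup_{B(a,\delta_0)}\sum_{|\alpha|\leq 2N}|\nabla^\alpha u|$. That bound is standard, and your transfer of the weights from $b$ to $a$ via Theorem \ref{DEst} together with $d(b,\bs(b))\geq d(a,\bs(a))-\delta_0$ is exactly right; but you are re-proving by hand what the parametrix identity obtains for free, and the uniformity in $x$ of $C$ and $N$ (which you rightly flag as the delicate point) is the entire content of that extra work. Two small slips, neither fatal: the integrability threshold for the frequency integral should involve the fiber dimension $\dim\cG-\dim\rM$ rather than $\dim\rM$; and for the smooth part you do not need uniform degeneracy of $\cG$ --- since $\kappa_1=(1-\chi)\psi$ lies in $\Psi^{-\infty}_{\varepsilon';\bzero}(\cG)$ for \emph{every} $\varepsilon'$, you may take $\varepsilon'\geq\varepsilon_1+\sum_k\omega_k\lambda_k$ and apply Theorem \ref{CompoThm} directly.
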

\begin{proof}
Suppose we are given kernels $\psi \in \Psi ^{[m]} (\cG), 
\kappa \in \Psi ^{- \infty } _{\varepsilon _1 ; \lambda _1 , \cdots , \lambda _r } (\cG) $.
Fix any elliptic differential operator $\Delta \in \Psi ^{[m']} (\cG )$ with order $m' > m + \dim \cG - \dim \rM $.
Let $Q \in \Psi ^{[- m']} (\cG )$ be a parametrix of $\Delta $. In other words, one has
$$ S := \id - Q \Delta \in \Psi ^{- \infty} (\cG ).$$  
Then one can write
$$ \psi \circ \kappa
= ( \psi S ) \circ \kappa + ( \psi Q ) \circ (\Delta \kappa ).$$
By definition, \ref{FFDegen},
$\kappa \in \Psi ^{- \infty } _{\varepsilon _1 ; \lambda _1 , \cdots , \lambda _r } (\cG) $
implies $\Delta \kappa $ lies in the same space.

On the other hand, $\psi Q$ is a uniformly supported pseudo-differential operator of order less that 
$( - \dim \cG + \dim \rM )$.
Therefore it is a classical result (see \cite{Hormander;3}) that 
$ \psi Q $ is continuous kernel on $\cG$ with compact support.
It follows that the proof of \ref{CompoThm} applies and 
$( \psi Q ) \circ (\Delta \kappa ) \in \Psi ^{- \infty } _{\varepsilon ; \infty } (\cG) $. 
The same argument holds for $( \psi S ) \circ \kappa $.
Hence we conclude that 
$$ \psi \circ \kappa \in \Psi ^{- \infty } _{\varepsilon _1 ; \lambda _1 , \cdots , \lambda _r } (\cG) ,$$
as asserted.
\end{proof}

By considering adjoint of Lemma \ref{DiffIdeal}, it is obvious that
\begin{equation}
\Psi ^{- \infty } _{\varepsilon _1 ; \lambda _1 , \cdots , \lambda _r } (\cG) \circ \Psi ^{[m]}
\subset \Psi ^{-\infty } _{\varepsilon _1 ; \lambda _1 , \cdots , \lambda _r } (\cG) ,
\end{equation}
as well.

\begin{nota}
For $1 \leq k \leq r - 1 $, denote 
\begin{equation}
\Psi ^{-\infty } _{\varepsilon ; \lambda _1 , \cdots \lambda _k , \infty} (\cG)
:= \bigcap _{ \lambda _{k + 1} > 0 } 
\Psi ^{-\infty } _{\varepsilon ; \lambda _1 , \cdots \lambda _k , \lambda _{k+1} , \cdots , \lambda _r } (\cG).
\end{equation}
Note that $\Psi ^{-\infty } _{\varepsilon ; \lambda _1 , \cdots \lambda _k , \infty } (\cG)$   
is uniquely defined since $\rho _1 \leq \rho _2 \leq \cdots \leq \rho _r $.

If for some $\lambda _1 = \cdots = \lambda _{r' } = 0$ for some $r ' \leq r $, 
we shall denote $\Psi ^{- \infty } _{\varepsilon ; \lambda _1 , \cdots \lambda _r } (\cG)$ by
$$ \Psi ^{-\infty } _{\varepsilon ; \bzero _{r' } , \lambda _{r' + 1} , \cdots \lambda _r } (\cG).$$
For convenience, we shall also write
$$ \Psi ^{- \infty } _{\bullet , \bzero _{r'} , \lambda _{r' + 1}, \cdots , \lambda _r } (\cG ) 
:= \bigcup _{\varepsilon > 0 } 
\Psi ^{- \infty } _{\varepsilon ; \bzero _{r'} , \lambda _{r' + 1}, \cdots , \lambda _r } (\cG ). $$
\end{nota}

\begin{lem}
For any $\psi \in \Psi ^{-\infty } _{\varepsilon ; \lambda _1 , \cdots \lambda _k , \infty} (\cG)$,
all derivatives of $\psi $ vanish at $\bar \cG _{k + 1} $ and $\psi |_{\cG _k }$ is smooth.
\end{lem}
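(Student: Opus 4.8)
The plan is to unwind the definition of the intersection $\Psi^{-\infty}_{\varepsilon;\lambda_1,\cdots,\lambda_k,\infty}(\cG) = \bigcap_{\lambda_{k+1}>0}\Psi^{-\infty}_{\varepsilon;\lambda_1,\cdots,\lambda_k,\lambda_{k+1},\cdots,\lambda_r}(\cG)$ and exploit the fact that, for a fixed kernel $\psi$ in this intersection, the constants $M_l$ in Definition \ref{FFDegen} may be taken uniformly (by rescaling, replacing $\lambda_{k+1}$ by any value in a bounded interval changes the bound only by a bounded factor, since $\hat\rho_{k+1}$ is bounded on the compact-unit groupoid). Thus for every $l$ there is $M_l>0$ with
$$ \tilde\nabla^l(\tilde\bm^*\psi)(X_l^\br\oplus Y_l^\br,\cdots,X_1^\br\oplus Y_1^\br)(a,b) \leq M_l\, e^{-\varepsilon' d(a,b)}\prod_{i=1}^l(|X_i|+|Y_i|)\,\hat\rho_{k+1}(a)^{\lambda_{k+1}}\prod_{i=1}^{k}\hat\rho_i(a)^{\lambda_i} $$
for \emph{all} $\lambda_{k+1}>0$ simultaneously. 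Since $\hat\rho_{k+1} = ((\bs^*\rho_{k+1})^2+(\bt^*\rho_{k+1})^2)^{1/2}$ vanishes precisely on $\bs^{-1}(\bar\rM_{k+1})\cap\bt^{-1}(\bar\rM_{k+1}) = \bar\cG_{k+1}$, letting $\lambda_{k+1}\to\infty$ forces the left-hand side to vanish at every point of $\bar\cG_{k+1}$ where $\hat\rho_{k+1}<1$, hence on a neighborhood of $\bar\cG_{k+1}$ minus its interior — more precisely, at any point $c\in\bar\cG_{k+1}$ we have $\hat\rho_{k+1}(c)=0$, so the bound gives $0$ for the value of the covariant derivative expressions there.

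The first step I would carry out is to justify the uniformity of $M_l$: fix $l$, pick $M_l$ valid for $\lambda_{k+1}=1$, and note that for $0<\lambda_{k+1}\le 1$ the extra factor $\hat\rho_{k+1}^{\lambda_{k+1}-1}$ is bounded by a constant depending only on $\sup_\cG\hat\rho_{k+1}$ and $\min\{1,\inf\}$-type considerations; actually since $\hat\rho_{k+1}\le\hat\rho_{k+1}^{\lambda_{k+1}}\cdot\hat\rho_{k+1}^{1-\lambda_{k+1}}$ and $\hat\rho_{k+1}$ is bounded above, we get a uniform bound over $\lambda_{k+1}\in(0,1]$, and for $\lambda_{k+1}>1$ we simply use $\hat\rho_{k+1}^{\lambda_{k+1}}\le (\sup\hat\rho_{k+1})^{\lambda_{k+1}-1}\hat\rho_{k+1}$; choosing $\psi$ in the intersection means it lies in every $\Psi^{-\infty}_{\varepsilon;\cdots,\lambda_{k+1},\cdots}$, so the cleanest route is: for each $c\in\bar\cG_{k+1}$ and each $l$, the value $\tilde\nabla^l(\tilde\bm^*\psi)(\cdots)(a,b)$ at a preimage $(a,b)$ of $c$ is bounded by $M_l(\lambda_{k+1})\,\hat\rho_{k+1}(c)^{\lambda_{k+1}}\cdot(\text{stuff})=0$ directly, since $\hat\rho_{k+1}(c)=0$, for \emph{any single} $\lambda_{k+1}>0$ — so in fact no uniformity is even needed: membership in $\Psi^{-\infty}_{\varepsilon;\lambda_1,\cdots,\lambda_k,1,\lambda_{k+2},\cdots}$ alone already kills all covariant derivatives on $\bar\cG_{k+1}$. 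The second step is to translate "all $\tilde\nabla^l$-type expressions vanish" into "all derivatives vanish": since the $X_i^\br\oplus Y_i^\br$ span $T\tilde\cG$ pointwise (they span $\ker d\tilde\bs\oplus$-type directions, and on $\tilde\cG$ this is the full tangent space because $\bs$ is a submersion and the fibers are the relevant objects), and since $d\tilde\bm$ maps these onto a spanning set of $T\cG$, the vanishing of $\tilde\nabla^l(\tilde\bm^*\psi)$ against all such tuples is equivalent, via the usual inductive formula (\ref{HigherD}) relating covariant derivatives to iterated Lie derivatives plus lower-order correction terms, to the vanishing of all iterated derivatives of $\psi$ along $d\tilde\bm$-images, i.e. of $\nabla^l\psi$ on $\cG$, at $\bar\cG_{k+1}$; an induction on $l$ converts this to vanishing of ordinary partial derivatives in any chart. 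The third step is smoothness of $\psi|_{\cG_k}$, which is immediate from the definition $\Psi^{-\infty}_{\varepsilon;\cdots}(\cG)\subset\{\psi\in C^0(\cG):\psi|_{\cG_k}\in C^\infty(\cG_k)\}$, so this clause requires no argument beyond citing Definition \ref{FFDegen}.

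The main obstacle I anticipate is the bookkeeping in step two: the covariant-derivative expression $\tilde\nabla^l(\tilde\bm^*\psi)$ evaluated against right-invariant frames is not literally an iterated directional derivative — it carries the connection correction terms $\nabla_{X_1}X_k$ from (\ref{HigherD}). So to conclude that all ordinary derivatives of $\psi$ vanish on $\bar\cG_{k+1}$, I would argue inductively: $l=1$ gives $L_{d\tilde\bm(X^\br\oplus Y^\br)}\psi = 0$ on $\bar\cG_{k+1}$ for all $X,Y$, hence the full differential $d\psi$ vanishes there (the $d\tilde\bm(X^\br\oplus Y^\br)$ span $T\cG$ along $\bar\cG_{k+1}$); assuming all derivatives up to order $l-1$ vanish on $\bar\cG_{k+1}$, the correction terms in the formula for $\tilde\nabla^l$ involve only derivatives of order $\le l-1$ and so vanish on $\bar\cG_{k+1}$, whence $L_{V_1}\cdots L_{V_l}\psi$ vanishes there for all $V_i$ in the spanning family, which gives vanishing of all order-$l$ derivatives. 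One must be slightly careful that $\bar\cG_{k+1}$ is an immersed submanifold (per the definition of boundary groupoid) along which "vanishing of all derivatives" is meant in the ambient sense, but since everything is local and the spanning family is ambient-spanning, this causes no real difficulty. I would also remark that the statement is the groupoid analogue of the classical fact that a conormal kernel vanishing to infinite order at a boundary face is smooth and flat there, exactly as in \cite{Melrose;Book}.
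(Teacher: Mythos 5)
There is a genuine gap in your step two, and it is fatal to the whole strategy. You claim that the vectors $d \tilde \bm (X ^\br \oplus Y ^\br )$ span $T _c \cG$ at points $c \in \bar \cG _{k+1}$. They do not: $d \tilde \bm (X ^\br \oplus 0 )$ is the right-invariant field $X ^\br (c) \in \ker d \bs _c$, and $d \tilde \bm (0 \oplus Y ^\br )$ lies in $\ker d \bt _c$, so the whole family spans at most $\ker d \bs _c + \ker d \bt _c$. Since $\bar \cG _{k+1} = \bs ^{-1} (\bar \rM _{k+1}) = \bt ^{-1} (\bar \rM _{k+1})$ is saturated, both of these subspaces are tangent to $\bar \cG _{k+1}$; the degeneration of the anchor on the lower strata is precisely what makes $\cG$ a boundary groupoid. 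Hence your induction only ever controls derivatives of $\psi$ in directions \emph{tangent} to $\bar \cG _{k+1}$ and never reaches the transverse derivatives, which are the actual content of the lemma (they are what make $\psi$ smooth across the stratum, not merely on the open part $\cG _k$). A symptom of the problem is your remark that a single finite $\lambda _{k+1}$ "already kills all covariant derivatives on $\bar \cG _{k+1}$": if that argument proved the lemma, then every element of $\Psi ^{-\infty}_{\varepsilon ; \lambda _1 , \cdots , \lambda _k , 1 , \cdots}(\cG)$ would be flat at $\bar \cG _{k+1}$, which is false --- a kernel comparable to $\hat \rho _{k+1}$ itself near $\bar \cG _{k+1}$ (its fiberwise derivatives are again $O(\hat \rho _{k+1})$ by Lemma \ref{LocalDegen}) lies in that space but has nonvanishing transverse derivative. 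The intersection over all $\lambda _{k+1} > 0$ is the essential hypothesis, and your argument discards it.

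The paper's mechanism is different and does use that hypothesis in an essential way: one works in an exponential coordinate chart $\bx ^{(\alpha )}_{Z _I}$ around a point of $\bar \cG _{k+1}$, uses Theorem \ref{DEst} on a region where $d (b , \bs (b)) < C$ to compare $\bs ^* \rho _{k+1}$ and $\bt ^* \rho _{k+1}$, and hence bounds $\hat \rho _{k+1}$ above and below by the Euclidean distance to $\bar \cG _{k+1}$ in the transverse coordinates $x _{\dim \cG _{k+1} + 1} , \cdots , x _{\dim \cG}$. Membership in the intersection then gives $| \psi | \leq C _N ( x ^2 _{\dim \cG _{k+1} + 1} + \cdots + x ^2 _{\dim \cG} ) ^N$ for \emph{every} $N$, and the same for the fiberwise derivatives; flatness at $\{ x _{\dim \cG _{k+1} + 1} = \cdots = x _{\dim \cG} = 0 \}$ is then read off from the decay rate (via difference quotients in the transverse variables), not from evaluating the derivative bounds at points of the stratum. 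If you want to salvage your write-up, replace step two by this decay-versus-coordinate comparison; your step one and step three are fine but carry essentially none of the weight.
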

\begin{proof}
It suffices to consider the derivatives on some coordinates patches.
For any $a \in \bar \cG _{k+1} $, let 
$$ \bx ^{(\alpha )} _{Z _I } (\tau , x)
:= \exp ( \tau \cdot ( X ^{(\alpha )} _1 , \cdots , X ^{(\alpha )} _k ) ) \exp Z _I (x) $$
be an exponential coordinates patch defined in Definition \ref{ExpCoord} around $a$.
We may assume that there exist constant $C > 0 $ such that $d (b , \bs (b)) < C $ on $U ^{(\alpha )} _{Z _I }$.
Then 
$$\hat \rho ^2 _{k+1} (\bx ^{(\alpha )} _{Z _I } (\tau , x)) \leq 
\rho ^2 _{k+1} (\bx ^{(\alpha )} _{Z _I } (\tau , x)) 
+ e ^{C \omega } \rho ^2 _{k+1} (\bx ^{(\alpha )} _{Z _I } (\tau , x)) ,$$
for some $\omega > 0 $.
Therefore the assumption 
$\psi \in \Psi ^{-\infty } _{\varepsilon ; \lambda _1 , \cdots \lambda _k , \infty} (\cG)$
implies 
$$ \psi \leq (1 + e ^{C \omega }) (x ^2 _{\dim \cG _{k+1} +1} + \cdots + x ^2 _{\dim \cG })^N $$ 
for any $N > 0 $, which in turn implies all derivatives of $\psi $ at the subset
$\{ x _1 = \cdots = x _{\dim \cG _{k+1} } \} $ vanishes.

Since by definition, $\psi $ is smooth on $\cG _k$ and $\bar \cG _k = \cG _k \bigcup \bar \cG _{k+1} $,
it follows that $\psi $ is smooth on $\bar \cG _k $. 
\end{proof}

\begin{dfn}
Given $r $ sequences of number $\{ \lambda ^{(1)} _i \} , \cdots , \{ \lambda ^{(r)} _i \} $, 
such that for all $i $, $\lambda ^{(k)} _1 = 0$ , and $ \{ \lambda ^{(k)} _i \} \to \infty $ as $i \to \infty $.
Given $r$ sequences of kernels $\{ \psi ^{k} _i \} , k = 1 , \cdots , r$, such that
$$\psi ^{(k)} _i \in \Psi ^{- \infty } _{\varepsilon ; \bzero _{k - 1} , \lambda ^{(k)} _i , \infty } (\cG ),$$
and furthermore satisfy the smoothness conditions
$$ \psi ^{(k)} _1 \in C^\infty (\cG ) ,
\quad \psi ^{(k)} _i |_{\cG \setminus \bar \cG _k } \in C ^\infty (\cG \setminus \bar \cG _k ), 
\text { for all } i \geq 2.$$
We say that an element $\psi \in \Psi ^{- \infty } _{\varepsilon ; \bzero }(\cG ) $ has an asymptotic expansion
$$ \psi \sim \sum _{k = 1} ^r \Big(\sum _{i = 1} ^\infty \psi _i ^{(k)} \Big) ,$$
if for any sets of indexes $i _1 , \cdots , i _r $, 
$$ \psi - \sum _{k = 1} ^r \Big(\sum _{i = 1} ^{i _k} \psi _i ^{(k)} \Big) 
\in \Psi ^{- \infty } _{\varepsilon ; \lambda ^{(1)} _{1 + i _1 } , \cdots , \lambda ^{(r)} _{1 + i _r}} (\cG).$$
The space of kernels with asymptotic expansion is denoted by 
$ \Psi ^{- \infty } _\varepsilon (\cG ) $, and we write 
$\Psi ^{- \infty } _\bullet (\cG ) := \bigcap _{\varepsilon > 0 } \Psi ^{- \infty } _\varepsilon (\cG )$.
\end{dfn}

\section{Compact parametrix and generalized inverse of Fredholm operators}

\subsection{Extension of exponentially decaying kernels}
The following assumption is crucial in our construction of parametrices and inverses of uniformly supported 
pseudo-differential operator on $\cG$.
\begin{dfn}
\label{ExtDfn}
Let $\cG$ be a boundary groupoid, not necessary uniformly degenerate.
We say that $\cG$ satisfies the {\it extension property} if
for any $k \leq r , \varphi \in \Psi ^{- \infty} _{\bullet ; \infty } (\bar \cG _k ), 
\varphi _0 \in \Psi ^{- \infty } _{\bullet ; \bzero _k } (\cG ), \kappa \in \Psi ^{- \infty } (\cG ) $, 
and differential operator $D \in \Psi ^{[m]} (\cG )$ satisfying the relation
$$ D |_{\bar \cG _k} ( \varphi _0 | _{\bar \cG _k} + \varphi ) - \kappa |_{ \bar \cG _k} = 0 ,$$
there exists an extension $\bar \psi \in \Psi ^{- \infty} _{\bullet ; \bzero _k , \infty } (\cG )$ such that
\begin{enumerate}
\item $\bar \varphi |_{\bar \cG _k } = \varphi ;$
\item $ D (\varphi _0 + \bar \varphi ) - \kappa 
\in \Psi ^{- \infty } _{\bullet ; \bzero _{k - 1} , \lambda _k , \infty } (\cG ),$
for some $\lambda _k > 0 $.
\end{enumerate}

We say that $\cG $ has the smooth extension property if for any
$\varphi \in \Psi ^{- \infty} _{\bullet ; \infty } (\bar \cG _k ), \varphi _0$
with asymptotic expansion $\varphi _0 \sim \sum _{j = k + 1 } ^r \sum _{i = 1} \varphi ^{(j)} _i $,
$\varphi ^{(j)} _i \in \Psi ^{- \infty } _{\varepsilon ; \bzero _j , \lambda ^{(j )} _i , \infty } (\cG )$,
and pseudo-differential operator $\varPsi \in \Psi ^{[m]} (\cG )$ satisfying the relation
$$ \varPsi |_{\bar \cG _k} ( \varphi _0 | _{\bar \cG _k} + \varphi ) - \kappa |_{\bar \cG _k} = 0 ,$$
there exists an extension 
$\bar \varphi \in \Psi ^{- \infty} _{\bullet ; \bzero _k , \infty } (\cG ) \bigcap C ^\infty (\cG )$ such that
$\kappa - \varPsi ( \varphi _0 + \bar \varphi ) 
\in \Psi ^{- \infty } _{\bullet ; \bzero _{k- 1} , \lambda , \infty } (\cG )$
for some $\lambda > 0$.
\end{dfn}
In the appendix (See Propositions \ref{Proof1} and \ref{Proof2}), we shall prove that
\begin{thm}
\label{SimpleExt}
Any boundary groupoid of the form
$$ \cG = ( \rM _0 \times \rM _0 ) \bigsqcup \rG \times (\rM _1 \times \rM _1 ), $$
where $\rG $ is a nilpotent Lie group, satisfies the extension property.
\end{thm}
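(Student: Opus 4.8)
The plan is to verify the two conditions of Definition \ref{ExtDfn} (and its smooth variant) by an explicit extension procedure built on the exponential coordinate patches of Nistor et al.\ referred to in the introduction, exploiting the simple product structure $\cG = (\rM_0 \times \rM_0) \sqcup \rG \times (\rM_1 \times \rM_1)$ and nilpotency of $\rG$. Since $r = 1$ here, the only nontrivial case is $k = 1$: we are handed $\varphi \in \Psi^{-\infty}_{\bullet;\infty}(\bar\cG_1)$, a kernel $\varphi_0 \in \Psi^{-\infty}_{\bullet;\bzero_1}(\cG)$, an error $\kappa$, and a differential (resp.\ pseudo-differential) operator $D$ such that $D|_{\bar\cG_1}(\varphi_0|_{\bar\cG_1} + \varphi) - \kappa|_{\bar\cG_1} = 0$, and we must produce $\bar\varphi \in \Psi^{-\infty}_{\bullet;\bzero_1,\infty}(\cG)$ restricting to $\varphi$ on $\bar\cG_1 = \cG_1 = \rG \times (\rM_1 \times \rM_1)$ and improving the error by one order of vanishing at the singular set.

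First I would set up the geometry: cover a neighborhood of $\bar\cG_1$ in $\cG$ by finitely many exponential coordinate patches $\bx^{(\alpha)}_{Z_I}(\tau, x)$ of the form $\exp(\tau \cdot X^{(\alpha)}) \exp Z_I(x)$ (Definition \ref{ExpCoord} in the appendix), chosen so that the ``transverse'' coordinates $\tau$ parametrize the directions in $\rA$ that are non-degenerate transverse to $\bar\rM_1$, while the remaining coordinates sit along $\bar\cG_1$. In these coordinates $\hat\rho_1$ is comparable to the transverse coordinate radius, and $\varphi$ is a function on the slice $\{\tau = 0\}$ whose derivatives all decay exponentially on the $\bs$-fiber. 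Then I would \emph{extend} $\varphi$ off $\bar\cG_1$ simply by declaring it constant along the $\tau$-coordinate curves near $\bar\cG_1$ (a Seeley/Borel-type or just ``parallel transport'' extension), multiply by a cutoff supported near $\bar\cG_1$, and patch the local pieces together with a partition of unity. This gives a candidate $\bar\varphi$; that $\bar\varphi|_{\bar\cG_1} = \varphi$ is immediate, and that $\bar\varphi \in \Psi^{-\infty}_{\bullet;\bzero_1,\infty}(\cG)$ follows because extending constantly in $\tau$ does not destroy the fiberwise exponential decay (the $\bs$-fiber distance changes by a bounded amount across a single coordinate patch, as in the proof of the preceding lemma) and introduces no singular behavior at $\bar\cG_1$ beyond what $\varphi$ already has — in fact the ``$\infty$'' in the $\lambda$-slot is automatic since $\bar\varphi$ is smooth up to $\bar\cG_1$ with all transverse derivatives controlled.

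The substantive point is condition (ii): that $D(\varphi_0 + \bar\varphi) - \kappa$ gains a positive power of $\hat\rho_1$. Write $D(\varphi_0 + \bar\varphi) - \kappa = \big(D(\varphi_0 + \bar\varphi) - \kappa\big)$ and restrict to $\bar\cG_1$: by hypothesis this restriction vanishes, so $D(\varphi_0+\bar\varphi) - \kappa \in C^0$ vanishes on $\bar\cG_1$, and the issue is to quantify the \emph{rate}. Here is where nilpotency and uniform degeneracy enter: near $\bar\cG_1$ the operator $D$, written in the exponential coordinates, has coefficients of the transverse vector fields $\nu(X)$ that vanish to order $\lambda_1 \geq 2$ in $\rho_1$ by the uniform-degeneracy estimate (Lemma \ref{LocalDegen} and the refined estimate in the definition of uniformly degenerate), while $\bar\varphi$ was built to be $\tau$-independent so that $D$ hits it only through those degenerate transverse derivatives plus tangential derivatives along $\bar\cG_1$; the tangential part of $D(\varphi_0 + \bar\varphi)$ agrees with $D|_{\bar\cG_1}(\varphi_0|_{\bar\cG_1}+\varphi)$ to leading order and hence cancels $\kappa|_{\bar\cG_1}$, leaving a remainder that is $O(\hat\rho_1^{\lambda_1})$ with $\lambda_1 \geq 2 > 0$. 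Combined with Theorem \ref{CompoThm} (which tells us that composing with $D \in \Psi^{[m]}(\cG)$ and absorbing a factor of $\hat\rho_1^{\lambda_1}$ costs only a controlled amount $\omega_1 \lambda_1$ of exponential decay, and since $\cG$ is uniformly degenerate $\omega_1$ may be taken arbitrarily small), the remainder lands in $\Psi^{-\infty}_{\bullet;\lambda_1,\infty}(\cG) = \Psi^{-\infty}_{\bullet;\bzero_0,\lambda_1,\infty}(\cG)$, which is (ii). The smooth extension property is proved the same way, additionally noting that the constant-in-$\tau$ extension of a smooth $\varphi$ is smooth, and that an asymptotic-expansion input $\varphi_0 \sim \sum \varphi^{(j)}_i$ is handled term by term; for pseudo-differential (rather than differential) $D$ one first uses Lemma \ref{DiffIdeal} to reduce to the differential case, writing $D = DQ\Delta + DS$ with $\Delta$ elliptic differential and $S$ smoothing, exactly as in that lemma's proof.

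The main obstacle I anticipate is not any single estimate but the bookkeeping needed to show the patched global $\bar\varphi$ genuinely lies in the calculus with bounds: the exponential coordinate patches of Nistor et al.\ do not cover all of $\cG$, only a neighborhood of $\bar\cG_1$, so one must interpolate between the ``parallel'' extension near $\bar\cG_1$ and a plain cutoff further out, and then check that all mixed covariant derivatives $\tilde\nabla^l(\tilde\bm^*\bar\varphi)$ still satisfy the $e^{-\varepsilon' d(a,b)}$ bound uniformly across patch overlaps — this requires knowing that the transition maps between patches, and the comparison between the coordinate frame and the right-invariant frame $X^\br$, have derivatives bounded independently of how deep into the singular set one sits, which is precisely the content of the bounded-geometry structure of the $\bs$-fibers (Appendix A) together with the explicit nilpotent group coordinates. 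I expect this to be the bulk of the appendix proof of Propositions \ref{Proof1} and \ref{Proof2}, with the rate estimate in (ii) being comparatively short once the coordinates are in place.
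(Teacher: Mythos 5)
Your overall architecture agrees with the paper's: the extension is indeed taken to be constant along the exponential coordinate curves (through the projection $P^\alpha$ onto $\rM_1$), cut off near the singular set, and summed over a partition of unity; and the rate estimate in condition (ii) does come from the vanishing of the error on $\bar\cG_1$ together with the degeneracy of the transverse coefficients and an integration in the transverse variable. However, there is a genuine gap at the point you dismiss as ``bookkeeping.'' You propose to cover a neighborhood of $\bar\cG_1$ by \emph{finitely many} exponential patches and assert that the transition data have derivatives bounded uniformly. But $\cG_1 = \rG\times\rM_1\times\rM_1$ is non-compact in the $\rG$-direction, so the cover is necessarily infinite: the paper's Lemma \ref{GCover} produces patches indexed by words $Z_I$ in $\fg$ whose length $|I|$ grows linearly with the distance from the identity in $\rG$, and the extended kernel is an infinite sum over these patches. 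The frame-comparison maps $E_{Z_{|I|}}\circ\cdots\circ E_{Z_1}$ that relate the right-invariant frame to the coordinate frame are \emph{not} uniformly bounded over this cover; they grow with $|I|$, and the entire content of Lemma \ref{VGrowthEst} is that nilpotency of $\rG$ forces the iterated adjoints $\Ad_{\exp Z_m}\circ\cdots\circ\Ad_{\exp Z_1}$ to grow only polynomially in $|I|$, yielding the sub-exponential bound $K e^{C(\log|I|)^2}$ which can then be absorbed into the exponential decay $e^{-\varepsilon d(a,\bs(a))}$ of $\psi$ (using $d(b,\bs(b))\geq C_\rG|I|-C''$). Without this, the derivatives $L_{d\tilde\bm(0\oplus W^\br)}\bar\psi$ of the patched extension need not satisfy any exponential bound, and $\bar\psi$ would fail to lie in $\Psi^{-\infty}_{\bullet;\bzero}(\cG)$ at all.

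Relatedly, you misplace where nilpotency enters: you invoke it for the rate estimate in (ii), but the rate there comes only from uniform degeneracy and the constancy of the extension along coordinate curves (the paper's estimate (\ref{DecayChain}) and Proposition \ref{Proof2}); nilpotency is needed already for Proposition \ref{Proof1}, i.e.\ for the mere membership of $\bar\psi$ in the exponentially decaying calculus. You also omit the shrinking supports $B(\rM_1, e^{-\omega r_\fg|I|}r)$ of Lemma \ref{YCover}, without which the patches $\bx^{(\alpha)}_{X_J,Z_I}$ far out in $\rG$ are not diffeomorphisms onto their images and the estimate on $E_{Z_I}$ does not close. These are not cosmetic omissions: they are the reason the theorem is stated for nilpotent $\rG$ rather than for arbitrary $\rG$, and a proof that does not isolate them does not establish the result.
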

The proof is elementary (but tedious). 
Indeed we conjecture that:
\begin{conj}
\label{MainConj}
Every boundary groupoid $\cG$ with polynomial volume growth satisfies the smooth extension property.
\end{conj}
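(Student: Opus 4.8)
The plan is to reduce Conjecture~\ref{MainConj} to a sequence of purely local problems on exponential coordinate patches, to solve each one by the Taylor-expansion/indicial-operator argument already used for Theorem~\ref{SimpleExt} (Appendix B), and then to glue and Borel-sum. First I would fix $k$ and cover a neighbourhood of $\bar\cG_k$ in $\cG$ by the exponential coordinate patches of Nistor et al.\ \cite{Nistor;IntAlg'oid} (as in Definition~\ref{ExpCoord}), chosen so that $\bar\cG_k$ is a coordinate subspace and the transverse coordinates are adapted to the splitting $\rA|_{\bar\rM_k}=\bar\rA_k\oplus(\text{normal part})$. In such a chart a uniformly supported $\varPsi\in\Psi^{[m]}(\cG)$ has, by the degeneracy estimates behind Lemma~\ref{LocalDegen} and Theorem~\ref{DEst}, coefficients vanishing to prescribed orders along $\bar\cG_k$, so the constraint $\varPsi|_{\bar\cG_k}(\varphi_0|_{\bar\cG_k}+\varphi)-\kappa|_{\bar\cG_k}=0$ says precisely that the order-zero transverse Taylor coefficient of the would-be error already vanishes. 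One then solves recursively, order by order in the transverse variables, for the higher Taylor coefficients of $\bar\varphi$, compatibly with the asymptotic expansion $\varphi_0\sim\sum_{j>k}\sum_i\varphi^{(j)}_i$ of the given datum.

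The algebraic heart of each local step is the invertibility of the transverse model (``normal'') operator, and this is where the Fredholmness hypothesis enters through the criterion of Lauter--Nistor \cite{Nistor;GeomOp} quoted above: on the deepest stratum $\varPsi|_{\cG_r}$ is invertible (with exponentially decaying inverse by Shubin \cite{Shubin;BdGeom}), and on each $\bar\rM_k$ the model operator on $\rG_k\times(\rM_k\times\rM_k)$ is invertible, so the recursion for the Taylor coefficients of $\bar\varphi$ closes at every order. The resulting formal series $\bar\varphi\sim\sum_i\bar\varphi^{(k)}_i$ is then controlled termwise in the $\Psi^{-\infty}_{\bullet;\bzero_{k-1},\lambda^{(k)}_i,\infty}$ norms by Theorem~\ref{DEst} --- its logarithmic bound is exactly what converts transverse polynomial decay in $\rho_k$ into the required exponential-in-$d(a,b)$ decay --- while polynomial volume growth makes the convolution estimate of Theorem~\ref{CompoThm} available to estimate $\kappa-\varPsi(\varphi_0+\bar\varphi)$. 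A Borel-type summation inside $C^\infty(\cG\setminus\bar\cG_k)$ produces an actual smooth kernel $\bar\varphi\in\Psi^{-\infty}_{\bullet;\bzero_k,\infty}(\cG)\cap C^\infty(\cG)$ realizing the series, with $\bar\varphi|_{\bar\cG_k}=\varphi$ and error in $\Psi^{-\infty}_{\bullet;\bzero_{k-1},\lambda,\infty}(\cG)$ for some $\lambda>0$. One then patches the local extensions with a partition of unity subordinate to the cover and induces downward on $k$ from $r$, exactly along the lines of \cite{Mazzeo;EdgeRev,Melrose;Book}.

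The main obstacle is \emph{global}. The exponential coordinate patches of \cite{Nistor;IntAlg'oid} are only uniformly well behaved near the diagonal and near each stratum separately, and ``extending along coordinate curves'' need not remain within a single chart; so the gluing step must be shown not to destroy the simultaneous exponential-and-polynomial bounds, and this is precisely where non-commutativity of the isotropy groups $\rG_k$ and higher codimension of the $\rM_k$ bite, since the transverse model operator is then an operator on a non-abelian nilpotent (or worse) Lie group whose inverse is no longer given by an elementary formula. A second difficulty, when $r>1$, is the nested structure: extending past $\bar\cG_k$ interacts with every deeper stratum $\bar\cG_{k+1},\dots,\bar\cG_r$, so one must carry along a compatible family of asymptotic expansions (the analogue of the corner structure in \cite{Melrose;Book,Mazzeo;EdgeRev}) and verify that all bounds are uniform across strata of differing depth. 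I expect that essentially all of the work lies in these two points --- global patching with uniform bounds, and inverting the non-abelian model operators --- which is why the statement is offered as a conjecture rather than proved in the generality of Theorem~\ref{SimpleExt}.
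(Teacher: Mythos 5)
The statement you are asked to prove is Conjecture \ref{MainConj}, which the paper explicitly leaves open: the text introduces it with ``Indeed we conjecture that'' and later lists proving it as a direction for future work. There is therefore no proof in the paper to compare against; the paper only establishes the (non-smooth) extension property for the special case $\cG = (\rM_0\times\rM_0)\bigsqcup \rG\times(\rM_1\times\rM_1)$ with $\rG$ nilpotent (Theorem \ref{SimpleExt}, via Propositions \ref{Proof1} and \ref{Proof2}). Your proposal is a strategy outline, and you candidly identify the obstructions (uniform global patching, non-abelian isotropy, the nested structure for $r>1$) that remain unresolved; as such it is not a proof, and the statement remains open after it.

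Beyond incompleteness, one substantive point needs correcting. You make ``invertibility of the transverse model operator'' the algebraic heart of each local step and invoke the Lauter--Nistor Fredholmness criterion to secure it. But the (smooth) extension property of Definition \ref{ExtDfn} is quantified over \emph{arbitrary} $D\in\Psi^{[m]}(\cG)$ (resp.\ $\varPsi\in\Psi^{[m]}(\cG)$) satisfying the compatibility relation on $\bar\cG_k$; no ellipticity, Fredholmness, or invertibility of restrictions to strata is assumed there. Indeed, in the paper's own proof of the special case no operator is inverted: the extension is built by cutting off $\varphi$ with the exponential charts and transporting it \emph{constantly} along the transverse coordinate curves (Equation (\ref{Part})), and the gain of a positive power of $\hat\rho$ in the error comes purely from the vanishing of the error on the stratum combined with the degeneracy estimates of Lemma \ref{LocalDegen}, Theorem \ref{DEst} and Lemma \ref{VGrowthEst}, not from solving a transverse indicial equation order by order. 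The model-operator inversions belong to the parametrix construction of Section 4 (Theorem \ref{CptLem1}, Lemma \ref{InftyParaLem}), which \emph{uses} the extension property as an input; importing them into the proof of the extension property both assumes hypotheses you do not have and circularly entangles the two steps. The genuine missing ingredients are the ones you name at the end, together with the smoothness of the extension across $\bar\cG_k$ (the paper does not obtain $\bar\varphi\in C^\infty(\cG)$ even in the special case), and none of them is supplied by your sketch.
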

In the following we shall assume that the groupoid $\cG$ satisfies the extension property.

\subsection{Inverse and parametrix when $\cG = \rM _0 \times \rM _0 \bigsqcup \rM _1 \times \rM _1 \times \rG $}
The main tool we use is the following estimate from Shubin:
\begin{lem}
\label{ZeroDecay}
{\em \cite{Shubin;BdGeom}}
Let $\rM $ be any manifold with bounded geometry. Fix $r > 0 $.
For any invertible, uniformly supported pseudo-differential operators $\varPsi $ of order $m$ on $\rM$,
$\varPsi ^{-1} $ is pseudo-differential operator of order $- m$,
where the distributional kernel $\varphi $ of $\varPsi ^{-1} $ satisfies the estimate 
\begin{equation}
\partial ^I _x \partial ^J _y \varphi (x, y) 
\leq M _{I J} (1 + d (x, y)^{m - |I| - |J| - \dim \rM }) e ^{- \varepsilon d (x , y)}, 
\end{equation}
for some $\varepsilon > 0 $ and for all multi-indices $I, J$ and all $(x, y) \in \rM \times \rM \setminus \rM $.
Moreover, if $m - \dim \rM > 0 $, then $\varphi \in C ^j (\rM \times \rM )$ for any $j < m - \dim \rM $.
\end{lem}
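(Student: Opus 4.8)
The plan is to reduce everything to the standard elliptic parametrix construction on a manifold with bounded geometry, combined with a Combes--Thomas / Agmon-type exponential-decay argument for the off-diagonal behaviour of $\varPsi^{-1}$. First I would recall that on a manifold $\rM$ with bounded geometry one has a uniform symbol calculus: $\varPsi$ elliptic and invertible means there is a uniformly supported parametrix $Q\in\Psi^{-m}$ with $\varPsi Q = \id - R_1$, $Q\varPsi = \id - R_2$, where $R_1,R_2$ are uniformly bounded smoothing operators whose kernels decay exponentially off the diagonal (this last point follows because the parametrix can be built with a kernel supported in a fixed tubular neighbourhood of the diagonal, so the $R_i$ are compactly supported in the ``off-diagonal distance'' variable and a fortiori exponentially decaying). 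Then $\varPsi^{-1} = Q + \varPsi^{-1}R_1 = Q + Q R_2 + \varPsi^{-1}R_1 R_2 = \cdots$, and iterating the Neumann-type identity $\varPsi^{-1} = Q(\id - R_2)^{-1}\cdot(\text{correction})$ one writes $\varPsi^{-1}$ as a finite sum of terms in $\Psi^{-m}$ plus a term of the form $\varPsi^{-1}\circ(\text{arbitrarily smoothing, exponentially-decaying kernel})$. So the polynomial-times-exponential estimate will come from two ingredients: the symbolic part $Q$, which by the standard stationary-phase/oscillatory-integral estimate on bounded geometry has kernel bounded by $M_{IJ}(1+d(x,y))^{m-|I|-|J|-\dim\rM}$ near the diagonal and is rapidly decreasing away from it; and the boundedness of $\varPsi^{-1}$ on $\cL^2$, which handles the smoothing remainder.

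The exponential decay factor $e^{-\varepsilon d(x,y)}$ is the genuinely non-formal part, and I expect it to be the main obstacle. The standard device is Combes--Thomas: conjugate $\varPsi$ by $e^{t f}$ where $f$ is a slowly varying function approximating $d(\cdot,y_0)$ with $|\nabla f|\le 1$; since $\varPsi$ has uniformly supported kernel of finite order, $e^{tf}\varPsi e^{-tf}-\varPsi$ is, for $|t|$ small, a bounded operator on $\cL^2(\rM)$ with norm $O(t)$, uniformly in $f$ and $y_0$; hence $e^{tf}\varPsi e^{-tf}$ remains invertible with uniformly bounded inverse for $|t|\le\varepsilon$ small. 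Testing $(e^{tf}\varPsi^{-1}e^{-tf})$ against bump functions near $x_0$ and $y_0$ and using $f(x_0)-f(y_0)\approx d(x_0,y_0)$ then yields $\|\varphi(\cdot,\cdot)\|\lesssim e^{-\varepsilon d(x_0,y_0)}$ in an $\cL^2$-averaged sense on unit balls. One upgrades this $\cL^2$ bound to the pointwise derivative bound by applying interior elliptic regularity for $\varPsi$ on balls of radius one (whose constants are uniform by bounded geometry) to the equation $\varPsi\varphi(\cdot,y)=0$ away from the diagonal, together with Sobolev embedding; the polynomial factor $(1+d)^{m-|I|-|J|-\dim\rM}$ is inherited from the symbolic parametrix term and is only relevant in the region $d(x,y)\lesssim 1$ where the exponential is bounded above and below.

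Finally, for the regularity statement when $m-\dim\rM>0$: once the off-diagonal estimates are established, continuity of $\varphi$ across the diagonal and the $C^j$ claim for $j<m-\dim\rM$ is the classical fact that a pseudo-differential operator of order $-m$ with $m>\dim\rM$ has kernel in $C^{j}$ for $j<m-\dim\rM$ (its symbol is integrable, and differentiating $j$ times brings down $j$ powers of the covariable, still integrable when $j<m-\dim\rM$); bounded geometry makes the relevant constants uniform, and the smoothing remainder term contributes a $C^\infty$ kernel. I would assemble these pieces in the order: (1) uniform parametrix with exponentially localized remainder; (2) pointwise symbolic estimate for $Q$ and its derivatives; (3) Combes--Thomas exponential decay of $\varPsi^{-1}$ in $\cL^2$-on-balls form; (4) bootstrap to pointwise bounds via uniform interior elliptic estimates; (5) the $C^j$ regularity across the diagonal. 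The expected main difficulty, step (3)–(4), is exactly where bounded geometry is used essentially, to make all constants independent of the base point.
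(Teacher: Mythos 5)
This lemma is not proved in the paper at all: it is quoted verbatim from the cited reference \cite{Shubin;BdGeom}, so there is no internal proof to compare against. Your outline --- uniform parametrix with exponentially localized remainder, a Combes--Thomas conjugation $e^{tf}\varPsi e^{-tf}$ with $|\nabla f|\le 1$ to get exponential $\cL^2$-decay of $\varPsi^{-1}$, a bootstrap via uniform interior elliptic estimates (valid because $\varPsi$ is uniformly supported, hence pseudolocal with finite propagation), and the standard symbol-integrability argument for the $C^j$ statement --- is exactly the standard route and is essentially the argument in Shubin's paper. The only imprecision worth flagging is that for $m\neq 0$ the perturbation $e^{tf}\varPsi e^{-tf}-\varPsi$ should be estimated as an operator from $\cW^m$ to $\cL^2$ (not on $\cL^2$), and the Neumann identity in your first paragraph should read $\varPsi^{-1}=Q+R_2Q+R_2\varPsi^{-1}R_1$; both are cosmetic and do not affect the validity of the plan.
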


The first step of our construction is to describe a compact parametrix of an elliptic operator.
\begin{thm}
\label{CptLem1}
Let $ \cG = \rM _0 \times \rM _0 \bigsqcup \rG \times \rM _1 \times \rM _1 ,$
Given any elliptic, groupoid differential operator 
$\varPsi = \{ \varPsi \} _{x \in \rM} \in \Psi ^{[m]} (\cG )$.
Suppose that for all $x \not \in \rM _0$, $\varPsi _x $ is invertible,
then its vector representation $\nu (\varPsi)$ is Fredholm.
Moreover, there exists $\varPhi \in \Psi ^{-[m]} (\cG) \oplus \Psi ^{- \infty} _{\bullet ; 0} (\cG) $ such that
$$ \id - \nu ( \varPsi ) \nu ( \varPhi ) : \mathcal L ^2 (\rM _0 ) \to \mathcal L ^2 (\rM _0 )$$ 
is compact.
\end{thm}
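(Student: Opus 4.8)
The plan is to build $\varPhi$ in two pieces matching the two strata of $\cG$, and then show the remainder is compact on $\mathcal L^2(\rM_0)$. First I would recall that ellipticity of $\varPsi$ together with the standard symbolic calculus on $\cG$ (as in \cite{NWX;GroupoidPdO}) produces a uniformly supported parametrix $\varPhi_0 \in \Psi^{-[m]}(\cG)$ with $\id - \varPsi\varPhi_0 =: R_0 \in \Psi^{-\infty}(\cG)$ uniformly supported. This already gives Fredholmness of $\nu(\varPsi)$ via the Lauter--Nistor criterion (Proposition in Section 2.3), since by hypothesis each $\varPsi_x$, $x \in \rM_1$, is invertible and $\varPsi$ is elliptic: indeed $\nu(\varPhi_0)$ is a two-sided inverse of $\nu(\varPsi)$ modulo $\nu(\Psi^{-\infty}(\cG))$, and the latter consists of uniformly supported smoothing operators on the manifold $\rM_0$ with bounded geometry, which need not be compact on $\mathcal L^2(\rM_0)$ — this is exactly the point the enlarged calculus is designed to repair.

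Next I would improve $\varPhi_0$ on the lowest stratum. Restricting to $\cG_1 = \rG \times \rM_1 \times \rM_1$, the operator $\varPsi|_{\cG_1}$ is, by right invariance and the product structure, an invertible uniformly supported pseudo-differential operator of order $m$ on the manifold with bounded geometry $\rG \times \rM_1$ (here one uses that $\rM_1$ is compact and $\rG$ is a Lie group, so $\cG_x$ for $x \in \rM_1$ has bounded geometry). By Lemma \ref{ZeroDecay} (Shubin), its inverse $(\varPsi|_{\cG_1})^{-1}$ is a pseudo-differential operator of order $-m$ whose kernel decays exponentially in the $\bs$-fiber distance, i.e. defines an element of $\Psi^{-[m]}(\cG_1) \oplus \Psi^{-\infty}_{\bullet;0}(\cG_1)$. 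Using the extension property (Theorem \ref{SimpleExt}), which applies precisely to a groupoid of this form with $\rG$ nilpotent, I extend the off-diagonal, order $-\infty$ part of this fiberwise inverse to a kernel $\varphi_{\mathrm{ext}} \in \Psi^{-\infty}_{\bullet;0}(\cG)$. Then I set $\varPhi := \varPhi_0 + (\text{correction built from } \varphi_{\mathrm{ext}})$, arranged so that $\varPhi \in \Psi^{-[m]}(\cG) \oplus \Psi^{-\infty}_{\bullet;0}(\cG)$ and $R := \id - \varPsi\varPhi$ restricts to $0$ on $\cG_1$ (one modifies $\varPhi$ by convoluting $R_0$ with the extended fiberwise inverse on $\cG_1$, using Lemma \ref{DiffIdeal} and Theorem \ref{CompoThm} to control the product in the calculus with bounds).

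Finally I would show $\nu(\varPsi)\nu(\varPhi) - \id$ is compact on $\mathcal L^2(\rM_0)$. Since $\varPhi \in \Psi^{-[m]}(\cG) \oplus \Psi^{-\infty}_{\bullet;0}(\cG)$ and $\varPsi \in \Psi^{[m]}(\cG)$, the product $\varPsi\varPhi - \id = -R$ lies in $\Psi^{-\infty}_{\bullet;0}(\cG)$ (or $\Psi^{-\infty}(\cG)$ plus such a term), and $R$ vanishes on $\cG_1$. The vector representation of a kernel in $\Psi^{-\infty}_{\bullet;0}(\cG)$ that vanishes on $\bar\cG_1$ is an operator on $\mathcal L^2(\rM_0)$ with a kernel that decays exponentially away from the diagonal \emph{and} vanishes as one approaches the boundary $\rM_1 = \partial \rM_0$; such an operator is a norm limit of operators with compactly supported (inside $\rM_0 \times \rM_0$) smooth kernels, hence compact. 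Concretely one writes $R$ as an increasing limit of $\chi_n R$ where $\chi_n$ are cutoffs supported away from $\rM_1$, estimates $\|\nu(R) - \nu(\chi_n R)\|$ using the exponential and boundary decay, and invokes that each $\nu(\chi_n R)$ is Hilbert--Schmidt on $\mathcal L^2(\rM_0)$ by polynomial volume growth.

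The main obstacle is the second step: carrying the fiberwise inverse on $\cG_1$, given abstractly by Shubin's theorem, to an honest element of the enlarged calculus on all of $\cG$ while keeping the composition estimates under control — this is where the extension property (and, technically, polynomial growth) is essential, and where the asymptotic-expansion bookkeeping of Definition \ref{FFDegen} has to be respected so that $R$ genuinely lands in $\Psi^{-\infty}_{\bullet;0}(\cG)$ rather than something larger.
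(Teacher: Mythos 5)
Your proposal follows essentially the same route as the paper: construct a uniformly supported parametrix $Q$, apply Shubin's estimate (Lemma \ref{ZeroDecay}) to the right-invariant inverse $(\varPsi|_{\cG _1})^{-1}$, use the extension property to lift the exponentially decaying correction $(\varPsi|_{\cG _1})^{-1} - Q|_{\cG _1}$ to a kernel $S \in \Psi^{-\infty}_{\bullet;0}(\cG)$, and set $\varPhi = Q + S$ so that $\id - \varPsi \varPhi$ vanishes on $\cG _1$. The only divergence is the final step, where the paper simply invokes the Lauter--Nistor result that such a remainder has compact vector representation, while you sketch a direct Hilbert--Schmidt approximation argument; that sketch would need care with the topology of $\cG$ near $\cG _1$, but the cited result covers it.
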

\begin{proof}
The proof of the theorem closely follows the proof of \cite[Theorem 3.21]{So;PhD}.

By standard arguments there exists $Q \in \Psi ^{[- m]} (\cG) $  such that
$$ \id - \varPsi Q , \id - Q \varPsi \in \varPsi ^{- \infty } (\cG ). $$
Observe that $( \varPsi |_{\cG _1 } )^{-1}$ is right invariant by uniqueness of the inverse operator.
By Lemma \ref{ZeroDecay}, one has 
$$( \varPsi |_{\cG _1 } )^{-1} \in \Psi ^{[- m]} (\cG _1 ) \oplus \Psi ^{- \infty } _\varepsilon (\cG _1), $$
for some $\varepsilon > 0 $.
It follows that 
$$ ( \varPsi |_{\cG _1 } )^{-1} - Q |_{\cG _1 } \in \Psi ^{- \infty } _\varepsilon (\cG _1). $$
The extension property guarantees that there exists $S \in \Psi ^{- \infty } _{\varepsilon ' ; 0} (\cG )$,
for some $\varepsilon ' > 0$, such that 
$$ S |_{\cG _1 } =  ( \varPsi |_{\cG _1 } )^{-1} - Q |_{\bar \cG _1 } .$$
Define
\begin{equation}
\label{SimplePara}
\varPhi := Q + S .
\end{equation}
Then $ \varPhi |_{\cG _1 } = ( \varPsi |_{\cG _1 } )^{-1} $. It follows that
$$ (\id - \varPsi \varPhi )|_{\cG _1} = \id - (\varPsi |_{\cG _1} ) (\varPhi |_{\cG _1} ) = 0 
= (\id - \varPhi \varPsi)|_{\cG _1}.$$
Therefore by \cite{Nistor;GeomOp}, $\id - \nu ( \varPsi ) \nu ( \varPhi )$ is compact.
\end{proof}

Assume further that $\cG$ is uniformly degenerate. 
Then one can improve the parametrix by considering the Neumann series, as in \cite{Mazzeo;EdgeRev}.

\begin{lem}
\label{InftyParaLem}
Let $\varPsi \in \Psi ^{[m]} (\cG )$ be as in Theorem \ref{CptLem1}.
Then there exists $ \varPhi ' \in \Psi ^{- \infty } _{\bullet ; 0} (\cG )$ such that
$$ \id - \varPsi (\varPhi + \varPhi ' ) \in \Psi ^{- \infty } _{\bullet ; \infty } (\cG ).$$
\end{lem}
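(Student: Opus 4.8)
The plan is to construct $\varPhi'$ by a Neumann-series argument, using the error term $R := \id - \varPsi\varPhi$ produced by Theorem \ref{CptLem1}. The first step is to record the properties of $R$: by construction $\varPhi|_{\cG_1} = (\varPsi|_{\cG_1})^{-1}$, so $R|_{\cG_1} = 0$, and since $\varPsi$ is a differential operator and $\varPhi \in \Psi^{-[m]}(\cG) \oplus \Psi^{-\infty}_{\bullet;0}(\cG)$, the error $R$ decomposes as $R = R_0 + R_\infty$ with $R_\infty \in \Psi^{-\infty}(\cG)$ uniformly supported (coming from the parametrix identity for $Q$) and $R_0 \in \Psi^{-\infty}_{\bullet;0}(\cG)$ coming from $\varPsi S$ via Lemma \ref{DiffIdeal}. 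The key point is that $R|_{\cG_1}=0$ combined with the smoothness of $R_0$ off $\bar\cG_1$ forces $R \in \Psi^{-\infty}_{\bullet;\lambda_1,\infty}(\cG)$ for some $\lambda_1 > 0$; i.e. the error already gains a positive power of $\hat\rho_1$. (Here $r = 1$, so there is only one degeneracy index.)

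Next I would set up the geometric series. Since $\cG$ is uniformly degenerate, Theorem \ref{CompoThm} shows that convolution of $\Psi^{-\infty}_{\varepsilon;\lambda}$ with $\Psi^{-\infty}_{\varepsilon;\lambda}$ lands in $\Psi^{-\infty}_{\varepsilon;2\lambda}$ with no loss in the exponential rate (because the $\omega_k$ can be taken arbitrarily small, cf. Equation (9)). Hence the iterated powers $R^{\circ N}$ lie in $\Psi^{-\infty}_{\bullet;N\lambda_1,\infty}(\cG)$ (or better), and since $N\lambda_1 \to \infty$, the formal sum $\sum_{N\geq 1} R^{\circ N}$ has partial sums whose tails lie in $\Psi^{-\infty}_{\bullet;\lambda_1 N,\infty}$. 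By the very definition of the asymptotic-expansion space $\Psi^{-\infty}_\varepsilon(\cG)$, this means there is a well-defined $T \in \Psi^{-\infty}_\bullet(\cG)$ with $T \sim \sum_{N\geq 1} R^{\circ N}$, and formally $(\id - R)(\id + T) = \id$ modulo $\Psi^{-\infty}_{\bullet;\infty}(\cG)$. One must check convergence is meaningful: the telescoping identity $\id - (\id+T)(\id - R) = \id - (\id+\sum_{N=1}^{N_0} R^{\circ N})(\id - R) - (\text{tail terms})$ shows the residual is in $\Psi^{-\infty}_{\bullet;\lambda_1(N_0+1),\infty}(\cG)$ for every $N_0$, hence in $\Psi^{-\infty}_{\bullet;\infty}(\cG)$.

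Then I would set $\varPhi' := (\varPhi + S') - \varPhi$ where more precisely one puts $\varPhi + \varPhi' := \varPhi \circ (\id + T) = \varPhi + \varPhi\circ T$, so $\varPhi' := \varPhi\circ T$. By Lemma \ref{DiffIdeal} and its adjoint (Equation (12)), $\varPhi\circ T \in \Psi^{-\infty}_{\bullet;0}(\cG)$ — the pseudo-differential factor $Q$ in $\varPhi$ composed with a kernel of order $-\infty$ stays of order $-\infty$, and the exponential/polynomial weights are inherited from $T$, which has at least the trivial weight $\bzero$. Finally one computes
$$ \id - \varPsi(\varPhi + \varPhi') = \id - \varPsi\varPhi\circ(\id + T) = \id - (\id - R)\circ(\id + T) \in \Psi^{-\infty}_{\bullet;\infty}(\cG), $$
as desired. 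The main obstacle I anticipate is the first step: verifying carefully that $R|_{\cG_1} = 0$ together with the structural decomposition of $R$ actually yields membership in $\Psi^{-\infty}_{\bullet;\lambda_1,\infty}(\cG)$ with a \emph{strictly positive} gain $\lambda_1$ — this is where one needs both the extension property (so that $S$, hence $\varPsi S$, has controlled behavior near $\cG_1$) and a Taylor-expansion-type argument showing that a smooth kernel vanishing on $\cG_1$ is divisible by a positive power of $\hat\rho_1$ with the required derivative estimates. The rest is a routine geometric-series bookkeeping once Theorem \ref{CompoThm} and Lemma \ref{DiffIdeal} are in hand.
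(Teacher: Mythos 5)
Your proposal follows essentially the same route as the paper: a Neumann series in $R := \id - \varPsi\varPhi$, using the fact that $R$ gains a strictly positive power $\lambda$ of $\hat\rho$ (which, as you correctly identify, comes from condition (ii) of the extension property in Definition \ref{ExtDfn}, not merely from $R|_{\cG_1}=0$), together with Theorem \ref{CompoThm} to get $R^{\circ N}\in\Psi^{-\infty}_{\bullet;N\lambda}$ and a telescoping identity to conclude. The one step you assert rather than prove is the existence of the asymptotic sum $T\sim\sum_{N\ge 1}R^{\circ N}$: this does \emph{not} follow ``by the very definition of the asymptotic-expansion space'' --- that definition only says what it means for a given kernel to admit an expansion; it does not produce one for an arbitrary formal series. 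This is the Borel-summation issue, and it is exactly what the paper's explicit construction handles: the partial sums are resummed as
$$ \varPhi'(a) := \sum_{k=1}^{\infty}\theta\bigl(k\lambda\hat\rho(a)\bigr)\,(\varphi\circ R^{k})(a), $$
with $\theta$ a cutoff equal to $1$ near $0$ of sufficiently small support, so that near $\cG_1$ (where $\hat\rho$ is small) the series of all covariant derivatives converges absolutely and uniformly thanks to the bound $\nabla^l(\varphi\circ R^k)\le M'_l M^{k\lambda}e^{-\varepsilon d(a,\bs(a))}\hat\rho^{k\lambda}$, while away from $\cG_1$ only finitely many terms survive. Note also that the paper sums $\varphi\circ R^k$ directly rather than first forming $T=\sum R^{\circ k}$ and then setting $\varPhi'=\varPhi\circ T$; the two are equivalent, but summing after composing with $\varphi$ lets one quote the decay estimates for $\varphi\circ R^k$ directly and avoids a separate appeal to Lemma \ref{DiffIdeal} for the infinite sum. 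With the cutoff construction supplied, your argument closes.
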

\begin{proof} 
Let $\varPhi $ be defined in Equation (\ref{SimplePara}), $\varphi$ be the reduced kernel of $\varPhi$.
Then Theorem \ref{CptLem1} implies that regarded as a kernel,
$$ R := \id - \psi \circ \varphi \in \Psi ^{- \infty } _{\varepsilon ; \lambda } (\cG ) ,$$
for some $\varepsilon > 0, \lambda > 0$. 
Using the arguments in the proof of Theorem \ref{CompoThm} repeatedly, one has for any $k \in \bbN$,
$$ \nabla ^l \varphi \circ R ^k (a) 
\leq M' _l M ^{k \lambda } e ^{ - \varepsilon d (a , \bs (a))} \hat \rho ^{k \lambda } ,$$
for some constants $M, M' _l $.
In particular, on an open neighborhood of $\cG _1 $ where $\hat \rho $ is sufficiently small,
$$ \sum _{k =1 } ^N k \tilde \nabla ^l (\tilde \bm ^* (\varphi \circ R ^k ))$$ 
converges uniformly and absolutely for all $l$.
Define $ \varPhi ' $ to be the limit 
\begin{equation}
\label{InftyPara}
\varPhi ' (a) := \sum _{k = 1} ^\infty \theta (k \lambda \hat \rho (a) ) (\varphi \circ R ^{ k } )(a),
\end{equation}
where $\theta \in C ^\infty (\bbR )$ is a function equal to $1$ on a neighborhood of $0$
with sufficiently small support.
Observe that $\varPhi ' \in \Psi ^{- \infty } _{\varepsilon ; 0} (\cG) $ 
since $\sum _{k} \theta (\hat \rho ) M' _l M ^{k \lambda } \hat \rho ^{k \lambda }$ converges absolutely and uniformly.
Moreover, for all $N = 1, 2, \cdots $,
$$ \varPhi ' - \sum _{k =1} ^N \varPhi R ^{k } \in \Psi ^{- \infty } _{\varepsilon ; N \lambda } (\cG).$$
Since
$\id - \varPsi \varPhi (\id + R + R ^2 + \cdots + R ^{N -1}) = R ^N$,
which lies in $\Psi ^{- \infty } _{\varepsilon ; N \lambda } (\cG)$ by Theorem \ref{CompoThm},
it follows that 
$$ \id - \varPsi (\varPhi + \varPhi ' ) \in \bigcap _{N \in \bbN } \Psi ^{- \infty } _{\varepsilon ; N \lambda } (\cG)
= \Psi ^{- \infty } _{\varepsilon ; \infty } (\cG).$$
\end{proof}

\begin{rem}
\label{InftyParaRem}
By similar arguments, 
one gets $\tilde \varPhi \in \Psi ^{[- m]} (\cG ) \oplus \Psi ^{- \infty } _\varepsilon (\cG ) $ such that
$$ \id - \tilde \varPhi \varPsi \in \Psi ^{- \infty } _{\varepsilon ; \infty } (\cG ).$$
\end{rem}

\begin{rem}
In the case of $\cG $ being uniformly non-degenerate, 
one may follow the arguments as in \cite{Mazzeo;EdgeRev}. 
Note in that case the author needs an extra step to modify the parametrix $\varPhi $ so that 
$( \id - \varPsi \varPhi ) ^N $ is well defined for all $N$.  
\end{rem}

\begin{thm}
\label{InvDecay}
Let $ \cG = \rM _0 \times \rM _0 \bigsqcup \rG \times \rM _1 \times \rM _1 ,$ 
be uniformly degenerate and with polynomial growth.
For any uniformly supported, elliptic operator $\varPsi \in \Psi ^{[m]} (\cG )$,
suppose that $\varPsi $ is invertible. 
Then $\varPsi ^{-1} \in \Psi ^{- [m]} _{\varepsilon } (\cG ) $ for some $\varepsilon > 0$.
\end{thm}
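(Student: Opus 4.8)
The plan is to bootstrap from the approximate inverse of Lemma \ref{InftyParaLem}, using genuine invertibility of $\varPsi$ only to invert inside the residual ideal $\Psi^{-\infty}_{\bullet;\infty}(\cG)$.

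First I would assemble a two-sided parametrix living in the calculus with bounds. Lemma \ref{InftyParaLem} provides $\varPhi\in\Psi^{-[m]}(\cG)\oplus\Psi^{-\infty}_\bullet(\cG)$ --- the Neumann construction (\ref{InftyPara}) already furnishing an asymptotic expansion for the smoothing part --- with $R:=\id-\varPsi\varPhi\in\Psi^{-\infty}_{\bullet;\infty}(\cG)$, and Remark \ref{InftyParaRem} gives $\tilde\varPhi$ with $\tilde R:=\id-\tilde\varPhi\varPsi\in\Psi^{-\infty}_{\bullet;\infty}(\cG)$. The identity $\tilde\varPhi-\varPhi=\tilde\varPhi R-\tilde R\varPhi$ combined with Lemma \ref{DiffIdeal} and Theorem \ref{CompoThm} gives $\tilde\varPhi-\varPhi\in\Psi^{-\infty}_{\bullet;\infty}(\cG)$, hence also $\varPhi\varPsi=\id-R'$ with $R'\in\Psi^{-\infty}_{\bullet;\infty}(\cG)$; in particular $R'|_{\cG_1}=0$. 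Since $\varPsi$ is invertible, once $\id-R'$ is invertible one has $\varPsi^{-1}=(\id-R')^{-1}\varPhi=\varPhi+S\varPhi$ where $S:=(\id-R')^{-1}-\id$, and because $S\varPhi\in\Psi^{-\infty}_{\bullet;\infty}(\cG)$ by Lemma \ref{DiffIdeal} and Theorem \ref{CompoThm}, it remains only to establish $S\in\Psi^{-\infty}_{\bullet;\infty}(\cG)$.

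So the real content is to invert $\id-R'$ inside the residual ideal. Over $\rM_1$ there is nothing to do, since $R'|_{\cG_1}=0$ makes $\id-R'_x=\id$ for $x\in\rM_1$. Over $\rM_0$, $\id-R'_0$ is a compact perturbation of the identity, hence Fredholm of index $0$; using that $\varPsi$ is onto, one may --- at the cost of adding to $\varPhi$ a finite-rank operator with smooth, exponentially decaying kernel, which lies in $\Psi^{-\infty}_{\bullet;\infty}(\cG)$ and disturbs none of the above --- arrange that $\id-R'$ is invertible. This is the place where full invertibility of $\varPsi$, and not merely Fredholmness, is used. By Shubin's Lemma \ref{ZeroDecay}, applied to the order-zero elliptic operator $\id-R'_0$ on the bounded-geometry manifold $\rM_0$, the inverse is $\id+S_0$ with $S_0$ smooth and exponentially off-diagonal decaying; right invariance promotes $\{(\id-R'_x)^{-1}\}$ to a kernel $\id+S$ on $\cG$ with $S$ exponentially decaying on the $\bs$-fibres and $S|_{\cG_1}=0$. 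To see that $S$ moreover vanishes to infinite order at $\cG_1$ with the $\hat\rho$-weights, I would use that $R'$ is $O(\hat\rho^N)$ in every $\tilde\nabla^l$-seminorm for all $N$: on a neighbourhood $\{\hat\rho<\delta\}$ of $\cG_1$ the geometric series $\sum_{k\ge 1}(R')^k$ converges absolutely in all these seminorms --- the estimate is the one already used in the proof of Lemma \ref{InftyParaLem} --- and its partial sums lie in $\Psi^{-\infty}_{\bullet;N\lambda}(\cG)$ for arbitrarily large $N$, so the sum, which must coincide with $S$ near $\cG_1$ by uniqueness of the bounded inverse, lies in $\Psi^{-\infty}_{\bullet;\infty}(\cG)$. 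Splicing this local series onto the global bounded inverse by a cutoff $\theta(\hat\rho)$ as in (\ref{InftyPara}) and re-estimating yields $S\in\Psi^{-\infty}_{\bullet;\infty}(\cG)$, whence $\varPsi^{-1}=\varPhi+S\varPhi\in\Psi^{-[m]}(\cG)\oplus\Psi^{-\infty}_\varepsilon(\cG)=\Psi^{-[m]}_\varepsilon(\cG)$ for $\varepsilon$ the minimum of the exponential rates produced.

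The hard part will be this last step: the naive global Neumann series for $(\id-R')^{-1}$ need not converge, since there is no reason for the $\cL^2(\rM_0)$-operator norm of $R'_0$ to be smaller than $1$, so one must marry the abstract invertibility coming from $\varPsi$ with the localized geometric series near the singular set and with Shubin's decay estimate on the regular part, and then track the $\hat\rho$-weights carefully through the splicing. The finite-rank adjustment making $\id-R'$ invertible is a minor but necessary technical wrinkle, and the passage from $\Psi^{-\infty}_{\bullet;0}$ to the asymptotic-expansion calculus for $\varPhi$ itself needs to be checked against the construction in the appendix.
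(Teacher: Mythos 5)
Your route --- inverting $\id - R'$ inside the residual ideal and writing $\varPsi^{-1}=(\id-R')^{-1}\varPhi$ --- is genuinely different from the paper's, and the step you flag as ``the hard part'' is exactly where it breaks down as written. First, Lemma \ref{ZeroDecay} is stated for \emph{uniformly supported} invertible operators, and $\id-R'_0$ is not uniformly supported ($R'|_{\cG_0}$ is a non-compactly supported, exponentially decaying smoothing kernel); to invert within the class ``identity plus exponentially decaying kernel'' you would need a separate spectral-invariance statement that is not among the tools established in the paper. Second, identifying $(\id-R')^{-1}-\id$ near $\cG_1$ with the cut-off geometric series ``by uniqueness of the bounded inverse'' is not a valid step: inversion does not localize, and the restriction of the inverse to a neighbourhood of $\cG_1$ is not determined by the restriction of $R'$ there. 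The repair is the exact remainder identity $(\id-R')^{-1}-\id-\sum_{k=1}^{N}(R')^{k}=(R')^{j}(\id-R')^{-1}(R')^{N+1-j}$ together with a Lemma \ref{KerReg}--type pairing estimate showing that a bounded operator sandwiched between two kernels of $\Psi^{-\infty}_{\bullet;\infty}(\cG)$ yields a bounded continuous kernel vanishing to the appropriate order in $\hat\rho$; this is precisely how the paper treats the analogous term $\tilde R|_{\cG_0}\,G\,\bar R|_{\cG_0}$ in Theorem \ref{GenInv}. Third, the finite-rank correction is circular as described: to realize a chosen finite-rank perturbation $G$ of $\id-R'$ as $F\varPsi$ you need $F=G\varPsi^{-1}$, the very object under construction (this can be rescued via the a-priori regularity of $\ker\varPhi$, as in the unnumbered lemma of Section 4.3, but it must be said), and the compactness of $\nu(R')$ for a non-uniformly supported kernel is itself an extension of the Lauter--Nistor criterion that needs justification.

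The paper sidesteps all of this by exploiting that $\varPsi|_{\cG_0}=\nu(\varPsi)$ \emph{is} uniformly supported and invertible on the bounded-geometry manifold $\rM_0$, so Shubin's Lemma \ref{ZeroDecay} applies to it directly and yields the inverse kernel $\phi$ with exponential off-diagonal decay --- no Neumann series for $(\id-R')^{-1}$, no finite-rank corrections. The weight information is then transferred by the exact identity $\phi=\bar\varphi+\phi\circ(\kappa|_{\cG_0})$ with $\kappa=\id-\varPsi\bar\varPhi\in\Psi^{-\infty}_{\bullet;\infty}(\cG)$: splitting $\phi\circ\kappa$ through an auxiliary high-order elliptic $\Delta$ and its parametrix, as in Lemma \ref{DiffIdeal}, reduces everything to Theorem \ref{CompoThm}, and the infinite-order vanishing at $\cG_1$ is inherited from $\kappa$ rather than re-derived from a localized inversion. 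If you wish to keep your architecture, the minimal fix is to replace the appeal to Lemma \ref{ZeroDecay} for $\id-R'_0$ by this bootstrap from the already-invertible $\nu(\varPsi)$.
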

\begin{proof}
Regard $\{ \varPsi _x \} _{ x \in \rM _0 }$ as a (pseudo-)differential operator on the manifold with bounded geometry 
$\rM _0 $, or in other words, a  kernel on $\rM _0 \times \rM _0 $.
Then Lemma \ref{ZeroDecay} again applies: $\varPsi _x ^{-1} $ is a uniform pseudo-differential operator on $\rM _0 $.
Let $\psi _0 $ be the kernel of $\varPsi _x , x \in \rM _0$, $\phi $ be the reduced kernel of $\varPsi _x ^{-1} $.

Let $\bar \varPhi := \varPhi + \varPhi '$ be defined in the previous lemma (Equation (\ref{InftyPara})) with reduced 
kernel $\bar \varphi $. 
Consider 
$$ \bar \varphi |_{\cG _0 } = \phi \circ ( \psi |_{\cG _0 } ) \circ ( \bar \varphi |_{\cG _0 })
= \phi - \phi \circ (\kappa |_{\cG _0} ),$$
where $\kappa $ is the reduced kernel of $ \id - \varPsi (\varPhi + \varPhi ' )$.

We use similar arguments as in Lemma \ref{DiffIdeal}.
Let $\Delta \in \Psi ^{[m']} (\cG )$ be any elliptic differential operator with order $m' > m + \dim \rM _0 $.
Let $Q \in \Psi ^{[- m']} (\cG )$ be a parametrix of $\Delta $, $S := \id - Q \Delta \in \Psi ^{[m']} (\cG )$.  
Then one can write
$$ \phi \circ (\kappa |_{\cG _0} ) 
= \phi ( S |_{\cG _0 } ) \circ \kappa + \phi (Q | _{\cG _0} ) \circ (\Delta \kappa ) |_{\cG _0 }.$$

Since $\kappa \in \Psi ^{- \infty } _{\varepsilon ; \infty } (\cG),
\Delta \kappa \in \Psi ^{- \infty } _{\varepsilon ; \infty } (\cG).$
On the other hand, $\phi (Q | _{\cG _0} )$ is a uniform pseudo-differential operator of order less that 
$ - \dim \rM _0 $.
Therefore by Lemma \ref{ZeroDecay} $ \phi (Q | _{\cG _0} )$ is bounded, continuous on $\rM _0 \times \rM _0$ 
and decays exponentially.
It follows that the proof of \ref{CompoThm} applies and 
$\phi (Q | _{\cG _0} ) \circ (\Delta \kappa ) |_{\cG _0 } \in \Psi ^{- \infty } _{\varepsilon ; \infty } (\cG) $ 
(extending the kernel to $\cG $ by $0$).
The same argument holds for $\phi ( S |_{\cG _0 } ) \circ \kappa $.
Hence we conclude that 
\begin{equation}
\label{InvRecursion}
\phi = \bar \varphi + \phi ( S |_{\cG _0 } ) \circ \kappa + \phi (Q | _{\cG _0} ) \circ (\Delta \kappa ) |_{\cG _0 }
\in \Psi ^{[- m]} (\cG ) \oplus \Psi ^{- \infty } _\varepsilon (\cG ).
\end{equation}
\end{proof}

\subsection{The generalized inverse}
Given a Fredholm operator $T$ on a Hilbert space, 
it is a standard fact that both the null space and co-kernel of $T$ are finite dimensional.
Moreover $T $ is invertible modulo projection onto its null space and co-kernel.
In this section, let $\cG$ be uniformly degenerate, $\varPsi \in \Psi ^{[m]} (\cG), m \geq 0 $ elliptic,
so that $\nu (\varPsi ) : \mathcal W ^m (\rM _0 ) \to \mathcal L ^2 (\rM _0 ) $ is Fredholm.
We describe the null space of $\nu (\varPsi )$.

Let $ G : \mathcal L ^2 (\rM _0 ) \to \mathcal W ^m (\rM _0 )$ be the generalized inverse of $\nu (\varPsi ) $.
In other words,
\begin{equation}
\id - G \nu (\varPsi ) = P _0 , \quad \id - \nu (\varPsi ) G = P _\bot ,
\end{equation}
where $P _\bot $ and $P _0 $ are the projection operators onto the null space and co-kernel of 
$\nu (\varPsi )$ respectively.

To describe $P _0 $ and $P _\bot$, we give an a-prior estimate of the null space of $\nu (\varPsi )$ 
(and that of co-kernel of $\nu (\varPsi )$) is similar.
\begin{lem}
Given any $f \in \cL ^2 (\rM _0 ) $ such that $\nu (\varPsi )( f ) = 0$.
Then for any $f |_{\rM _0 }$ is smooth and for any $X _1 , \cdots X _l \in \Gamma ^\infty (\rA ) , N \in \bbN,$
$\rho ^{- N} L _{\nu (X _1)} \circ \cdots \circ L_{\nu (X _l)} f $ is bounded.
\end{lem}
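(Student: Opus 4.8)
The plan is to reduce the lemma to the fixed-point identity $f = \nu(\mathcal R)f$ for a suitable kernel $\mathcal R$ of the calculus with bounds, and then to extract both the smoothness and the weighted estimates from the mapping properties established in Section 3.

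First I would build the parametrix. Since $\nu(\varPsi)$ is Fredholm and $\cG$ is a boundary (hence Lauter--Nistor) groupoid, the Fredholmness criterion forces every $\varPsi_x$, $x \in \rM \setminus \rM_0$, to be invertible; thus Lemma \ref{InftyParaLem} and Remark \ref{InftyParaRem} (together with their inductive extension to general $r$) provide $\tilde\varPhi \in \Psi^{[-m]}(\cG) \oplus \Psi^{-\infty}_{\varepsilon_0}(\cG)$ with $\mathcal R := \id - \tilde\varPhi\varPsi \in \Psi^{-\infty}_{\varepsilon_0;\infty}(\cG)$ for some $\varepsilon_0 > 0$. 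Applying the vector representation $\nu$, which is an algebra homomorphism sending $X \in \Gamma^\infty(\rA)$ to the anchored vector field $\nu(X)$, to the relation $\nu(\varPsi)f = 0$ yields $f = \nu(\mathcal R)f$. The reduced kernel of $\mathcal R$ restricts to a smooth function on $\cG_0 \cong \rM_0 \times \rM_0$ that decays exponentially off the diagonal, so $\nu(\mathcal R)$ maps $\cL^2(\rM_0)$ into $C^\infty(\rM_0)$, which gives the smoothness of $f|_{\rM_0}$. For the weighted derivative bounds, note that $L_{\nu(X_1)} \cdots L_{\nu(X_l)} f = \nu(\mathcal R')f$ with $\mathcal R' := X_1 \cdots X_l\, \mathcal R \in \Psi^{-\infty}_{\varepsilon_0;\infty}(\cG)$ (Lemma \ref{DiffIdeal}); writing $k$ for the reduced kernel of $\mathcal R'$ on $\rM_0 \times \rM_0$ and $\rho_1$ for the defining function of the singular set $\bar\rM_1 = \rM \setminus \rM_0$, it suffices to bound $\rho_1(x)^{-N} \int_{\rM_0} k(x,y) f(y)\, \mu(y)$ uniformly in $x$. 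Membership of $\mathcal R'$ in $\Psi^{-\infty}_{\varepsilon_0;\infty}(\cG)$ gives, for every $\lambda$, a constant $M_\lambda$ with $|k(x,y)| \leq M_\lambda e^{-\varepsilon' d(x,y)} \hat\rho_1(x,y)^\lambda$ for a fixed $\varepsilon' > 0$, the remaining weight factors $\hat\rho_j^{\lambda_j}$ ($j \geq 2$) being bounded and absorbed into $M_\lambda$.

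The heart of the argument --- and the step I expect to be the main obstacle --- is the contribution of the points $y$ with $\rho_1(y) > \rho_1(x)$ (those lying closer to the regular part than $x$ does), where $\hat\rho_1(x,y)^\lambda$ is not controlled by a power of $\rho_1(x)$. Here I would use Theorem \ref{DEst} in the form $\rho_1(y) \leq \rho_1(x)\, e^{\omega_1 d(x,y)}$, giving $\hat\rho_1(x,y)^N \leq 2^{N/2}\rho_1(x)^N e^{N\omega_1 d(x,y)}$; this extracts the required power $\rho_1(x)^N$ at the cost of the factor $e^{N\omega_1 d(x,y)}$. Since $\cG$ is uniformly degenerate, $\omega_1$ may be chosen arbitrarily small (replacing $\rho_1$ by a comparable defining function, which is changed only away from the singular set and hence does not affect the statement to be proved), so one may arrange $N\omega_1 < \varepsilon'$ and be left with a net exponential decay $e^{-(\varepsilon' - N\omega_1)d(x,y)}$. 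On the complementary region $\{\rho_1(y) \leq \rho_1(x)\}$ one has $\hat\rho_1(x,y)^N \leq 2^{N/2}\rho_1(x)^N$ outright. Combining the two, $\rho_1(x)^{-N}|k(x,y)| \leq C\, e^{-\delta d(x,y)}$ with $\delta := \varepsilon' - N\omega_1 > 0$.

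Finally, Cauchy--Schwarz together with the uniform bound $\int_{\rM_0} e^{-2\delta d(x,y)}\, \mu(y) \leq C$ --- valid because $\rM_0$ has bounded geometry and $\cG$ has polynomial growth --- gives $\rho_1(x)^{-N} \big| L_{\nu(X_1)} \cdots L_{\nu(X_l)} f(x) \big| \leq C \|f\|_{\cL^2(\rM_0)}$, uniformly in $x \in \rM_0$; when $x$ stays away from $\bar\rM_1$ the weight $\rho_1(x)^{-N}$ is itself bounded and the estimate follows directly from $f \in \cL^2$ and the exponential off-diagonal decay of $k$. Running this for all $l$ and all $X_1, \dots, X_l$ completes the proof.
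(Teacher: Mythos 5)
Your proposal is correct and follows essentially the same route as the paper: write $f=\nu(R)f$ with $R:=\id-\tilde\varPhi\varPsi\in\Psi^{-\infty}_{\bullet;\infty}(\cG)$, deduce smoothness from the smoothness of the kernel, set $S:=L_{X_1}\cdots L_{X_l}R\in\Psi^{-\infty}_{\varepsilon;\infty}(\cG)$, and control $\rho^{-N}\hat\rho^{N}$ by trading powers of $\rho(\bt(b))$ for a small fraction of the exponential decay via Theorem \ref{DEst} (with $\omega$ small by uniform degeneracy), finishing with Cauchy--Schwarz and polynomial growth. The only cosmetic difference is your explicit split into the regions $\rho_1(y)\gtrless\rho_1(x)$, which the paper handles in one line with the factor $(1+e^{\varepsilon d(a,b)/N})^{N/2}$.
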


\begin{proof}
Let $\tilde \varPhi ' $ be a parametrix of $\varPsi $ such that 
$R := \id - \tilde \varPhi \Psi \in \Psi ^{- \infty } _{\bullet ; \infty } (\cG )$,
as in Lemma \ref{InftyPara}.
Then $\nu (R ) f = f - \nu (\tilde \varPhi ) \nu (\varPsi ) f$.
Since $\nu (R ) f $ is smooth on $\rM _0 $, $f $ is smooth on $\rM _0 $.

Moreover, for any $X _1 , \cdots X _l \in \Gamma ^\infty (\rA ) , N \in \bbN,$
let $S := L _{X _1 } \circ \cdots \circ L_{ X _l } R $, 
then $S \in \Psi ^{- \infty } _{\varepsilon ; \infty } (\cG )$ for some $\varepsilon > 0$.
Therefore for any $N$, $a \in \cG$, 
\begin{align*}
|(\bt ^* \rho (a)) ^{- N} & S (\bt ^* f ) (a)| \\
\leq & (\bt ^* \rho (a)) ^{- N} M 
\int e ^{- \varepsilon d (a , b)} ( (\rho (\bt (a))) ^2 + (\rho (\bt (b)))^2 )^\frac{N}{2} (\bt ^* f ) (b) 
\mu _{\bs (a)} (b) \\
\leq & (\bt ^* \rho (a)) ^{- N} M' 
\int e ^{- \varepsilon d (a , b)} (1 + e ^{ \frac{\varepsilon d (a , b)}{N}})^{\frac {N}{2}}
(\bt ^* \rho (a)) ^N (\bt ^* f ) (b) \mu _{\bs (a)}(b),
\end{align*}
for some constants $M , M'$. 
Since by definition, 
we have $\cG _0 = \rM _0 \times \rM _0 $ and the $\bs$-fiber is equipped with the same Riemannian density as $\rM _0 $,
it follows from the polynomial growth of $\cG$ and Cauchy-Schwartz inequality that the integral above is bounded 
independent of $a$. 
Hence the claim.
\end{proof}

Define $ \mathcal S (\cG _0) \subset C ^\infty ( \cG _0 ) $ 
to be the space of Schwartz functions on $ \cG _0 $ with respect to $\rho $.
In other words $ \phi \in \mathcal S (\cG _0 ) $ if and only if for all $l, N, N' \in \bbN $,
\begin{align*}
\nabla ^l \phi &
(d \tilde \bm (X _1 ^\br \oplus Y _1 ^\br ) , \cdots , d \tilde \bm (X _l ^\br \oplus Y _l ^\br )) (a b ^{-1}) \\
& \leq M _{l ; N N '} (\bt ^* \rho (a b ^{-1} ) )^N (\bs ^* \rho (a b ^{-1} )) ^{N '}
\prod _{i=1 } ^l (|X |_i + |Y |_i ),
\end{align*}
for some constants $M _{l ; N N'} > 0$. 
Note that any functions on $\mathcal S (\cG _0 )$ extends to a smooth function on $\cG$ by 0.
With such identification, we have for any $\varepsilon > 0$,
$$\Psi ^{- \infty } _{\varepsilon ; \infty } (\cG ) \subset \mathcal S (\cG _0 ).$$

\begin{thm}
\label{GenInv}
There exists $\varTheta \in \Psi ^{[- m]} (\cG ) \oplus \mathcal S (\cG _0 ) $ 
such that $G = \nu (\varTheta ) $.
\end{thm}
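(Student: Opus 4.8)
The plan is to imitate the proof of Theorem \ref{InvDecay}, replacing the exact inverse on $\rM _0$ by the generalized inverse and taking care of the finite--dimensional null space and cokernel of $\nu (\varPsi)$ by the projections $P _0$ and $P _\bot$. Throughout we identify an operator on $\rM _0$ with its reduced kernel on $\cG _0 = \rM _0 \times \rM _0$, so that $G \nu (\varPsi) = \id - P _0$ and $\nu (\varPsi) G = \id - P _\bot$.

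First I would establish the regularity of these two projections. The Lemma preceding the theorem, applied to $\varPsi$, shows every $f \in \Ker \nu (\varPsi)$ is smooth on $\rM _0$ and $\rho ^{- N} L _{\nu (X _1)} \cdots L _{\nu (X _l)} f$ is bounded for all $N , l$ and all $X _i \in \Gamma ^\infty (\rA)$. Since $\varPsi ^*$ is again elliptic, uniformly supported and has $\varPsi ^* _x$ invertible for $x \notin \rM _0$ (so that $\nu (\varPsi) ^* = \nu (\varPsi ^*)$ is Fredholm), the same estimates hold for every element of $\Ker \nu (\varPsi) ^* = \mathrm{coker}\, \nu (\varPsi)$. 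Writing $P _0$ and $P _\bot$ as the finite sums of rank--one operators determined by orthonormal bases of these spaces, their reduced kernels are finite sums of products of rapidly ($\rho$--)decaying smooth functions on $\rM _0$, hence lie in $\mathcal S (\cG _0)$.

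Next, take the improved parametrices $\bar \varPhi , \tilde \varPhi \in \Psi ^{[- m]} (\cG) \oplus \Psi ^{- \infty} _{\bullet ; 0} (\cG)$ of Lemma \ref{InftyParaLem} and Remark \ref{InftyParaRem}, so that $\kappa _1 := \id - \varPsi \bar \varPhi$ and $\kappa _2 := \id - \tilde \varPhi \varPsi$ both lie in $\Psi ^{- \infty} _{\bullet ; \infty} (\cG) \subset \mathcal S (\cG _0)$. From $G \nu (\varPsi) = \id - P _0$ and $\varPsi \bar \varPhi = \id - \kappa _1$ one gets
$$ G = G \varPsi \bar \varPhi + G \kappa _1 = \bar \varPhi - P _0 \bar \varPhi + G \kappa _1 . $$
Here $P _0 \bar \varPhi$ is dealt with exactly as in the proof of Theorem \ref{InvDecay}: trading the conormal singularity of $\bar \varPhi$ against a parametrix of an auxiliary high--order elliptic operator, $P _0 \bar \varPhi$ becomes a composition of the $\mathcal S (\cG _0)$--kernel $P _0$ with a continuous, $\rho$--decaying kernel, whence $P _0 \bar \varPhi \in \mathcal S (\cG _0)$. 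For $G \kappa _1$ one uses $\varPsi (G \kappa _1) = (\id - P _\bot) \kappa _1$, so that the left parametrix yields $G \kappa _1 = \tilde \varPhi (\id - P _\bot) \kappa _1 + \kappa _2 G \kappa _1$; iterating, the $\kappa _2$--tail converges in the filtration of the calculus with bounds (each extra factor $\kappa _2 \in \Psi ^{- \infty} _{\bullet ; \infty} (\cG)$ improves the order by Theorem \ref{CompoThm}) and the leading terms lie in $\Psi ^{[- m]} (\cG) \oplus \mathcal S (\cG _0)$ by Lemma \ref{DiffIdeal}, the stability of $\mathcal S (\cG _0)$ under composition with $\Psi ^{[\bullet]} (\cG)$, and Step~1.

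What remains, and what I expect to be the main obstacle, is to re--express $\bar \varPhi$ itself, that is the genuinely non--compactly supported $\Psi ^{- \infty} _{\bullet ; 0} (\cG)$--component produced by the extension property and the Neumann series, as an element of $\Psi ^{[- m]} (\cG) \oplus \mathcal S (\cG _0)$ modulo terms already shown to be in $\mathcal S (\cG _0)$; here one cannot simply quote Lemma \ref{InftyParaLem}, whose parametrix lives a priori only in the larger space $\Psi ^{[- m]} (\cG) \oplus \Psi ^{- \infty} _{\bullet ; 0} (\cG)$. The approach is to run the a priori regularity argument of Step~1 uniformly along the whole construction: once every error kernel has been pushed into $\Psi ^{- \infty} _{\bullet ; \infty} (\cG) \subset \mathcal S (\cG _0)$, the smoothing content of $G$ in excess of a fixed uniformly supported parametrix of order $- m$ is forced to decay rapidly in $\rho$ near $\bar \cG _1$. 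The bookkeeping is controlled by the composition rules (Theorem \ref{CompoThm}, Lemma \ref{DiffIdeal}) and by $\Psi ^{- \infty} _{\bullet ; \infty} (\cG) \subset \mathcal S (\cG _0)$, which together make $\mathcal S (\cG _0)$ a two--sided ideal for the operations used; this yields $G = \nu (\varTheta)$ with $\varTheta \in \Psi ^{[- m]} (\cG) \oplus \mathcal S (\cG _0)$.
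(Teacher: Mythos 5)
Your overall architecture (use the parametrix identities $\varPsi\bar\varPhi=\id-\bar R$, $\tilde\varPhi\varPsi=\id-\tilde R$ together with $G\nu(\varPsi)=\id-P_0$, $\nu(\varPsi)G=\id-P_\bot$, show the projections are in $\mathcal S(\cG_0)$ via the a priori regularity lemma, and then iterate) is the same as the paper's, which derives the two--sided identity
$G=\tilde\varPhi|_{\cG_0}+\tilde R|_{\cG_0}\,G\,\bar R|_{\cG_0}+\tilde R|_{\cG_0}\bar\varPhi|_{\cG_0}-\tilde R|_{\cG_0}P_\bot\bar\varPhi|_{\cG_0}-\tilde\varPhi|_{\cG_0}P_0$
by computing $\tilde\varPhi\varPsi G$ and $G\varPsi\bar\varPhi$ in two ways. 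But there is a genuine gap at the decisive step: the treatment of the term in which $G$ itself appears. After one substitution you are left with $\kappa_2^{N} G\kappa_1$ (in the paper, $\tilde R|_{\cG_0}G\bar R|_{\cG_0}$), where $G$ is known only to be a \emph{bounded operator} on $\cL^2(\rM_0)$, not a kernel in any of the calculi. Your justification --- ``each extra factor $\kappa_2$ improves the order by Theorem \ref{CompoThm}'' --- does not apply: Theorem \ref{CompoThm} and Lemma \ref{DiffIdeal} are composition rules for kernels lying in the calculus with bounds, and $\kappa_2$ is already of order $\infty$ in the $\rho$--filtration, so there is nothing to improve and no sense in which the remainder $\kappa_2^N G\kappa_1$ tends to zero or the series converges. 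The remainder never leaves the picture, and nothing in your argument shows it is a kernel at all.

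The missing ingredient is exactly the paper's Lemma \ref{KerReg}: for $\kappa_1,\kappa_2\in\Psi^{-\infty}_{\varepsilon;\infty}(\cG)$ and \emph{any} bounded $T:\cL^2(\rM_0)\to\cL^2(\rM_0)$, the sandwiched operator has Schwartz kernel $F(x,y)=\int\kappa_1|_{\bt^{-1}(x)}(y_1)\,(T\kappa_2|_{\bs^{-1}(y)})(y_1)\,\mu_0(y_1)$, which is bounded and continuous; the proof uses only the operator norm of $T$, the pointwise exponential estimates on $\kappa_1,\kappa_2$, the polynomial growth of $\cG$, and Cauchy--Schwarz. With this in hand, one substitution of the identity shows $\tilde R\,G\,\bar R$ is a bounded continuous kernel, and \emph{further} substitutions (not a Neumann series) transfer derivatives and $\rho$--decay onto the outer factors $\tilde R,\bar R\in\Psi^{-\infty}_{\bullet;\infty}(\cG)\subset\mathcal S(\cG_0)$, upgrading continuity to membership in $\mathcal S(\cG_0)$. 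Your closing paragraph, which defers the remaining difficulty to ``running the a priori regularity argument uniformly along the whole construction,'' does not supply this; you need to prove (or at least state and use) the $\cL^2$--sandwich lemma to close the argument.
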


Let $ \kappa _1 , \kappa _2 $ be any two kernels in $ \Psi ^{- \infty } _{\varepsilon ; \infty } (\cG ) $.
Observe that the vector representations $\nu (\kappa _1 ) $ on $\cL ^2 (\rM _0 )$ 
is just convolution with $ \kappa _1 |_{\cG _0 }$.
Using the identification $\cG _0 \cong \rM _0 \times \rM _0 $,
one can write
$$ \nu (\kappa _1 ) f (x) = \int \kappa _1 |_{\cG _0 } (x, y) f (y) d y .$$
Observe that for any $y \in \rM _0 $,
$\kappa _1 | _{\bs ^{-1} (y )} = \kappa _1 (\cdot , y ) \in \cL ^2 (\rM _0 )$. 

Let $T : \cL ^2 (\rM _0 ) \to \cL ^2 (\rM _0 ) $ be any bounded linear map.
We claim that
\begin{lem}
\label{KerReg}
The function $F : \rM _0 \times \rM _0 \to \bbC $
$$ F(x, y ) := \int _{y _1 \in \rM _0 } 
\kappa _1 |_{ \bt ^{-1} (x) } (y _1 ) (T \kappa _2 | _{ \bs ^{-1} (y)}) (y _1 ) \mu _0 (y _1 )$$
is bounded and continuous.
\end{lem}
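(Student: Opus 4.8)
The plan is to show that $F$ extends to a bounded continuous function on all of $\rM_0 \times \rM_0$ by viewing $F(x,y)$ as an inner product in $\cL^2(\rM_0)$ and exploiting the Schwartz-type decay of $\kappa_1$ and $\kappa_2$ near $\bar\rM_1$. First I would rewrite
$$ F(x,y) = \big\langle \kappa_1|_{\bt^{-1}(x)} \, , \, \overline{T \kappa_2|_{\bs^{-1}(y)}} \big\rangle _{\cL^2(\rM_0)},$$
so that $|F(x,y)| \leq \| \kappa_1|_{\bt^{-1}(x)} \|_{\cL^2} \, \|T\| \, \|\kappa_2|_{\bs^{-1}(y)}\|_{\cL^2}$. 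Since $\kappa_1, \kappa_2 \in \Psi^{-\infty}_{\varepsilon;\infty}(\cG) \subset \mathcal S(\cG_0)$, for every $N$ we have pointwise bounds $|\kappa_1(z,x)| \leq M_N (\rho(x)\rho(z))^N e^{-\varepsilon d}$ on $\cG_0$ and likewise for $\kappa_2$; combined with the polynomial volume growth of $\cG$ (so that the fibrewise integrals of $e^{-\varepsilon' d}$ are uniformly finite) this gives $\| \kappa_1|_{\bt^{-1}(x)} \|_{\cL^2} \leq C_N \rho(x)^N$ and $\| \kappa_2|_{\bs^{-1}(y)} \|_{\cL^2} \leq C_N \rho(y)^N$, uniformly in $x,y$. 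Taking $N = 0$ already yields boundedness of $F$ on $\rM_0 \times \rM_0$, and taking $N \geq 1$ shows $F \to 0$ as $(x,y)$ approaches $\bar\rM_1 \times \rM_0$ or $\rM_0 \times \bar\rM_1$.

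For continuity, I would argue that $x \mapsto \kappa_1|_{\bt^{-1}(x)}$ and $y \mapsto \kappa_2|_{\bs^{-1}(y)}$ are continuous maps from $\rM_0$ into $\cL^2(\rM_0)$: the smoothness of $\kappa_i$ on $\cG_0$ together with the uniform Schwartz bounds (which control the tails of the $\cL^2$-norm uniformly in the base point) allows a dominated-convergence argument to upgrade pointwise convergence of the slices to $\cL^2$-norm convergence. Since $T$ is bounded, $y \mapsto T\kappa_2|_{\bs^{-1}(y)}$ is then also $\cL^2$-continuous, and $F$, being the composition of these continuous vector-valued maps with the (jointly continuous) inner product, is continuous on $\rM_0 \times \rM_0$. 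Finally, the decay estimates with $N \geq 1$ show that $F$ extends continuously by $0$ across $\bar\rM_1$, so $F$ is bounded and continuous on $\rM_0 \times \rM_0$ as claimed; if the statement is read as an assertion on the open dense part $\rM_0\times\rM_0$, the extension step is not even needed.

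The main obstacle I anticipate is the uniformity in the base point when passing from pointwise kernel bounds to $\cL^2$-norm bounds and, especially, to $\cL^2$-continuity of the slice maps: one must check that the polynomial growth hypothesis genuinely makes $\int_{\cG_x} e^{-\varepsilon' d(a,\bs(a))}\mu_x$ bounded independently of $x$ (this is exactly the mechanism already used in the proof of Lemma \ref{CompoLem}), and that the Schwartz decay near $\bar\rM_1$ is strong enough to dominate the convergence uniformly. Everything else is a routine application of Cauchy--Schwarz and dominated convergence, mirroring the estimates in Lemma \ref{CompoLem} and Theorem \ref{CompoThm}.
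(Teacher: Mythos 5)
Your proposal is correct and follows essentially the same route as the paper: both arguments view $F(x,y)$ as an $\cL^2(\rM_0)$ pairing, bound it by Cauchy--Schwarz together with the operator norm of $T$, and use the exponential decay of $\kappa_1,\kappa_2$ plus the polynomial volume growth to make the fibrewise $\cL^2$-norms finite uniformly in the base point. The only minor difference is in how the $\cL^2$-continuity of the slice maps is obtained: the paper derives a quantitative Lipschitz-type modulus of continuity from the $l=1$ derivative estimate in Definition \ref{FFDegen}, whereas you invoke dominated convergence from continuity plus the uniform decay --- both are valid.
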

\begin{proof}
With $l = 1$, and restrict to $\cG _0 \cong \rM _0 \times \rM _0 $,
Definition \ref{FFDegen} reduces to
$$ d \kappa _2 (x, y) (X \oplus Y ) \leq M _1 e ^{- \varepsilon _2 d (x , y)} (|X| + |Y|),$$
for any vector $X \oplus Y \in T _{(x, y)} (\rM _0 \times \rM _0 )$.
For any $y' \in \rM _0 $ such that $t _0 := d (y, y')$ is smaller than the injectivity radius of $\rM _0 $,
let $\gamma (t)$ be the unique geodesic joining $y $ and $y'$.
Then 
$$ |\kappa _2 (y _2 , y') - \kappa  _2 (y _2 , y) | \leq M _1 e ^{- \varepsilon _2 (d (y _2 , y) - t _0 )} t _0 .$$
Regard $\kappa _2 (\cdot , y' ) - \kappa _2 ( \cdot , y) $ as a function in $\cL ^2 (\rM _0 )$ for each $y , y'$.
By the boundedness of $T$, there is some $K > 0$ such that 
\begin{align*}
\| T _{y _2} (\kappa _2 (y _2 , y' ) - \kappa _2 (y _2 , y)) (\cdot ) \| ^2 _{\cL ^2 (\rM _0 )}
\leq & K ^2 \| (\kappa _2 (\cdot , y' ) - \kappa _2 ( \cdot , y))\| ^2 _{\cL ^2 (\rM _0 )} \\
= K ^2 \int |\kappa _2 (y _1 , y') - \kappa _2 (y _1 , y) |^2 \mu _0 (y _1 )
\leq & K ^2 M _1 ^2 t _0 ^2 e ^{2 \varepsilon _2 t _0 } \int e ^{- 2 \varepsilon _2 d (y _1 , y)} \mu _0 ( y _1 ).
\end{align*}
Since $\cG$ has polynomial growth by our assumption, 
the last integral $ \int e ^{- 2 \varepsilon _2 d (y _1 , y)} \mu _0 ( y _1 )$ is bounded by some constant $C _2$,
independent of $y$.
Similarly, 
$$ \int |\kappa _1 (x' , y _1 ) - \kappa _1 (x , y _1 ) |^2 \mu _0 (y _1 )
\leq M^{\prime 2} _1 t _0 ^2 e ^{2 \varepsilon _1 t _0 } C _1.$$
for some constants $M' _1 , C _1 > 0$. It follows that
\begin{align*} 
F (x' , y') - F (x, y) 
= & \int (\kappa _1 (x' , y _1 ) - \kappa _1 (x , y _1 ) )
( T _{y _2} (\kappa _2 (y _2 , y') - \kappa _1 (y _2 , y)))(y _1) \mu _0 (y _1) \\
&+ \int \kappa _1 (x , y _1 ) ( T _{y _2} (\kappa _2 (y _2 , y') - \kappa _1 (y _2 , y)))(y _1) \mu _0 (y _1) \\
&+ \int (\kappa _1 (x' , y _1 ) - \kappa _1 (x , y _1 ) ) (T _{y _2} \kappa _2 (y _2 , y))(y _1) \mu _0 (y _1) \\
\leq & ( M' _1 t _0 e ^{\varepsilon _1 t _0 } \sqrt {C _1 } )(M _1 t _0 e ^{\varepsilon _2 t _0 } \sqrt {C _2} )
+ \| \kappa _1 (x , \cdot ) \| _{\cL ^2 } (M _1 t _0 e ^{\varepsilon _2 t _0 } \sqrt {C _2} ) \\
&+ (M _1 ' t _0 e ^{\varepsilon _1 t _0 } \sqrt {C _1} ) \| \kappa _2 (\cdot , y) \| _{\cL ^2}.
\end{align*}
It is clear that given any $(x, y) \in \rM _0 \times \rM _0 $, the right hand side goes to 0 as $t _0 \to 0$.
Hence $F (x, y)$ is continuous.

The proof of the boundedness of $F$ is similar.
We have $ | \kappa _1 (x, y _1) | \leq M '_0 e ^{- \varepsilon _1 d (x , y _1)} $, 
and $ | \kappa _2 (y _2, y ) | \leq M _0 e ^{- \varepsilon _1 d (y _2 , y)} $,
for some constants $M_0 , M' _0 $. 
Therefore
\begin{align*}
|F (x, y)| \leq & \| \kappa _1 (x, \cdot ) \| _{\cL ^2 (\rM _0 )} 
\| T _{y _2} \kappa _2 (y _2 , y) \|_{\cL ^2 (\rM _0 )} \\
\leq & \| \kappa _1 (x, \cdot ) \| _{\cL ^2 (\rM _0 )} 
K \| \kappa _2 (\cdot , y) \|_{\cL ^2 (\rM _0 )} \leq K M '_0 M _0 \sqrt {C _1 C _2}.
\end{align*}
\end{proof}

We turn to the proof of Theorem \ref{GenInv}.
\begin{proof}
(See \cite[Theorem 4.20]{Mazzeo;EdgeRev})
Let $\bar \varPhi $ be defined in Equation (\ref{InftyPara}) and $\tilde \varPhi $ be as in Remark \ref{InftyParaRem}.
%Let $\bar \varphi , \tilde \varphi $ be the corresponding reduced kernels.
Also, denote
$$ \tilde R := \id - \tilde \varPhi \varPsi , \bar R := \id - \varPsi \bar \varPhi .$$
Computing $\tilde \varPhi \varPsi G$ and $G \varPsi \bar \varPhi $ in two different ways, 
one gets the equality
$$ G = \tilde \varPhi |_{\cG _0 } + \tilde R |_{\cG _0 } G - \tilde \varPhi |_{\cG _0 } \circ P _\bot
= \bar \varPhi |_{\cG _0 } + G  \tilde R |_{\cG _0 } - P _0 \tilde \varPhi |_{\cG _0 } .$$
Rearranging, one gets
\begin{equation}
\label{Recurrent2}
G = \tilde \varPhi |_{\cG _0} + \tilde R |_{\cG _0 } G \bar R |_{\cG _0 }
+ \tilde R |_{\cG _0 } \bar \varPhi |_{\cG _0 } 
- \tilde R |_{\cG _0 } P _\bot \bar \varPhi |_{\cG _0 }
- \tilde \varPhi |_{\cG _0 } P _0 .
\end{equation}
It is straightforward to see that 
$$ \tilde R |_{\cG _0 } P _\bot \bar \varPhi |_{\cG _0 } \text{ and }
\tilde \varPhi |_{\cG _0 } P _0 \in \mathcal S (\cG _0 ).$$
It remains to consider $\tilde R |_{\cG _0 } G \bar R |_{\cG _0 }$.
From Lemma \ref{KerReg}, 
it follows that $\tilde R |_{\cG _0 } G \bar R |_{\cG _0 }$ 
is given by convolution with some bounded continuous kernel $\phi$ on $\rM _0 \times \rM _0 $.
Using Equation (\ref{Recurrent2}) again, one gets
\begin{align*}
G =& \: \tilde \varPhi |_{\cG _0} + \tilde R |_{\cG _0 }
\big(\tilde \varPhi |_{\cG _0} + \tilde R |_{\cG _0 } G \bar R |_{\cG _0 }
+ \tilde R |_{\cG _0 } \bar \varPhi |_{\cG _0 } 
- \tilde R |_{\cG _0 } P _\bot \bar \varPhi |_{\cG _0 } 
- \tilde \varPhi |_{\cG _0 } P _0  \big) \bar R |_{\cG _0 } \\
&+ \tilde R |_{\cG _0 } \bar \varPhi |_{\cG _0 } 
- \tilde R |_{\cG _0 } P _\bot \bar \varPhi |_{\cG _0 }
- \tilde \varPhi |_{\cG _0 } P _0 .
\end{align*}
It is clear that $G - \tilde \varPhi |_{\cG _0} \in \mathcal S (\cG _0 )$. 
Hence $ G - \tilde \varPhi |_{\cG _0} = \varTheta _{- \infty } |_{\cG _0 } $ 
for some pseudo-differential operator $\varTheta _{- \infty}$ of order $- \infty $ on $\cG$.
Finally we conclude that
$ G = \nu ( \tilde \varPhi + \varTheta _{- \infty })$, 
and $\tilde \varPhi + \varTheta _{- \infty } \in \Psi ^{[- m]} (\cG ) \oplus \mathcal S (\cG _0 )$.
\end{proof}

\subsection{The general case}
To describe the inverse of a uniformly supported elliptic pseudo-differential operator on a general uniformly degenerate
boundary groupoid $\cG = \bigsqcup _{k = 0 } ^r \cG _k \times \rM _k \times \rM _k $,
one repeats the arguments of Lemma \ref{InftyParaLem} and Theorem \ref{InvDecay}.
More precisely, it suffices to prove that
\begin{thm}
Let $\cG = \bigsqcup _{k = 0 } ^r \rG _k \times \rM _k \times \rM _k $ be a uniformly degenerate boundary groupoid
with smooth extension property. 
Given a uniformly supported elliptic pseudo-differential operator $\varPsi = \{ \varPsi _x \} $ such that
$\varPsi |_{\cG _k } $ are invertible for all $k \geq 1 $.
Suppose that for some $r' \leq r $ there $r - r'$ kernels $\{ \varphi ^{(k)} \}$, 
$k = r', \cdots, r $, of the form
$ \varphi ^{(k)} \sim \sum _{i = 1} \varphi ^{(k)} _i ,$
such that 
$$\id - \psi \circ (\varphi + \varphi ^{(r')} + \cdots + \varphi ^{(r)} ) 
\in \Psi ^{- \infty } _{\bullet ; \bzero _{r '} , \infty } (\cG ),$$
where
$\varphi ^{(k)} _1 \in C ^\infty (\cG ) \bigcap \Psi ^{- \infty } _{\bullet ; \bzero _{k + 1} , \infty } (\cG )$, 
and $\varphi ^{(k)} _i \in \Psi ^{- \infty } _{\bullet ; \bzero _{k } , \lambda ^{(k)} _i , \infty } (\cG )$,
$\varphi ^{(k)} _i |_{\cG \setminus \bar \cG _k } \in C ^\infty (\cG \setminus \bar \cG _k )$,
for all $i \geq 2$.
Then
\begin{enumerate}
\item
$\varPsi ^{-1} |_{\bar \cG _k } \in \Psi ^{[- m ]} (\bar \cG _k ) \oplus \Psi ^{- \infty } _\bullet (\bar \cG _k)$;
\item
There exists $\varphi ^{(r' )} \sim \sum _{i = 1} \varphi ^{(r')} _i $
where $\sum _{i = 1} \varphi ^{(r' - 1)} _i$ satisfy the same smoothness and decaying conditions as above,
such that
$$\id - \psi \circ (\varphi + \varphi ^{(r' - 1)} + \cdots + \varphi ^{(r)} ) 
\in \Psi ^{- \infty } _{\bullet ; \bzero _{r' - 1} , \infty } (\cG ).$$
\end{enumerate}
\end{thm}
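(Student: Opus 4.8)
The plan is to run the induction step by transplanting the arguments of Lemma~\ref{InftyParaLem} and Theorem~\ref{InvDecay} from the stratum $\cG_1$ (resp.\ $\cG_0$) to $\cG_{r'}$: Shubin's estimate supplies the decay \emph{along} $\cG_{r'}$, while the smooth extension property propagates it \emph{off} $\cG_{r'}$.

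First I would fix the parametrix $\varPhi_{r'}$ with reduced kernel $\varphi+\varphi^{(r')}+\cdots+\varphi^{(r)}$ provided by the hypothesis, so that $\kappa_{r'}:=\id-\varPsi\varPhi_{r'}\in\Psi^{-\infty}_{\bullet;\bzero_{r'},\infty}(\cG)$; in particular $\varPsi\varPhi_{r'}$ agrees with $\id$ to infinite order at $\bar\cG_{r'+1}$. On the open stratum $\cG_{r'}=\rG_{r'}\times\rM_{r'}\times\rM_{r'}$ the operator $\varPsi|_{\cG_{r'}}$ is invertible by hypothesis, and each $\bs$-fibre $\cG_x$, $x\in\rM_{r'}$, is a manifold with bounded geometry; hence Lemma~\ref{ZeroDecay} applies fibrewise and, by uniqueness of the inverse, assembles into $(\varPsi|_{\cG_{r'}})^{-1}\in\Psi^{[-m]}(\cG_{r'})\oplus\Psi^{-\infty}_\varepsilon(\cG_{r'})$. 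Combining this with the inductively known $\varPsi^{-1}|_{\bar\cG_{r'+1}}\in\Psi^{[-m]}(\bar\cG_{r'+1})\oplus\Psi^{-\infty}_\bullet(\bar\cG_{r'+1})$ and bootstrapping through the identity $\varPsi^{-1}|_{\bar\cG_{r'}}=\varPhi_{r'}|_{\bar\cG_{r'}}+\varPsi^{-1}|_{\bar\cG_{r'}}\circ\kappa_{r'}|_{\bar\cG_{r'}}$ — in which the composition with the smoothing, infinitely flat kernel $\kappa_{r'}|_{\bar\cG_{r'}}$ is controlled by the auxiliary elliptic-operator device of Theorem~\ref{InvDecay}, Equation~(\ref{InvRecursion}), together with Lemma~\ref{DiffIdeal} and Theorem~\ref{CompoThm} — yields assertion~(i): $\varPsi^{-1}|_{\bar\cG_{r'}}\in\Psi^{[-m]}(\bar\cG_{r'})\oplus\Psi^{-\infty}_\bullet(\bar\cG_{r'})$. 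The same identity shows that $\varphi:=\varPsi^{-1}|_{\bar\cG_{r'}}-\varPhi_{r'}|_{\bar\cG_{r'}}\in\Psi^{-\infty}_{\bullet;\infty}(\bar\cG_{r'})$, so $\varPsi^{-1}|_{\bar\cG_{r'}}=\varPhi_{r'}|_{\bar\cG_{r'}}+\varphi$ is precisely the data required by the smooth extension property with $k=r'$ and $\kappa=\id$.

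Applying the smooth extension property then furnishes $\varphi^{(r'-1)}_1\in\Psi^{-\infty}_{\bullet;\bzero_{r'},\infty}(\cG)\cap C^\infty(\cG)$ extending $\varphi$ (which already matches the required type of a leading term), together with $R':=\id-\varPsi(\varPhi_{r'}+\varPhi^{(r'-1)}_1)\in\Psi^{-\infty}_{\bullet;\bzero_{r'-1},\lambda,\infty}(\cG)$ for some $\lambda>0$; note that $R'$ vanishes identically on $\bar\cG_{r'}$ (since $\varPhi_{r'}+\varPhi^{(r'-1)}_1$ restricts to $\varPsi^{-1}$ there) and is smooth on $\cG\setminus\bar\cG_{r'}$. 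I would then improve $R'$ exactly as in Lemma~\ref{InftyParaLem}: by Theorem~\ref{CompoThm}, $R'^{\,k}\in\Psi^{-\infty}_{\bullet;\bzero_{r'-1},k\lambda,\infty}(\cG)$, and inserting cut-off factors $\theta(k\lambda\hat\rho_{r'})$ as in Equation~(\ref{InftyPara}) makes
$$\sum_{k\ge 1}\theta(k\lambda\hat\rho_{r'})\,(\varPhi_{r'}+\varPhi^{(r'-1)}_1)\circ R'^{\,k}$$
converge, with all covariant derivatives, to a kernel whose $k$-th term $\varphi^{(r'-1)}_{k+1}$ lies in $\Psi^{-\infty}_{\bullet;\bzero_{r'-1},k\lambda,\infty}(\cG)$, is smooth off $\bar\cG_{r'}$ (because $R'$ is smoothing and smooth there), and has exponents $k\lambda\to\infty$. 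Setting $\varphi^{(r'-1)}\sim\sum_{i\ge 1}\varphi^{(r'-1)}_i$ one obtains
$$\id-\psi\circ(\varphi+\varphi^{(r'-1)}+\cdots+\varphi^{(r)})\in\bigcap_{N}\Psi^{-\infty}_{\bullet;\bzero_{r'-1},N\lambda,\infty}(\cG)=\Psi^{-\infty}_{\bullet;\bzero_{r'-1},\infty}(\cG),$$
which is assertion~(ii); the residual smoothness and decay checks on the $\varphi^{(r'-1)}_i$ use Lemma~\ref{DiffIdeal} and the fact that $R'$ is a smoothing kernel vanishing on $\bar\cG_{r'}$.

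I expect the genuine obstacle to lie neither in the extension step (which is hypothesized) nor in the Neumann series (routine, as in Lemma~\ref{InftyParaLem}), but in the filtration bookkeeping: every intermediate kernel must simultaneously retain the infinite flatness at $\bar\cG_{r'+1}$, acquire exactly the vanishing order $k\lambda$ at $\cG_{r'}$, introduce no spurious decay at the still-untreated strata $\cG_0,\dots,\cG_{r'-1}$, and keep the parameter $\varepsilon_3$ of Theorem~\ref{CompoThm} positive at every composition — which is precisely where uniform degeneracy (the freedom to take the $\omega_k$ arbitrarily small) is used. A second delicate point is the gluing in~(i): the fibrewise inverse is understood separately on $\cG_{r'}$ (via Shubin) and on $\bar\cG_{r'+1}$ (via the induction hypothesis), and merging the two into a single element of $\Psi^{[-m]}(\bar\cG_{r'})\oplus\Psi^{-\infty}_\bullet(\bar\cG_{r'})$ with uniform estimates across the junction where $\bar\cG_{r'+1}$ meets $\cG_{r'}$ requires rerunning the bootstrap~(\ref{InvRecursion}) in this new geometry rather than simply patching the two descriptions.
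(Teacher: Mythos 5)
Your proposal is correct and follows essentially the same route as the paper: claim (i) is obtained by running the bootstrap identity of Theorem \ref{InvDecay} (Equation (\ref{InvRecursion})) with the fibrewise inverse supplied by Lemma \ref{ZeroDecay}, which exhibits $\varPsi^{-1}|_{\bar\cG_{r'}}$ minus the given parametrix as an element of $\Psi^{-\infty}_{\bullet;\infty}(\bar\cG_{r'})$, and claim (ii) then follows by feeding this datum into the smooth extension property and improving the resulting error term with the Neumann-series argument of Lemma \ref{InftyParaLem}. The paper's own proof is just a terse pointer to these same ingredients in the same order, so your write-up is a faithful (and more detailed) rendering of it.
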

\begin{proof}
To prove claim (i), 
let $\phi _k $ be the kernel of $( \varPsi |_{\bar \cG _k } )^{-1}$
and consider $ \phi _k \circ (\psi  \circ(\varphi + \varphi ^{(r')} + \cdots + \varphi ^{(r)} ) )|_{\bar \cG _k}$
using the same arguments as Theorem \ref{InvDecay}. 
Moreover, by Equation (\ref{InvRecursion}), 
$$ \phi _k - (\varphi + \varphi ^{(r')} + \cdots + \varphi ^{(r)} )|_{\bar \cG _k}
\in \Psi ^{- \infty } _{\bullet ; \infty } (\bar \cG _k ).$$
Using the smooth extension property, let 
$\hat \varphi _{r' - 1} \in \Psi ^{- \infty } _{\bullet ; \bzero _{r' - 1 }, \infty } \bigcap C ^\infty (\cG )$
be such that
$$ \id - \psi \circ (\varphi + \hat \varphi _{r' - 1} + \varphi ^{(r')} + \cdots + \varphi ^{(r)} ) )
\in \Psi ^{- \infty } _{\bullet ; \bzero _{r' - 1}, \lambda , \infty } (\cG ). $$
Then the arguments of Lemma \ref{InftyParaLem} can be applied to prove (ii).
Note that the Neumann series is finite on compact subsets and hence the limit is in 
$C ^\infty (\cG \setminus \bar \cG _{k-1})$.
\end{proof}

%We conclude with comparing our result with \cite{Melrose;Book} and \cite{Nistor;Funct}.
%\begin{exam}
%Again take a collar neighborhood $[0 , 1 ) \times \rM _1$.
%Denote by $g _{\rM _0 } $ the metric induced on $\rM _0$ by $\cG _x , x \in \rM _0 $.
%Then for the manifold with boundary case,
%$$ d _{g _{\rM _0}} ( (\rho , x) , (\rho ' , x)) = |\log \rho - \log \rho '| ,$$
%up to some bounded quantity;
%while in the $c _n$ case,
%$$ d _{g _{\rM _0}} ( (\rho , x) , (\rho ' , x)) = | \rho ^{- n + 1} - (\rho ') ^{- n + 1} |.$$
%By Theorem \ref{InvDecay}, 
%\end{exam}

\section{Concluding remarks}
In this paper, we constructed a rather complete analogue of the big and full calculus to \cite{Mazzeo;EdgeRev}, 
namely, the exponentially decaying calculus and a finer space of kernels with asymptotic expansions.
We proved that these spaces are filtered like the full calculus, 
and contains the compact parametrices and generalized inverse of elliptic differential operators.

We remark that the definition of boundary groupoids and uniformly degenerate operators 
we considered is somewhat restricted.
For instance, it would seems to be rather obvious to generalize to the notion of boundary groupoids to contain
invariant sub-manifolds of the form $\rG _k \times \rM \times _{\mathrm B} \rM $.
Also, proving conjecture \ref{MainConj} would be a major advancement of the theory.

The full calculus constructed in this paper should enable one to re-write many classical results in the groupoid context.
On the more geometrical side, some construction had been exemplified in \cite{So;PhD}.
There, the author considers the heat kernel of generalized 
Laplacian operators and constructs renormalized index for the Bruhat sphere.
One should be able to generalize the results in \cite{So;PhD} with the framework constructed here.
In particular, the functions $\rho _k$ can be used as regularizing functions.
In the same vein, complex powers of elliptic operators, 
as well as holomorphic functional calculus of groupoid pseudo-differential operators, 
are also very interesting directions for future research. 

On the side of more traditional analysis, 
one would study boundary problems involving (vector representations of) groupoid differential operators,
or even non-linear equations.

\appendix
\section{Manifolds with bounded geometry}
In this section, 
we recall the definition of manifolds with bounded geometry and some classes of functions and operators defined it.
For details, see \cite{Shubin;BdGeom}.
\begin{dfn}
\label{UBDfn}
A Riemannian manifold $\rM$ is said to have bounded geometry if
\begin{enumerate}
\item
$ \rM $ has positive injectivity radius;
\item
The Riemannian curvature $R$ of $\rM $ has bounded covariant derivatives.
\end{enumerate}
\end{dfn}

\begin{lem}
\label{BallCover}
\cite[Lemma 1.2]{Shubin;BdGeom}
There exists $r _0 > 0$ such that for any $0 < r < r _0 $,
there is a countable set $\{ x_\alpha \} \subset \rM$ such that the balls 
$B (x_\alpha , \varepsilon )$ is a cover of $\rM$,
and any $x \in \rM$ belongs to at most $N$ balls $B (x_\alpha , 2 r ) $, for some $N$ independent of $x$.
\end{lem}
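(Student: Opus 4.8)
The plan is to build the cover from a maximal separated subset of $\rM$ and then bound its multiplicity by the uniform volume estimates that bounded geometry supplies. First I would take $r_0$ to be, say, one third of the global injectivity radius $r_{\mathrm{inj}} > 0$ furnished by Definition \ref{UBDfn}(i), fix $0 < r < r_0$, and use Zorn's lemma to choose $\{ x_\alpha \} \subset \rM$ maximal among subsets whose points are pairwise at distance $\geq r$. Maximality gives the covering property at once: if some $x \in \rM$ lay outside every $B(x_\alpha, r)$ then $\{ x_\alpha \} \cup \{ x \}$ would still be $r$-separated, a contradiction; hence the balls $B(x_\alpha, r)$ cover $\rM$. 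The index set is countable because the balls $B(x_\alpha, r/2)$ are pairwise disjoint and a manifold is separable, so it cannot contain uncountably many disjoint open balls of a fixed positive radius.

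Next I would fix $x \in \rM$ and let $I_x$ be the set of indices with $x \in B(x_\alpha, 2r)$. Every such $x_\alpha$ lies in $B(x, 2r)$, and since the $x_\alpha$ are $r$-separated the balls $B(x_\alpha, r/2)$, $\alpha \in I_x$, are pairwise disjoint and all contained in $B(x, 5r/2)$. Comparing Riemannian volumes,
\[
\#\, I_x \cdot \inf_{y \in \rM} \mathrm{vol}\, B(y, r/2) \;\leq\; \sum_{\alpha \in I_x} \mathrm{vol}\, B(x_\alpha, r/2) \;\leq\; \mathrm{vol}\, B(x, 5r/2) \;\leq\; \sup_{y \in \rM} \mathrm{vol}\, B(y, 5r/2),
\]
so it suffices to know that the left-hand infimum is strictly positive and the right-hand supremum is finite; then $N := \lfloor \sup_y \mathrm{vol}\, B(y,5r/2) / \inf_y \mathrm{vol}\, B(y, r/2) \rfloor$ works, independently of $x$.

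The main obstacle --- and the only step using the full hypothesis of bounded geometry --- is precisely these two uniform volume bounds. For the upper bound, bounded $|R|$ gives a lower Ricci bound, so the Bishop--Gromov comparison theorem bounds $\mathrm{vol}\, B(y, 5r/2)$ by the volume of a ball of the same radius in the constant-curvature model space, uniformly in $y$. For the lower bound, $r/2 < r_{\mathrm{inj}}$ makes $\exp_y$ a diffeomorphism on $B(0, r/2) \subset T_y \rM$, and an upper sectional curvature bound together with the Rauch comparison theorem (or a direct Jacobi field estimate) controls the metric distortion of $\exp_y$ from below, giving $\mathrm{vol}\, B(y, r/2) \geq c > 0$ with $c$ depending only on $\dim \rM$, $r$, and the curvature bound. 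Combining these comparison estimates with the packing argument above finishes the proof; for the explicit constants I would simply refer to \cite{Shubin;BdGeom}.
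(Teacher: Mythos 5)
Your proof is correct, and it is the standard argument (maximal $r$-separated set for the covering, packing plus uniform two-sided volume bounds from bounded geometry for the multiplicity) — which is essentially the proof in the cited source. Note that the paper itself gives no proof of this lemma; it is quoted directly from \cite{Shubin;BdGeom}, so there is nothing in the text to diverge from, and your argument supplies exactly the missing details (the only cosmetic mismatch is that the statement's $B(x_\alpha,\varepsilon)$ should read $B(x_\alpha,r)$, as your covering step implicitly corrects).
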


\begin{lem}
\label{BddPartition}
Let $\{ (B ( x _\alpha , \varepsilon ) , \bx _\alpha ) \} $ be a cover by normal coordinates patches,
such that the conclusion of Lemma \ref{BallCover} holds.
Then there exists a partition on unity $\theta _\alpha $ subordinated to $\{ B (x_\alpha , \varepsilon ) \}$,
such that for any $k \in \bbN$, 
all $k$-th order partial derivatives of $\theta _\alpha $ are bounded by some $C_k$, independent of $\alpha $.
\end{lem}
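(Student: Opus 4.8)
The plan is to obtain $\theta_\alpha$ in the classical way: pull back a single fixed bump function on $\bbR^n$ through each normal chart $\bx_\alpha$ and normalise, the only genuine issue being that every estimate must be uniform in $\alpha$. Concretely, I would first fix once and for all a function $\chi\in C^\infty_c(\bbR^n)$ with $0\le\chi\le 1$, $\chi\equiv 1$ on $B(0,\varepsilon')$ and $\Supp\chi\subset B(0,\varepsilon)$, where $\varepsilon'<\varepsilon$; after passing to a standard sub-selection of the given cover (e.g. replacing $\{x_\alpha\}$ by a maximal $\varepsilon'$-separated subset, which by maximality keeps $\{B(x_\alpha,\varepsilon')\}$ a cover of $\rM$ and preserves the finite-multiplicity conclusion of Lemma \ref{BallCover}) I may and do assume the smaller balls $B(x_\alpha,\varepsilon')$ still cover $\rM$. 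Then set $\tilde\theta_\alpha:=\chi\circ\bx_\alpha$, extended by $0$ outside $B(x_\alpha,\varepsilon)$, and $\Theta:=\sum_\alpha\tilde\theta_\alpha$.

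Next I would record the elementary structural facts: by the finite-multiplicity part of Lemma \ref{BallCover} the sum defining $\Theta$ is locally finite, with at most $N$ non-zero terms near any point; and since $y\in B(x_\alpha,\varepsilon')$ forces $|\bx_\alpha(y)|=d(x_\alpha,y)<\varepsilon'$, hence $\tilde\theta_\alpha(y)=1$, the cover property gives $\Theta\ge 1$ everywhere. Consequently $\theta_\alpha:=\tilde\theta_\alpha/\Theta$ is a well-defined smooth partition of unity with $\Supp\theta_\alpha\subset B(x_\alpha,\varepsilon)$.

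The substance of the proof is the uniform derivative bound, which I would carry out in a fixed normal chart $\bx_\beta$ (for an arbitrary $\beta$; this is the natural meaning of ``$k$-th order partial derivatives'' here). On the overlap, $\tilde\theta_\alpha$ reads $\chi\circ(\bx_\alpha\circ\bx_\beta^{-1})$, so by the higher-order chain rule (Fa\`a di Bruno's formula) its partial derivatives up to order $k$ are controlled by the fixed derivatives of $\chi$ together with the derivatives up to order $k$ of the transition map $\bx_\alpha\circ\bx_\beta^{-1}$. The key input from bounded geometry is precisely that these transition maps, with all their derivatives, are bounded uniformly in $\alpha,\beta$: in normal coordinates centred at any point the components of $g_\rM$ and of $g_\rM^{-1}$ and all their partial derivatives are bounded uniformly (a consequence of positive injectivity radius together with bounded covariant derivatives of the curvature, via the Jacobi equation), and the transition map is the coordinate expression of $\id_\rM$, hence relates the coordinate metrics of the two charts and so has jet controlled by theirs; see \cite{Shubin;BdGeom}. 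Granting this, $\tilde\theta_\alpha$ has $k$-th derivatives bounded by a constant depending only on $k$; summing the at most $N$ non-zero terms, so does $\Theta$; and since $\Theta\ge 1$, applying the chain rule to $t\mapsto 1/t$ gives uniformly bounded derivatives of $\Theta^{-1}$. The Leibniz rule applied to $\theta_\alpha=\tilde\theta_\alpha\cdot\Theta^{-1}$ then yields the asserted bound $C_k$, independent of $\alpha$ (and of $\beta$).

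The main obstacle is the uniform control of the change of normal coordinates used above; everything else is bookkeeping with the Leibniz and higher chain rules and the lower bound $\Theta\ge 1$. I would therefore either isolate that estimate as a short preliminary lemma or, more economically, cite it from \cite{Shubin;BdGeom}, and present the rest as the brief deduction sketched here. One cannot really sidestep it: even if one insists on measuring derivatives of $\theta_\alpha$ only in its own chart $\bx_\alpha$, the denominator $\Theta$ still couples $\bx_\alpha$ to the finitely many overlapping charts $\bx_\beta$, so the same transition-map bound reappears.
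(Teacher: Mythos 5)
The paper offers no proof of this lemma at all: like Lemma \ref{BallCover} immediately before it, it is quoted as a standard fact from \cite{Shubin;BdGeom} (it is essentially Shubin's Lemma 1.3), so there is no internal argument to compare against. Your construction is exactly the standard one used there --- pull back a fixed bump function through each normal chart, normalise by the locally finite sum $\Theta\ge 1$, and reduce the uniform derivative bounds to the uniform $C^k$-control of transition maps between normal charts, which is the genuine bounded-geometry input --- and the Fa\`a di Bruno/Leibniz bookkeeping you describe is correct.

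One small imprecision: your sub-selection step does not quite deliver what you claim. A maximal $\varepsilon'$-separated subset of the given centres $\{x_\alpha\}$ only guarantees that every \emph{original centre} lies within $\varepsilon'$ of the subset, so an arbitrary point of $\rM$ is only within $\varepsilon+\varepsilon'$ of it; the shrunken balls $B(x_\alpha,\varepsilon')$ around the selected centres therefore need not cover. (A maximal $\varepsilon'$-net of $\rM$ itself would cover, but then you are no longer subordinating to the given family.) The cleaner route, and the one matching the mixed $\varepsilon$/$2r$ phrasing of Lemma \ref{BallCover}, is not to shrink at all: take $\chi\equiv 1$ on $B(0,\varepsilon)$ with $\Supp\chi\subset B(0,2r)$, so that $\Theta\ge 1$ follows immediately from the covering by the $\varepsilon$-balls and the multiplicity bound applies to the supports $B(x_\alpha,2r)$; this is how the partition of unity is actually used in Equation (\ref{UBSobo}). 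With that adjustment the rest of your argument goes through unchanged.
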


For each $m \in \bbR$, define the 2-norms
\begin{equation}
\label{UBSobo}
\| f \| _{2, m} := \Big( \sum _{\alpha } 
\| \theta _\alpha f \| _{\cW ^m (U _\alpha ) } ^2 \Big)^{\frac{1}{2}},
\end{equation}
where $ \cW ^m (U _\alpha ) $ is the $m$-th Sobolev norm on $U _\alpha \subset \bbR ^n$.
We denote the completion of $C ^\infty _c (\rM _0) $ with respect to 
$\| \cdot \| _{2 , m}$ by $\cW ^{m} (\rM )$.

Observe that, since all transition functions are uniformly bounded, 
the equivalence classes of these norms are independent of the choices made. 

On a manifold with bounded geometry, a class of `uniformly bounded' pseudo-differential operators can also be defined.
Fix any covering $\{ U _\alpha , \bx _\alpha \}$ of $\rM$ by normal coordinates.
Let $\varPsi \in \psi ^m _\varrho (\rM)$.
Recall that $( \bx _\alpha ^{-1} )^* \psi \bx _\alpha ^* $ is a pseudo-differential operator on $U _\alpha $.
Let $\sigma _\alpha \in \mathbf S ^m (U _\alpha )$ be the total symbol of $( \bx _\alpha ^{-1} )^* \psi \bx _\alpha ^* $.
Then we say that 
\begin{dfn}
The pseudo-differential operator $\varPsi $ is {\it uniformly bounded} if 
\begin{enumerate}
\item
The support of $\varPsi $ is contained in the set
$$ \{ (x , y ) \in \rM \times \rM : d (x , y ) < r \} $$
for some $r > 0$;
\item
For any multi-indexes $I , J$,  
there exists a constant $C _{I J} $, independent of $\alpha $,
such that 
$$ | \partial _x ^I \partial _\zeta ^J \sigma _\alpha | \leq C _{I J} (1 + |\zeta |)^{m - |J|} .$$ 
\end{enumerate}
We denote the set of all, 
uniformly bounded pseudo-differential operators of order $\leq m$ by $\Psi ^m _b (\rM)$.
\end{dfn}

\section{Proof of Theorem \ref{SimpleExt}}
We consider the special case when $\cG = \rM _0 \times \rM _0 \bigsqcup \rG \times \rM _1 \times \rM _1 $.
Let $p = \dim \rG , q = \dim \rM _1 $.
For simplicity, we denote the only defining function by $\rho$.

\subsection{The exponential map}
First, recall the definition of admissible section and exponential map of a groupoid.
\begin{dfn}
An admissible section is a smooth map $S : \rM \to \cG $ such that
$ \bs \circ S = \id $ and $ \bt \circ S $ is a diffeomorphism on $\rM $.
\end{dfn}
One has a semi-group structure on the set of all admissible sections defined by
$$ S _1 S _2 (x) := S _1 (\bt \circ S _2 (x)) S _2 (x), $$
where the right hand side is the groupoid multiplication. 
Likewise, each admissible section $ S $ induces a diffeomorphism on $\cG $ given by
$$ a S := a S ((\bt \circ S ) ^{-1} (a)) .$$
It is easy to see that $ ( a S _1 ) S _2 = a (S _1 S _2) $ for any admissible sections $S _1 , S _2 $.

\begin{rem}
In the special case when $\cG = \rG $ is a Lie group,
$ Z \mapsto \exp Z (e) $ is just the Lie group exponential map.
\end{rem}

Given any smooth section $X \in \Gamma ^\infty (\rA)$,
denote by $X ^\br$ the right invariant vector field on $\cG $ with $\bs ^* X ^\br = 0 $ and $X ^\br |_\rM = X $.
Since $\rM $ is compact, 
it is standard that $X　^\br$ is a complete vector field on $\cG$,
hence one has a well defined map 
$$ \exp X : \rM \to \cG ,$$
given by the flow of $X ^\br $ form each $x \in \rM \subset \cG $.
It is a well known fact that $\bt \circ \exp X  $ equals the flow of 
$ \nu (X) $ on $\rM $ and hence is a $\exp X $ is an admissible section.
Define
$$ E _X := d \bt \circ d ( \exp X |_ \rA ) : \rA \to \rA .$$

We list some basic properties of the exponential map \cite{Nistor;IntAlg'oid}, \cite{Mackenzie;Book}:
\begin{enumerate}
\item
For any $X , Y \in C^\infty (\rA ), \exp X \exp Y = \exp Y \exp E _X $;
\item
For any $x \in \rM , ((\exp X )(x))^{-1} = \exp (- X) (E ^\nu _X (x)) $,
where $E ^\nu _X : \rM \to \rM $ is the flow of $\nu (X)$.
\end{enumerate}

\begin{nota}
For any collection of sections $Z _I = (Z_1 , \cdots Z _{|I|} ) \in \Gamma ^\infty (\rA )$, denote 
$$ \exp Z _I := \exp Z _{|I|} \exp Z _{|I| - 1} \cdots \exp Z _2 \exp Z _1 .$$
For any $\mu = (\mu _1 , \cdots , \mu _{|I|}) \in \bbR ^{|I|} $, denote
$$ \exp ( \mu \cdot Z _I )
:= \exp \mu _{|I|} Z _{|I|} \exp \mu _{|I| - 1} Z _{|I| - 1} \cdots \exp \mu _2 Z _2 \exp \mu _1 Z _1 .$$
\end{nota}

We adapt the construction of exponential coordinates charts on a groupoid in \cite{Nistor;IntAlg'oid} to our case.
\begin{lem}
\label{ExpCoord}
Let $Z _I \subset \Gamma ^\infty (\rA ) , U _\alpha $ coordinates patch of $\rM $.
Let $X ^{(\alpha )} _1 , \cdots , X ^{(\alpha )} _k $ be a local basis over $ ( \bt \circ \exp Z _I )(U _\alpha ) $.
Then there exists $\delta > 0 $ such that the map
$\bx ^{(\alpha )} _{Z _I } : ( - \delta , \delta ) ^k \times U _\alpha \to \cG $,
$$ \bx ^{(\alpha )} _{Z _I } (\tau , x)
:= \exp ( \tau \cdot ( X ^{(\alpha )} _1 , \cdots , X ^{(\alpha )} _k ) ) \exp Z _I (x), $$
is a diffeomorphism onto its image.
\end{lem}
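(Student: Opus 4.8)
The plan is to follow the standard inverse--function--theorem argument used to build exponential charts on a groupoid (as in \cite{Nistor;IntAlg'oid}): show that the differential of $\bx^{(\alpha)}_{Z_I}$ is a linear isomorphism at every point of $\{0\}\times U_\alpha$, promote this infinitesimal statement to a uniform local statement by a compactness argument that produces a single $\delta$, and finally upgrade the resulting local diffeomorphism to a diffeomorphism onto its image by checking global injectivity.

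First I would rewrite the map using the semigroup law for admissible sections. Setting $S_\tau := \exp(\tau\cdot(X^{(\alpha)}_1,\dots,X^{(\alpha)}_k))$ and $a(x) := \exp Z_I(x)$, one has $\bx^{(\alpha)}_{Z_I}(\tau,x) = S_\tau(\bt(a(x)))\,a(x)$, a point of the single source fibre $\cG_x$ since $\bs(a(x)) = x$. In particular $\bs\circ\bx^{(\alpha)}_{Z_I}(\tau,x) = x$ identically, and $\bx^{(\alpha)}_{Z_I}(0,x) = a(x) = \exp Z_I(x)$, because $S_0$ is the unit section and left/right multiplication by the unit section is the identity. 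Since $\exp Z_I$ is itself an admissible section, $\bt\circ\exp Z_I$ is a diffeomorphism of $\rM$ carrying $U_\alpha$ onto the open set $W_\alpha$ over which the frame $X^{(\alpha)}_i$ is defined; I will use this to transfer compactness between $U_\alpha$ and $W_\alpha$, and after shrinking $U_\alpha$ if necessary may assume $\overline{U_\alpha}$ compact with all data defined on a neighbourhood of it.

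Next I would compute $d\bx^{(\alpha)}_{Z_I}$ at a point $(0,x)$, writing $y := \bt(a(x))$. On the $\rM$-factor it equals $d(\exp Z_I)_x$, which is injective with image complementary to $\ker d\bs$ at $a(x)$, because postcomposing with $d\bs$ gives the identity of $T_x\rM$. On the $\bbR^k$-factor, at $\tau=0$ every factor of $S_\tau$ but one collapses to the unit section, so $\partial_{\tau_i}|_{\tau=0}S_\tau(y)$ is the right-invariant field $(X^{(\alpha)}_i)^{\br}$ evaluated at $\mathbf u(y)$, i.e. $X^{(\alpha)}_i(y)\in\rA_y$; after right translation by $a(x)$ the $i$-th derivative becomes $dR_{a(x)}\big(X^{(\alpha)}_i(y)\big)$. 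As $R_{a(x)}:\cG_y\to\cG_x$ is a diffeomorphism and the $X^{(\alpha)}_i(y)$ form a basis of $\rA_y = T_{\mathbf u(y)}\cG_y$, these $k$ vectors span the vertical space $\ker d\bs$ at $a(x)$. Since $k+\dim\rM = \dim\cG$, the full differential at $(0,x)$ is an isomorphism, and by continuity of the Jacobian and compactness of $\overline{U_\alpha}$ it stays invertible on $(-\delta,\delta)^k\times U_\alpha$ for $\delta$ small enough, so $\bx^{(\alpha)}_{Z_I}$ is a local diffeomorphism there.

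Finally, for global injectivity, suppose $\bx^{(\alpha)}_{Z_I}(\tau,x) = \bx^{(\alpha)}_{Z_I}(\tau',x')$. Applying $\bs$ forces $x=x'$; right-cancelling $a(x)$ inside $\cG_x$ then reduces the question to the injectivity of $\tau\mapsto S_\tau(y)$ on $(-\delta,\delta)^k$ for $y\in W_\alpha$. The differential computation already shows this map is a local diffeomorphism near $0$ for each fixed $y$, and since $\overline{W_\alpha}$ is compact (being diffeomorphic to $\overline{U_\alpha}$), after shrinking $\delta$ once more it is injective on $(-\delta,\delta)^k$ uniformly in $y$, whence the claim. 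I expect the only genuine subtlety to be these two uses of compactness needed to make $\delta$ independent of the base point --- one for the local--diffeomorphism step and one for the fibrewise injectivity --- which is why it is convenient to take the coordinate patches $U_\alpha$ relatively compact; the differential computation itself, although the heart of the argument, is routine once the identity $\bx^{(\alpha)}_{Z_I}(\tau,x) = S_\tau(\bt(a(x)))\,a(x)$ is in place.
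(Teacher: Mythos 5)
Your argument is correct and is exactly the standard inverse-function-theorem construction of exponential charts that the paper itself relies on by citing \cite{Nistor;IntAlg'oid}; indeed the paper states Lemma \ref{ExpCoord} without proof. Your computation of the differential at $\tau=0$ (the $\tau$-directions giving $dR_{a(x)}$ of a basis of $\rA_{\bt(a(x))}$, hence spanning $\ker d\bs$, and the $\rM$-directions giving a complement via $d(\exp Z_I)$), together with the two compactness steps to obtain a uniform $\delta$ and the use of $\bs$ plus right cancellation for global injectivity, fills this gap correctly.
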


\subsection{Exponential coordinates on $(\rM _0 \times \rM _0 ) \bigsqcup (\rG \times \rM _1 \times \rM _1 )$}
We turn to our special case when $\cG = (\rM _0 \times \rM _0 ) \bigsqcup (\rG \times \rM _1 \times \rM _1 )$.
First consider exponential coordinates on $\rG $.
\begin{lem}
\label{GCover}
Let $(Y _1 , \cdots , Y _p )$ be a fixed basis of $\mathfrak g $.
There exists a cover of $\rG $ by coordinates patches of the form 
$$ (- \delta , \delta ) ^p \ni \mu \mapsto \exp (\mu \cdot (Y _1 , \cdots , Y _p )) \exp (Z ^\rG _I )(e) ,$$
for some collections $Z ^\rG _I \in \mathfrak g $, such that
\begin{enumerate}
\item
The cover is locally finite with uniformly bounded index;
\item
$ |Z ^\rG _1 |, \cdots , | Z ^\rG _ {|I|} | \leq r _\fg $,
for some $r _\fg > 0$ (independent of $I$);
\item
There exist a constant $ C _\rG > 0 $ such that
$$ d (e , \exp Z ^\rG _I ) > C _\rG (|I| - 1), $$
where $d (\cdot , \cdot ) $ is the right invariant metric on $\rG$;
\end{enumerate}
\end{lem}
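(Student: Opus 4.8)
The plan is to fix a single exponential chart at the identity and to transport it along an $\varepsilon$-net whose points are, by construction, realised as geodesically efficient words in $\exp$.

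First I would fix $\delta>0$ small enough that $\mu\mapsto\exp(\mu\cdot(Y_1,\dots,Y_p))(e)$ is a diffeomorphism of $(-2\delta,2\delta)^p$ onto an open neighbourhood $W$ of $e$ in $\rG$; its differential at $0$ is $(\mu_i)\mapsto\sum_i\mu_iY_i$, which is an isomorphism since $(Y_i)$ is a basis, so this is just Lemma~\ref{ExpCoord} with unit space a point. As $W$ is open, $W\supseteq B(e,\varepsilon)$ for some $\varepsilon>0$, which I then shrink so that, in addition, $\varepsilon$ is less than the injectivity radius of $\rG$ and every $h\in\rG$ with $d(e,h)<\varepsilon$ can be written $h=\exp(Z)(e)$ with $|Z|<r_\fg$ for a fixed constant $r_\fg>0$: indeed $\rG$ with its right-invariant metric is a homogeneous, hence bounded-geometry, Riemannian manifold, and the Riemannian and Lie exponentials at $e$ agree to first order, so in Riemannian normal coordinates such an $h$ is of the form $\exp(Z)(e)$ with $|Z|$ comparable to $d(e,h)$. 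I would also record the elementary identities $d(ac,bc)=d(a,b)$ --- whence $d(a,b)=d(e,ba^{-1})$ and $B(g,\varepsilon)=B(e,\varepsilon)\,g$ --- and the fact that $t\mapsto\exp(tZ)(e)$ has length $|Z|$, because the right-invariant field $Z^\br$ has constant norm $|Z|$ for the right-invariant metric; in particular $d(e,\exp Z(e))\le|Z|$.

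Next I would take $\{g_\alpha\}$ to be a maximal $\varepsilon$-separated subset of $\rG$. Maximality makes it $\varepsilon$-dense, so the charts of Lemma~\ref{ExpCoord} centred at the $g_\alpha$ --- whose images are $W g_\alpha\supseteq B(e,\varepsilon)g_\alpha=B(g_\alpha,\varepsilon)$ --- cover $\rG$. Since $\rG$ is homogeneous, all metric balls of a given radius have the same volume, so comparing the pairwise disjoint balls $B(g_\alpha,\varepsilon/2)$ with a single ball of radius $R+\varepsilon/2$, where $R$ is chosen with $W\subseteq B(e,R)$, bounds the number of $\alpha$ with $x\in W g_\alpha$ by a constant $N$ independent of $x$; this is property~(i). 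For each $\alpha$ I would then choose a minimizing geodesic from $e$ to $g_\alpha$ and cut it into $m_\alpha:=\lceil d(e,g_\alpha)/\varepsilon\rceil$ arcs of length $\le\varepsilon$, with endpoints $e=x_0,x_1,\dots,x_{m_\alpha}=g_\alpha$. Since $d(x_{j-1},x_j)=d(e,x_jx_{j-1}^{-1})<\varepsilon$, the first paragraph supplies $Z^\rG_j\in\fg$ with $x_jx_{j-1}^{-1}=\exp(Z^\rG_j)(e)$ and $|Z^\rG_j|<r_\fg$; telescoping gives $g_\alpha=\exp(Z^\rG_{m_\alpha})(e)\cdots\exp(Z^\rG_1)(e)=\exp Z^\rG_{I_\alpha}(e)$ with $I_\alpha:=(Z^\rG_1,\dots,Z^\rG_{m_\alpha})$. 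This puts the chart at $g_\alpha$ into the required form and gives~(ii), while $m_\alpha=\lceil d(e,g_\alpha)/\varepsilon\rceil$ forces $d\big(e,\exp Z^\rG_{I_\alpha}(e)\big)=d(e,g_\alpha)>\varepsilon(m_\alpha-1)=\varepsilon(|I_\alpha|-1)$, which is~(iii) with $C_\rG:=\varepsilon$ (for $g_\alpha=e$ one takes the empty word and the inequality is trivial).

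The only substantive point is~(iii): the words representing the chart centres must be chosen so that their length is controlled linearly by the distance to $e$, which is exactly what the geodesic-subdivision step achieves; everything else is routine bounded-geometry bookkeeping. I expect the only real care to be needed in tracking the uniform constants $\delta,\varepsilon,r_\fg,R,N$ and the left/right-invariance conventions, rather than in any genuine difficulty.
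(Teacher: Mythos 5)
Your proposal is correct and follows essentially the same route as the paper: a maximal separated (equivalently, maximal packing) family of balls translated by right multiplication gives the cover and the uniform multiplicity bound via bounded geometry, and each centre is written as a word in $\exp$ by subdividing a minimizing geodesic from $e$ into arcs of uniformly bounded length, which yields (ii) and the linear lower bound (iii). The only cosmetic difference is that you use a single radius $\varepsilon$ for both the covering and the geodesic subdivision, whereas the paper separates these into $r$ and $C_\rG$; this changes nothing of substance.
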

\begin{proof}
For any $r > 0$, denote by $B _\rG (g, r) $ and $B _{\mathfrak g } (0 , r) $ 
the ball on $\rG $ (resp. $\mathfrak g $) of radius $r $ centered at $g \in \rG $ (resp. $0 \in \mathfrak g $).

Let $ r > 0 $ be such that
$ B (e , 2 r ) \subseteq 
\{ \exp (\mu \cdot ( Y _1 , \cdots , Y _p )(e)) : \mu \in ( - \delta , \delta ) ^p \} .$
Take a maximal collection of subset of the form 
$$ B _\rG ( g _i , r ) = \{ g g _i : g \in B _\rG (e , r ) \} .$$
Since $\rG $ is a manifold with bounded geometry, 
it is standard that $ \{ B _\rG ( g _i , 2 r ) \} $ is a covering satisfying condition (i), 
and hence the covering 
$$\{ \exp (\mu \cdot ( Y _1 , \cdots , Y _p )(e)) g _i : \mu \in ( - \delta , \delta ) ^p \}.$$

It remains to find for each $i$, $g _i = \exp Z ^\rG _{I _i } $, some $ Z ^\rG _ {I _i }$ satisfying condition (ii).
It is elementary that there exists a constant $ r _\fg > 0$ such that the exponential map
$$ \exp : B _{\mathfrak g } (0 , r _\fg ) \to \rG $$ 
is a diffeomorphism onto its image. 
Moreover, one has $ \exp  ( B _{\mathfrak g } (0 , r _\fg )) \supseteq B _\rG (e , C _\rG )$
for some constants $C _\rG > 0$.

Let $ \gamma (t) $ be a unit speed minimizing geodesic joining $e $ and $g _i $.
Parameterize $\gamma $ so that $ \gamma (0 ) = e , g (L) = g _i$.
Then $L = d (g _i , e ) $.
Define $g _l := \gamma ( C _\rG l) , l = 0 ,1 , \cdots L' $, 
where $ L' $ is the largest integer such that $C _\rG L' \leq L $.
Then $g = g g _{L' } ^{-1} g _{ L'} \cdots g _1 g _0 ^{-1} g _0 $.
By right invariance $g _I g _{L '} ^{-1} , g _l g _{l -1 } ^{-1} \in B _\rG (e , r _\fg) $ for any $l$.
Therefore by definition there exists $Z _l \in \mathfrak g , 1 \leq l \leq L' + 1$,
such that $|Z ^\rG _l| < r _\fg $ and 
$$\exp Z ^\rG _{L' + 1} = g g ^{-1} _{L ' } , \quad \exp Z ^\rG _l = g _l g _{l - 1} ^{-1} \quad \Forall l \leq L'.$$
Let $ Z ^\rG _{I _i } := \{ Z ^\rG _1 , \cdots , Z ^\rG _{L' + 1} \}$.
It is clear that the collection $Z ^\rG _{I _i} $ satisfies conditions (ii) and (iii).    
\end{proof}

Let $\{ Y ^{\mathfrak g} _1 , \cdots Y ^{\mathfrak g} _ p \}$ be an orthonormal basis of $\mathfrak g$.
Regard $\{ Y ^{\mathfrak g} _1 , \cdots Y ^{\mathfrak g} _ p \}$ as a basis of $\rM _1 \times \mathfrak g \to \rM _1$ .
Let $\{ Y _1 , \cdots Y _p \}$ be an extension of 
$\{ Y ^{\mathfrak g} _1 , \cdots Y ^{\mathfrak g} _ p \}$ to $\Gamma ^\infty (\rA )$, 
such that $\{ Y _1 , \cdots Y _p \}$ is a local orthonormal basis on some open set $ U \supset \rM _1$ of $\rM$.

\begin{lem}
\label{YCover}
%Let $\cG $ be uniformly degenerate.
Given any collection $ Z _I = (Z _1 , \cdots , Z _{|I|} ) \subset \Span _\bbR \{Y _1 , \cdots Y _p \} $,
$ |Z _m | < r _\fg $ for all $1 \leq m \leq |I| $.
For any $M > 0$, there exists $r > 0 $ such that 
$$ d (x , \bt \circ \exp Z _I (x) ) \leq M ,$$
whenever $x \in B (\rM _1 , e ^{- \omega r _\fg |I|} r ) $.
\end{lem}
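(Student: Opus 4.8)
The plan is to recast $\bt\circ\exp Z_I$ as a composition of time-one flows on $\rM$, to show via Lemma~\ref{LocalDegen} that these flows distort $\rho=\rho_1$ by a factor at most $e^{\omega r_\fg}$ per letter, so that starting within $e^{-\omega r_\fg|I|}r$ of $\rM_1$ confines the whole broken trajectory to a fixed tube $B(\rM_1,r_0)$, and then to estimate the length actually travelled using that each $\nu(Z_m)$ vanishes on $\rM_1$ and is therefore $O(\rho)$ there, which makes the total displacement a convergent geometric series.

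First I would write $\bt\circ\exp Z_I=\phi_{|I|}\circ\cdots\circ\phi_1$, where $\phi_m$ denotes the time-one flow of the complete vector field $\nu(Z_m)$ on $\rM$; this follows from $\bt\circ(S_1S_2)=(\bt\circ S_1)\circ(\bt\circ S_2)$ for admissible sections together with the fact that $\bt\circ\exp Z$ is the time-one flow of $\nu(Z)$. Put $y_0:=x$ and $y_m:=\phi_m(y_{m-1})$, and let $\gamma$ be the broken curve obtained by concatenating the flow segments $t\mapsto\phi_m^t(y_{m-1})$, $m=1,\dots,|I|$; then $\gamma$ runs from $x$ to $\bt\circ\exp Z_I(x)$, so $d(x,\bt\circ\exp Z_I(x))$ is at most the length of $\gamma$.

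Next I would fix $r_0>0$ small enough that, on $B(\rM_1,r_0)$: $\rho=d(\cdot,\rM_1)$; the inequality $|d\rho\circ\nu(X)|\le\omega\,\rho(x)\,|X|_{g_\rA}$ of Lemma~\ref{LocalDegen} holds (with $\omega=\omega_1$); $|Z_m|_{g_\rA}<r_\fg$ (shrinking $r_0$ so that $B(\rM_1,r_0)$ lies in the open set on which $Y_1,\dots,Y_p$ is orthonormal); and, because $\nu(Y_j)$ is smooth and vanishes on $\rM_1$, one has $|\nu(X)|_{\bar g}\le C\,\rho(x)\,|X|_{g_\rA}$ for all $X\in\rA_x$ with a fixed constant $C$. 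Along any part of $\gamma$ lying in $B(\rM_1,r_0)$ one then has $\big|\tfrac{d}{dt}\log\rho(\gamma(t))\big|\le\omega r_\fg$, hence $\rho$ multiplies by at most $e^{\omega r_\fg}$ along each of the $|I|$ segments; a short continuity (bootstrap) argument then shows that, provided $r\le r_0$ and $\rho(x)<e^{-\omega r_\fg|I|}r$, the whole of $\gamma$ stays inside $B(\rM_1,r)\subseteq B(\rM_1,r_0)$, and moreover $\rho\le e^{-\omega r_\fg(|I|-m)}r$ on the $m$-th segment.

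Finally, the length of the $m$-th segment is $\int_0^1|\nu(Z_m)(\phi_m^t(y_{m-1}))|_{\bar g}\,dt\le C r_\fg\sup_t\rho\le C r_\fg e^{-\omega r_\fg(|I|-m)}r$, so
\[
d\big(x,\bt\circ\exp Z_I(x)\big)\ \le\ C r_\fg r\sum_{m=1}^{|I|}e^{-\omega r_\fg(|I|-m)}\ \le\ \frac{C r_\fg}{1-e^{-\omega r_\fg}}\,r,
\]
a bound independent of $|I|$ and of the collection $Z_I$; choosing $r\le r_0$ so that this quantity does not exceed $M$ completes the proof. The one point requiring care is the bootstrap keeping the entire broken trajectory inside the neighbourhood on which all the local estimates are valid, uniformly in $|I|$; once that is in place, the rest is a Gr\"onwall-type estimate followed by summing a geometric series.
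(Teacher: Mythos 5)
Your proof is correct and follows essentially the same route as the paper: decompose $\bt\circ\exp Z_I$ into the time-one flows of the $\nu(Z_m)$, bound each segment's length by $O(\rho)$ using that $\nu$ vanishes on $\rM_1$, control the multiplicative growth of $\rho$ per segment via Lemma \ref{LocalDegen} (the paper invokes Theorem \ref{DEst} for this step, which you re-derive directly along the flow lines), and sum the resulting geometric series. Your explicit bootstrap keeping the broken trajectory inside the tube where the local estimates hold is a point the paper glosses over (it relies on the global modification of $\rho$ from Remark \ref{ModNote}), but the substance is identical.
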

\begin{proof}
Let $\rho $ be a smooth function on $\rM \setminus \rM _1 $, such that $\rho = d (\cdot , \rM _1 )$,
as in Lemma \ref{LocalDegen}. 
Since for any $Z \in \Span _\bbR \{Y _1 , \cdots Y _p \} $,
$\nu (Z) = 0 $ on $\rM _1 $ and $ | \nu (Z ) | $ is a Lipschitz function,
there exists $M' > 0 $ such that 
\begin{equation}
\label{LengthEst1}
|\nu (Z ) (x) | \leq M' \rho (x) ,
\end{equation}
for any $Z \in \Span _\bbR \{Y _1 , \cdots Y _p \} , |Z| \leq r _\fg .$

Write $x _0 := x, x _1 := \exp Z _{i - 1} \cdots \exp Z _1 (x), i =1, \cdots |I|$.
Then $x _{i -1 } , x _{i}$ is joined by the curve $(\bt \circ \exp t Z _i )(x _{i-1}), t \in [0, 1] $,
whose length is 
\begin{align*}
\int _0 ^1 |\nu (Z _i ) ((\bt \circ \exp t Z _i )(x _{i - 1})) | d t
\leq & \int _0 ^1 M' \rho ((\bt \circ \exp t Z _i )(x _{i - 1})) d t \\
\leq & \int _0 ^1 M' e ^{ \omega r _\fg (i + 1) } \rho (x) d t,
\end{align*}
where we used Theorem \ref{DEst} for the last inequality.
%\begin{equation}
%\label{LengthEst2}
%\rho (\bt (\exp t Z _i (x _i))) < e ^{ \omega r _\fg i } \rho (x),
%\end{equation}
%for any $x \in \rM , t \in [0, 1]$.
%It follows by combining Equations (\ref{LengthEst1}) and (\ref{LengthEst2}) that 
%$$ d (x _i , x _{i + 1 } ) \leq M' e ^{ \omega r _\fg i } \rho (x).$$
Hence by the triangular inequality 
$$ d (x , \bt \circ \exp Z _I (x) ) 
\leq \frac{M' e ^{ \omega r _\fg } (e ^{ \omega r _\fg |I|} - 1) \rho (x)}{e ^{ \omega r _\fg } - 1 }.$$
The claim then follows by putting $r _0 \leq \frac{M (e ^{ \omega r _\fg } - 1)}{M ' e ^{ \omega r _\fg }} $ 
and such that $\rho = d (\cdot , \rM _1 ) $ on $B ( \rM _1 , r _0 )$.
\end{proof}

Let $L > 0 $ be such that the injectivity radius of $\rM _1 $ is greater than $2 L $.
Then $\rM _1 $ can be covered by a finite collection of balls $\{ B _{\rM _1} ( x _\alpha , L ) \}$.
Let $\bx _{\rM _1} ^{(\alpha )}$ be a local coordinates chart of $ B _{\rM _1} ( x _\alpha , 2 L )$.
Fix a trivialization of $T \rM _1 ^\bot |_{ B _{\rM _1} ( x _\alpha , 2 L )}$ for each $\alpha $ 
and let $\tilde U ^\alpha $ be the coordinate patches 
$$ \exp _{\bx _{\rM _1} ^{( \alpha )} (x _1 , \cdots x _q)} (x _{p + 1} , \cdots , x _n ) ,$$ 
where $\exp$ here denotes the Riemannian exponential map.

Fix an orthonormal basis $\{ Y _1 , \cdots , Y _p \} $ of $\fg$.
Regard it as a basis of $ T \rM _1 \times \fg$ and extend to an orthonormal set of sections on 
$\rA |_{\bigcup \tilde U _\alpha }$. We still denote the extension by $\{ Y _1 , \cdots , Y _p \} $.
It is then a standard construction that there exists 
\begin{itemize}
\item
A finite set of collections of sections $ X _J \subset \Gamma ^\infty (\rA)$;
\item
for each $ \alpha , J $, a basis
$X ^{\alpha , J} _1 , \cdots , X ^{\alpha , J} _p \in \Gamma ^\infty ( \exp X _J (\tilde U _\alpha ) )$, 
\end{itemize}
such that
\begin{enumerate}
\item
the local coordinates
$ \tau \mapsto \exp (\tau \cdot (X ^{\alpha , J} _1 , \cdots , X ^{\alpha , J} _p )) \exp X _J (x) , $
$\tau \in (- \delta , \delta ) ^p , \\ x \in U _\alpha \bigcap \rM _1 $, is an atlas of $\rM _1 \times \rM _1 $;
\item
On $\exp X _J (\tilde U _\alpha )$, $\{ Y _1 , \cdots Y _p , X ^{\alpha , J} _1 , \cdots , X ^{\alpha , J} _q \}$
is an orthonormal basis of $\rA |_{\exp X _I (\tilde U _\alpha )} $.
\end{enumerate}
It follows that for some $r > 0$, the map 
$\bx ^\alpha _{Z _I , X _{J}} : (- \delta , \delta ) ^{p+q} 
\times (U _\alpha \bigcap B (\rM _1 , e ^{-\omega r _\fg |I|} r )) \to \cG $ defined by
\begin{align}
\label{ExpCover}
\bx ^{(\alpha )}_{X _{J} , Z _I　} & (\mu , \tau , x) \\ \nonumber
&　:= \exp (\mu \cdot ( Y _1 , \cdots , Y _p )) 
\exp (\tau \cdot (X ^{\alpha , I} _1 , \cdots , X ^{\alpha , I} _p )) \exp X _{J} \exp Z _I (x)
\end{align}
satisfies the conditions of Lemma \ref{ExpCoord} and hence defines a local coordinates patch in $\cG$.
Moreover, $\{ U ^\alpha _{X_J , Z_I } \bigcap (\rG \times \rM _1 \times \rM _1 ) \}$ is an atlas of 
$\rG \times \rM _1 \times \rM _1$.

Recall that $\tilde \cG := \{ (a , b) \in \cG \times \cG : \bs (a ) = \bs (b) \} $
and one has the map $\tilde \bm : \tilde \cG \to \cG $ defined by $\tilde \bm (a , b ) := a b ^{-1 } $.
Consider writing $d \tilde \bm (V ^\br \oplus W ^\br ) $ on the coordinates chart 
$ ( \bx ^{(\alpha )} _{X _{J} ,Z _I} , U ^{(\alpha )} _{X _{J} ,Z _I} ). $

By Equation (\ref{ReducedVF}), it is straightforward to compute for any $ a \in U ^{(\alpha )} _{X _{J} , Z _I}$,
$$ \partial _1 (a) = Y _1 ^\br (a), \partial _2 = ( E _{Y _1} Y _2 )^\br (a) , \cdots $$
and so on. It follows that on the coordinates chart
$$ \bx ^{(\alpha )} _{\emptyset} (\mu ' , \tau ' , x')
:= \exp (\tau \cdot (X ^{\alpha , I} _1 , \cdots , X ^{\alpha , I} _p )) 
( E _{Z _I } ^{-1} \circ E _{X _J} ^{-1} (x)), $$
if one writes 
$ V ^\br = \sum _{i = 1 } ^n v _i (x') \partial _{(\mu' , \tau ') _i}$
on $ U ^{(\alpha )} _{\emptyset}$ 
(Note that there is no $\partial _{x' _i} $ since $V ^\br $ is tangential to the $\bs$-fibers ),
then $V ^\br = \sum _{i = 1} ^n v _i (E _{X _J} E _{Z _I} (x)) 
\times \partial _{(\mu , \tau )_i }$ on $ U ^{(\alpha )} _{X _J , Z _I}$.

We turn to consider the case $d \tilde \bm (0 \oplus W ^\br ) $.
For any
$ a = \bx ^{(\alpha )} _{X _{J} ,Z _I} (\tau , \mu , x ) ,$
one has
\begin{align} 
\label{XInverse}
& d \tilde \bm (0 \oplus W ^\br ) (a) \\ \nonumber
=& \partial _t \Big|_{t = 0 } 
\exp (- t E _{\mu _1 Y _1 } \circ \cdots \circ E _{\mu _p Y _p } 
\circ E _{ \tau _1 X ^{(\alpha )} _1 } \circ \cdots \circ E _{ \tau _q X ^{(\alpha )} _q }
\circ E _{X _J} \circ E _{Z _1 } W ) \\ \nonumber
& \exp ( \tau \cdot ( X ^{(\alpha )} _1 , \cdots , X ^{(\alpha )} _q ) )\exp ( \mu \cdot (Y _1 , \cdots , Y _p ))
\exp {X _J} \exp {Z _I} \big(E ^\nu _{t W }(x) ) \big) \\ \nonumber
=& (E _{\mu _1 Y _1 } \circ \cdots \circ E _{\mu _p Y _p } 
\circ E _{ \tau _1 X ^{(\alpha )} _1 } \circ \cdots \circ E _{ \tau _q X ^{(\alpha )} _q }
\circ E _{X _J} \circ E _{Z _1 } W ) ^\br (a) \\ \nonumber
&+ d \big( \exp ( \tau \cdot ( X ^{(\alpha )} _1 , \cdots , X ^{(\alpha )} _q ) )
\exp ( \mu \cdot (Y _1 , \cdots , Y _p )) \exp _{X _J} \exp _{Z _I} \big) \nu (W)(\bs (a)).
\end{align}

To proceed, we estimate $E _{Z _I } W $.
\begin{lem}
\label{VGrowthEst}
There exists constants $K , r > 0 $, such that
for any $Z _I = \{ Z _1 , \cdots , Z _{I} \} \subset \Span _\bbR \{ Y _1 , \cdots Y _p \}, 
|Z _1| , \cdots |Z _{I}| \leq r _\fg$,
\begin{enumerate}
\item
$B (\rM _1 , r ) \subset \bigcup _{\alpha } U _{\alpha } $;
\item
For any $W \in \rA |_{B (\rM _1 , e ^{- \omega r _\fg |I|} r )}$
$ | E _{Z _{|I|} } \circ \cdots \circ E _{Z _1 } W | \leq K e ^{C (\log |I|)^2 } $,
\end{enumerate}  
where $\omega > 0 $ is such that Equation (\ref{LocalDegEq}) is satisfied on $B (\rM _1 , r)$.
\end{lem}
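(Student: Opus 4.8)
The plan is to follow, along the chain of flows generated by $\nu(Z_1),\dots,\nu(Z_{|I|})$, both the moving base point and the linear maps $E_{Z_i}$, using that near $\rM_1$ each $E_{Z_i}$ is governed by the adjoint action of the nilpotent group $\rG$. First I would control the base points and prove (i): setting $x_0$ to be the base point of $W$ and $x_j:=(\bt\circ\exp Z_j)\circ\cdots\circ(\bt\circ\exp Z_1)(x_0)$, the curve $t\mapsto(\bt\circ\exp tZ_j)(x_{j-1})$ lies on the $\bs$-fibre $\cG_{x_{j-1}}$, has length $|Z_j|\le r_\fg$, and joins the units over $x_{j-1}$ and $x_j$, so Theorem~\ref{DEst} gives $\rho(x_j)\le e^{\omega r_\fg}\rho(x_{j-1})$ with $\omega$ as in (\ref{LocalDegEq}) on $B(\rM_1,r)$ (enlarge $\omega$ if needed). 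Iterating, $\rho(x_j)\le e^{\omega r_\fg j}\rho(x_0)\le e^{-\omega r_\fg(|I|-j)}r$, so every $x_j$ lies in $B(\rM_1,r)$, which with Lemma~\ref{GCover}(i) is (i), and $\rho(x_j)$ decays geometrically as $j$ decreases; by Lemma~\ref{YCover} the orbit also stays in a set of diameter $\le M$.

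The technical heart is a one-step estimate. Using the exponential coordinate patches of Lemma~\ref{ExpCoord} and an orthonormal frame $\{Y_1,\dots,Y_p,X^{\alpha,I}_1,\dots\}$ of $\rA$ over $B(\rM_1,r)$ adapted to the $\rG\times\rM_1\times\rM_1$ structure, I would show that for $|Z|\le r_\fg$ and $x\in B(\rM_1,r)$ one can write $E_Z|_x=P(Z)+R_Z(x)$ in this frame, with $\|R_Z(x)\|\le C_0\,\rho(x)$, where $P(Z)$ is the restriction of $E_Z$ to $\rM_1$: a block matrix acting as $\mathrm{Ad}_{\rG}(\exp Z)$ on the $\fg$-directions and as the identity on the $T\rM_1$-directions, hence base-point independent up to the bounded transition factors among the finitely many charts needed (finitely many because the orbit has diameter $\le M$). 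Writing $T_j:=E_{Z_j}|_{x_{j-1}}\circ\cdots\circ E_{Z_1}|_{x_0}$ and $\Pi_j:=P(Z_j)\cdots P(Z_1)$, the factors $P(Z_i)$ telescope, so $\Pi_j$ is, up to those bounded factors, the adjoint action of $g_j:=\exp_\rG Z_j\cdots\exp_\rG Z_1$; since $\rG$ is nilpotent and $d_\rG(e,g_j)\le\sum_{i\le j}|Z_i|\le r_\fg j$, this representation is unipotent and $\|\Pi_j\|\le C_1(1+j)^{N_1}$ for a fixed $N_1$ depending only on the nilpotency class of $\rG$ and $\dim\rA$.

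I would then close by a dyadic recursion on $n:=|I|$. Let $G(n):=\sup\|T_n\|$ over all admissible data and split the product at $m:=\lceil n/2\rceil$. For the inner half $E_{Z_m}\circ\cdots\circ E_{Z_1}$ all base points lie within $e^{-\omega r_\fg\lfloor n/2\rfloor}r$ of $\rM_1$, so $\sum_{i\le m}\rho(x_{i-1})$ is exponentially small in $n$; feeding this together with $\|\Pi_j\|\le C_1(1+j)^{N_1}$ into the identity $T_j-\Pi_j=P(Z_j)(T_{j-1}-\Pi_{j-1})+R_{Z_j}(x_{j-1})T_{j-1}$ and summing gives $\|T_m-\Pi_m\|\le 1$ for $n$ large, hence $\|E_{Z_m}\circ\cdots\circ E_{Z_1}\|\le C_2(1+n)^{N_1}$. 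The outer half $E_{Z_n}\circ\cdots\circ E_{Z_{m+1}}$ is, after relabelling, admissible data of length $n-m\le\lceil n/2\rceil$, so its norm is $\le G(\lceil n/2\rceil)$. Thus $G(n)\le C_2(1+n)^{N_1}G(\lceil n/2\rceil)$, and iterating over the $\sim\log_2 n$ halvings yields $G(n)\le K\exp(C(\log n)^2)$; taking $W$ of unit norm this is (ii).

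The hard part, and the source of the tedium, will be the one-step estimate together with keeping the telescoping honest while the base point drifts: $E_{Z_i}$ must be compared to its restriction to $\rM_1$ at the \emph{current} shadow point — which produces the $O(\rho(x_{i-1}))$ error and nothing worse thanks to Theorem~\ref{DEst} — while the product of those restrictions must still collapse to a single $\mathrm{Ad}_{\rG}(g_j)$, which is exactly where the adapted frame for $\rA$ and the nilpotency of $\rG$ are indispensable. One must also choose $r$ small (and $\omega$ possibly larger than the constant of Lemma~\ref{LocalDegen}) so that the shrinking neighbourhoods $B(\rM_1,e^{-\omega r_\fg|I|}r)$ keep the corrections summable at every dyadic scale.
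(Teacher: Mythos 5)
Your proposal is correct and rests on exactly the same two pillars as the paper's proof: the decomposition of each $E_{Z}$ near $\rM_1$ into a base-point-independent ``model'' part (acting as $\Ad_{\exp Z}$ on the $\fg$-directions and as the identity on the $T\rM_1$-directions — the paper's $E^\alpha_Z$, your $P(Z)$) plus a remainder of size $O(\rho(x))$ (the paper's $F^\alpha_Z$, your $R_Z(x)$), combined with the polynomial bound $\|\Ad_{\exp Z_m}\circ\cdots\circ\Ad_{\exp Z_1}\|\leq K_0\,m^{N_\rG}$ from nilpotency and the geometric decay $\rho(x_j)\leq e^{-\omega r_\fg(|I|-j)}r$ supplied by Theorem~\ref{DEst}. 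The only genuine divergence is the combinatorial bookkeeping at the end: the paper expands the full product $\prod_j(E^\alpha_{Z_j}+F^\alpha_{Z_j})$ into $2^m$ terms, bounds a term with $m'$ remainder factors by $(K_2r)^{m'}e^{-\omega r_\fg(m'^2+m')/2}(K_1m^{N_\rG})^{m'+1}$, and splits the resulting sum at a threshold $N\sim\log m$, whereas you use the telescoped Duhamel identity $T_j-\Pi_j=P(Z_j)(T_{j-1}-\Pi_{j-1})+R_{Z_j}(x_{j-1})T_{j-1}$ plus a dyadic halving $G(n)\leq C(1+n)^{N_1}G(\lceil n/2\rceil)$. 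Both mechanisms make the error contribute a polynomial factor only $O(\log|I|)$ times, which is precisely where the $e^{C(\log|I|)^2}$ comes from; your recursion is arguably the cleaner route, and the one point to keep honest is that in unrolling the Duhamel identity the prefactor $P(Z_m)\cdots P(Z_{j+1})$ must be bounded by the \emph{collapsed} adjoint $C_1(1+m-j)^{N_1}$ (not by a product of single-step operator norms, which would be exponential in $m-j$), and that the outer half starts at $x_m$ with $\rho(x_m)\leq e^{-\omega r_\fg(n-m)}r$, so it really is admissible data of length $n-m$ — both of which you have in place.
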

\begin{proof}
Only estimate (ii) is not obvious.
For each $\alpha $,
define $P ^\alpha : U _\alpha \to \rM _1 \bigcap U _\alpha  $ to be the coordinates projection.
For each $x \in \tilde U _\alpha $, define $ T ^\alpha _a : \rA |_{\tilde U _\alpha } \to \rA _{x }$
to be the natural projection by identifying $\rA |_{\tilde U _\alpha } \cong \bbR ^{p + q} \times \tilde U _\alpha $,
using the basis $\{ Y _1 , \cdots Y _p , X ^{(\alpha)} _1 , \cdots , X ^{(\alpha )} _q \}$. 

Define the functions
$ E ^\alpha : \Span _\bbR \{ Y _1 , \cdots Y _p \} \times \rA |_{U _\alpha } \to \rA |_{\tilde U _\alpha } $,
$$ E ^{\alpha } _{Z } W 
:= T ^\alpha _{\Phi ^\nu _Z (x)} \circ E _Z \circ T ^\alpha _{P ^\alpha (x)} (W), \quad W \in \rA _x , $$
and $F ^\alpha := E - E ^\alpha $.

We analyze $E ^ \alpha $. 
%Suppose $ x _0 \in \rM $ be such that 
%\begin{align*}
%x _0 & \in U _ {\alpha _0} \\ 
%x _1 & := \Phi ^\nu _{Z _1 } (x _0 ) \in U _{\alpha _1 } \\
%& \cdots  \\
%x _{m } & := \Phi ^\nu _{Z _m } \circ \cdots \circ \Phi ^\nu _{Z _1 } (x _0 ) \in U _{\alpha _{m }} ,
%\end{align*}
%for some $\alpha _0 , \cdots , \alpha _{m }$.
Let $W _0 := W \in \rA _{x _0 } $ and $W _i := E ^{\alpha }_{Z _i } \circ \cdots \circ E ^{\alpha } _{Z _1 } W 
\in \rA _{x _i}$.
Define $P _{ \mathfrak g } : \rA |_{\rM _1} \to \mathfrak g $ to be the natural projection, then one has
\begin{equation}
\label{IteratedAd}
P _{\mathfrak g } ( T ^{\alpha }_{P _{\alpha }} (x _{i + 1 } ) E ^{ \alpha } (W _i ))
= \Ad _{\exp Z _i } ( P _{ \mathfrak g } (T ^{\alpha } _{P ^\alpha (x _i)} (W _i))).
\end{equation}
Iterating Equation (\ref{IteratedAd}), one gets
\begin{equation}
\label{IteratedAd2}
P _{\mathfrak g } ( T ^{\alpha } _{P _{\alpha } (x _{m} )}(W _m ))
= \Ad _{\exp Z _m} \circ \cdots \circ \Ad _{\exp Z _1} P _{\mathfrak g } 
( T ^{\alpha } _{P _{\alpha } (x _0 )}(W )).
\end{equation}
Since $ \{ X^ {(\alpha )} , Y \} $ are orthonormal bases, $T ^\alpha _x $ is an isometry for any $x $.
Hence, using Equation (\ref{IteratedAd2}) and the fact that $E _{Z _i } $ acts as identity on $T \rM _1 $,
one gets
\begin{align*}
| W _m | =& | E ^{\alpha }_{Z _i } \circ \cdots \circ E ^{\alpha } _{Z _1 } W | \\
=& \big( \big|\Ad _{\exp Z _m} \circ \cdots \circ \Ad _{\exp Z _1} P _{\mathfrak g } 
( T ^{\alpha } _{P _{\alpha } (x _0 )}(W )) \big|^2 
+ \big|(\id - P _{\mathfrak g }) 
( T ^{\alpha } _{P _{\alpha } (x _0 )}(W )) \big| ^2 \big)^{\frac{1}{2}} .
\end{align*}
Using the assumption that $\rG $ is nilpotent, one can find a constant $ N _\rG $ such that
$$ |\Ad _{\exp Z' _{|I|}} \circ \cdots \circ \Ad _{\exp Z' _1} | \leq K _0 |I| ^ {N _\rG} , $$
for any collection $Z' _I = \{ Z' _1 , \cdots , Z ' _{|I|} \} \subset B $.
Then it is clear that
\begin{equation}
\label{IteratedEst}
| E ^{\alpha }_{Z _m } \circ \cdots \circ E ^{\alpha } _{Z _1 } W |
\leq K _1 m ^{N _\rG } | W |.
\end{equation}

We turn to $F ^\alpha $. Observe that $F _Z ^\alpha W = 0$ for any $\alpha , W \in \rA |_{\rM _1 } $.
Regard $F _Z ^ \alpha $ as a matrix valued function on $U _\alpha \bigcap B ( \rM _1 , r )$.
Then the differentiability of $F _Z ^\alpha $ implies there exists $K _2 > 0 $ such that
$$ | F ^ \alpha _Z W | \leq K _2 d (x , \rM _1 ) |W| , 
\quad \Forall W \in \rA _x , x \in U _\alpha \bigcap B ( \rM _1 , r ) .$$ 
%It follows by continuity that there exists $r _F > 0 $ such that for any $\alpha $,
%$W \in \rA | _{U _\alpha \bigcap B ( \rM _1 , r _F )} $
%$$ | F ^\alpha _Z W | \leq K K _0 ^{-1} | W | .$$

Now we return to $ E _{ Z _m } \circ \cdots \circ E _{ Z _1 } W $.
We expand
\begin{align}
\label{BigSum}
E _{ Z _m } \circ \cdots & \circ E _{ Z _1 } W \\ \nonumber
=& E ^{\alpha }_{Z _m } \circ E^{\alpha }_{Z _{m -1} } \circ \cdots \circ E ^{\alpha } _{Z _1 } W
+ F ^{\alpha }_{Z _m } \circ E^{\alpha }_{Z _{m -1} } \circ \cdots \circ E ^{\alpha } _{Z _1 } W 
\\ \nonumber
&+ E ^{\alpha }_{Z _m } \circ F^{\alpha }_{Z _{m -1} } \circ \cdots \circ E ^{\alpha } _{Z _1 } W
+ F ^{\alpha }_{Z _m } \circ F^{\alpha }_{Z _{m -1} } \circ \cdots \circ E ^{\alpha } _{Z _1 } W 
\\ \nonumber
&+ \cdots 
+ F ^{\alpha }_{Z _m } \circ F^{\alpha }_{Z _{m -1} } \circ \cdots \circ F ^{\alpha } _{Z _1 } W .
\end{align}
From Equation (\ref{IteratedEst}) and our construction of $F ^\alpha $, 
it is straightforward to estimate that each term of the right hand side of Equation (\ref{BigSum}) is bounded by
$$ (K _2 r )^{m ' } e ^{- \frac{\omega r _\fg (m ^{\prime 2 } + m')}{2}} ( K _1 m ^{N _\rG } ) ^{m' + 1} ,$$
where $m' $ is the number of $F ^\alpha $.
One then adds all terms in the right hand side of Equation (\ref{BigSum}) together and gets the estimate
\begin{align*}
\label{SubExpSum}
\nonumber
| E _{Z _m } \circ \cdots \circ E _{Z _1 } W |
\leq & | W | \sum _{m ' = 0 } ^m \frac{m !}{m' ! (m - m')!} 
(K _2 r )^{m ' } e ^{- \frac{\omega r _\fg (m ^{\prime 2 } + m')}{2}} ( K _1 m ^{N _\rG } ) ^{m' + 1} \\
\leq & | W | \sum _{m ' = 0 } ^m m ^{m'} 
(K _2 r )^{m ' } e ^{- \frac{\omega r _\fg (m ^{\prime 2 } + m')}{2}} ( K _1 m ^{N _\rG } ) ^{m' + 1}.
%= & | W | \sum _{m ' = 0 } ^{N - 1} K _1 m ^{N _\rG + 1} (K _2 r _0 (\sqrt r) ^{m' + 1} K _1 m ^{N _\rG + 1} ) ^{m'} \\
%&+ | W | \sum _{m ' = N } ^m K _1 m ^{N _\rG + 1} (K _2 r _0 (\sqrt r) ^{m' + 1} K _1 m ^{N _\rG + 1} ) ^{m'} ,
\end{align*}
We split the above sum into two: the first form $m = 0 $ to $m' = N - 1$, and the second from $m ' = N $ to $m' = m$,
where $N$ is the smallest positive integer such that 
$$ (K _2 r ) e ^{- \frac{\omega r _\fg N }{2}} ( K _1 m ^{N _\rG + 1} ) < \frac{1}{2}, $$
in other words, $ N > \frac {2 \log (2 K _2 r K _1 m ^{ N _\rG + 1} )}{ \omega r _\fg }$.
Then one has
$$ | W | \sum _{m ' = N } ^m m ^{m'} 
K _1 m ^{N _\rG } ( K _2 r e ^{- \frac{\omega r _\fg (m ^{\prime } + 1)}{2}} K _1 m ^{N _\rG }) ^{m'} \\
\leq \frac{ |W| K _1 m ^{N _\rG } }{2 ^{N - 1 }},$$
by assumption. On the other hand, 
\begin{align*}
| W | \sum _{m ' = 0 } ^{N-1} & 
K _1 m ^{N _\rG } ( K _2 r e ^{- \frac{\omega r _\fg (m ^{\prime } + 1)}{2}} K _1 m ^{N _\rG + 1}) ^{m'} \\
\leq & | W | \sum _{m ' = 0 } ^{N-1} 
K _1 m ^{N _\rG } ( K _2 r e ^{- \frac{\omega r _\fg }{2}} K _1 m ^{N _\rG + 1}) ^{m'} \\
= & \frac{ |W| K_1 m ^{N _\rG } (( K _2 r e ^{- \frac{\omega r _\fg }{2}} K _1 m ^{N _\rG + 1}) ^N - 1)}
{( K _2 r e ^{- \frac{\omega r _\fg }{2}} K _1 m ^{N _\rG + 1}) - 1}.
\end{align*}
Observe that 
$$ m ^N \leq K _3 m ^{N' \log m } = K _3 e ^{N' (\log m )^2} ,$$
for some constants $K _3 , N'$. Therefore the estimation (ii) follows.
\end{proof}

\subsection{An exponentially decaying extension}
In this section, fix a coordinates cover as defined in Equation (\ref{ExpCover}). 
Let $\theta ^\rG _{Z _I } $ be a partition of unity of $\rG $ subordinated to $B ( \exp Z _I , r )$,
and $\theta ^\alpha _{X _{J}}$ be a partition of unity of $\rM _1 $ subordinated to $ U _\alpha \bigcap \rM _1 $.
Let $\theta \in C ^\infty _c (\bbR ) $ be such that $\chi $ equals 1 on $(- \infty , 1 )$ and 0 on $(2, \infty )$.

Given any $\psi \in \Psi ^\infty _\varepsilon (\cG _1 )$, 
define $ \theta ^\alpha _{X _{J} , Z _I} \in C ^\infty _c (U ^{(\alpha )} _{X _{J} , Z _I} )$ by
\begin{align*}
\theta ^\alpha _{X _{J} , Z _I} (\bx ^{(\alpha )} _{X _{J} , Z _I} (\tau , \mu ,  x))
:=& \; \theta ^\alpha _{X _{J}} (\exp ( \tau \cdot (X ^{\alpha , J} _1 , \cdots , X ^{\alpha , J} _p ))
\exp X _{I '}) (P ^\alpha (x)) \\
& \times \theta ^\rG _{Z _I} (\exp (\mu \cdot (Y _1 , \cdots , Y _p ) \exp Z _I (e))(P ^\alpha (x)) \\
& \times \theta (2 e ^{\omega r _\fg |I|} r ^{-1} \rho (x )).
\end{align*}
Here, recall that $P ^\alpha : U ^\alpha \to U ^\alpha \bigcap \rM _1 $ is the coordinates projection. 
Given any $\psi \in \Psi ^\infty _{\varepsilon ; \bzero } (\cG _1 ) , \varepsilon > 0$, let
\begin{equation}
\label{Part}
\bar \psi := \sum  _{X _{J} , Z _I} \psi ^{(\alpha )} _{X _{J} , Z _I} ,
\end{equation}
where $\psi ^{(\alpha )} _{X _{J} , Z _I} \in C ^\infty _c (U ^{(\alpha )} _{X _{J} , Z _I} )$ is defined by
$$ \psi ^{(\alpha )} _{X _{J} , Z _I} (\bx ^{(\alpha )} _{X _{J} , Z _I} (\tau , \mu , x))
:= \theta ^\alpha _{X _{J} , Z _I} (\bx ^{(\alpha )} _{X _{J} , Z _I} (\tau , \mu , x))
\psi (\bx ^{(\alpha )} _{X _{J} , Z _I} (\tau , \mu , P ^\alpha (x)), $$
i.e., by extending some cutoff of $ \psi $ along coordinate curves. We claim that
\begin{prop}
\label{Proof1}
The sum in Equation (\ref{Part}) converges absolutely and
the kernel $\bar \psi \in \Psi ^{- \infty } _{\varepsilon r _\fg ^{-1} C _\rG ; \bzero } (\cG)$.
\end{prop}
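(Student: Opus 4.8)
The plan is to split the statement into two essentially independent parts. Part (a): the family $\{\psi^{(\alpha)}_{X_J,Z_I}\}$ is locally finite on $\cG$ — in fact at every point only a uniformly bounded number of its members are non-zero — so that the sum (\ref{Part}) converges absolutely, $\bar\psi$ is continuous on $\cG$, and $\bar\psi|_{\cG_k}$ is smooth. Part (b): each member $\psi^{(\alpha)}_{X_J,Z_I}$, together with all its $\tilde\nabla$-derivatives, obeys a bound of the form $M_l e^{-\varepsilon'' d(a,\bs(a))}\prod_{i=1}^l(|X_i|+|Y_i|)$ for one fixed $\varepsilon''>\varepsilon r_\fg^{-1}C_\rG$, uniformly in $(\alpha,X_J,Z_I)$. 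Summing the boundedly many non-zero members then propagates this bound to $\bar\psi$, which is exactly $\bar\psi\in\Psi^{-\infty}_{\varepsilon r_\fg^{-1}C_\rG;\bzero}(\cG)$. Part (a) is the routine half: the indices $\alpha$ and $X_J$ run over finite sets, by Lemma \ref{GCover}(i) the cover $\{B(\exp Z^\rG_I,r)\}$ of $\rG$ is locally finite with uniformly bounded multiplicity (so for a prescribed $\rG$-coordinate of $a$ at most $N_\rG$ of the $Z_I$ occur, regardless of $|I|$), and if $a\notin\cG_1$ the cut-off $\theta(2e^{\omega r_\fg|I|}r^{-1}\rho(x))$ vanishes unless $|I|\le(\omega r_\fg)^{-1}\log(r/\rho(\bs(a)))$; continuity across $\cG_1$ and smoothness on $\cG_0$ and on $\cG_1$ follow because each $\psi^{(\alpha)}_{X_J,Z_I}$ is smooth and compactly supported inside its chart and $\psi$ is continuous on $\cG_1$ and smooth on each stratum.

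For the pointwise estimate in part (b), fix $a=\bx^{(\alpha)}_{X_J,Z_I}(\mu,\tau,x)$ in the support of $\psi^{(\alpha)}_{X_J,Z_I}$ and put $c:=\bx^{(\alpha)}_{X_J,Z_I}(\mu,\tau,P^\alpha(x))\in\cG_1$, the point at which $\psi$ is read off. I would combine two estimates. First, the source-distance of $c$ is large: using the product structure $\cG_1\cong\rG\times\rM_1\times\rM_1$, the projection $\cG_1\to\rG$ is a groupoid morphism, hence — the metrics being right-invariant and the splitting $\rA|_{\rM_1}=\fg\oplus T\rM_1$ orthogonal — it is $1$-Lipschitz on each $\bs$-fibre; since the $\rG$-component of $c$ is $\exp(\mu\cdot(Y_1,\dots,Y_p))\exp Z^\rG_I(e)$ up to the bounded factors coming from $\exp X_J$ and $\exp(\tau\cdot X^{\alpha,I})$, Lemma \ref{GCover}(iii) gives $d(c,\bs(c))\ge C_\rG(|I|-1)-K_0$, whence $|\psi(c)|\le Me^{-\varepsilon'(C_\rG(|I|-1)-K_0)}$ for some $\varepsilon'>\varepsilon$. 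Second, the path in $\cG_{\bs(a)}$ from $\bs(a)$ to $a$ obtained by running in turn the flows defining $\exp Z_I$, then $\exp X_J$, then $\exp(\tau\cdot X^{\alpha,I})$ and $\exp(\mu\cdot(Y_1,\dots,Y_p))$, has length $\le r_\fg|I|+K_1$: by right-invariance of the fibre metric the $m$-th segment of $\exp Z_I$ has length $|Z_m|\le r_\fg$ — this does not deteriorate near $\rM_1$ — and the remaining segments are bounded because $X_J$ ranges over a finite family and $\tau,\mu\in(-\delta,\delta)^p$. Thus $|I|\ge r_\fg^{-1}(d(a,\bs(a))-K_1)$, and the two estimates combine to $|\psi^{(\alpha)}_{X_J,Z_I}(a)|\le M'e^{-\varepsilon'r_\fg^{-1}C_\rG\,d(a,\bs(a))}$.

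For the $\tilde\nabla$-derivatives I would expand $\tilde\nabla^l(\tilde\bm^*\psi^{(\alpha)}_{X_J,Z_I})$ in the chart $\bx^{(\alpha)}_{X_J,Z_I}$ via the formulas of the preceding subsection: each right-invariant field becomes a coordinate field twisted by iterated maps $E_{(\cdot)}$, and by (\ref{XInverse}) $d\tilde\bm(0\oplus W^\br)$ is such a right-invariant field, with coefficient $E_{\mu_1Y_1}\circ\cdots\circ E_{Z_1}W$, plus a base-direction term whose $d\bs$-image is $\nu(W)(\bs(a))$. Three observations close the estimate: the iterated $E$-maps are bounded by $Ke^{C(\log|I|)^2}$, hence subexponential in $|I|$ (Lemma \ref{VGrowthEst}); the fixed partition functions $\theta^\alpha_{X_J}$ and $\theta^\rG_{Z_I}$ have $\alpha,J,I$-uniformly bounded derivatives; and although differentiating $\theta(2e^{\omega r_\fg|I|}r^{-1}\rho(x))$ in a base direction releases a factor $e^{\omega r_\fg|I|}$, it appears multiplied by $(d\rho\circ\nu)(W)(\bs(a))$, which on the support of the cut-off is $O(\rho(\bs(a)))=O(e^{-\omega r_\fg|I|})$ by Lemma \ref{LocalDegen}, so the two cancel. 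Hence the $l$-fold derivative is a sum of boundedly many terms, each a product of factors subexponential in $|I|$ times a value or derivative of $\psi$ at $c$ bounded by $M_le^{-\varepsilon'(C_\rG(|I|-1)-K_0)}\prod_i(|X_i|+|Y_i|)$; inserting $|I|\ge r_\fg^{-1}(d(a,\bs(a))-K_1)$ and absorbing $e^{C(\log|I|)^2}$ into the exponential for large $d(a,\bs(a))$ (the case of bounded $d(a,\bs(a))$ being trivial since bounded $\bs$-fibre balls are relatively compact) gives the bound of part (b) for any $\varepsilon''\in(\varepsilon r_\fg^{-1}C_\rG,\varepsilon'r_\fg^{-1}C_\rG)$. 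Summing over the uniformly bounded set of charts through $a$ then finishes the proof.

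The step I expect to be the main obstacle is the lower bound $d(c,\bs(c))\ge C_\rG(|I|-1)-K_0$: the decay of $\psi$ is measured in the intrinsic $\bs$-fibre metric of $\cG_1$, and one must rule out that this metric is so contracted along the $\rG$-directions that the $|I|$ steps of $\exp Z_I$ accumulate much less distance than the corresponding distance $C_\rG(|I|-1)$ in $\rG$; the groupoid-morphism/$1$-Lipschitz argument is the single point where the product structure $\cG_1\cong\rG\times\rM_1\times\rM_1$ is genuinely used. A secondary delicate point is the cancellation of the cut-off factor $e^{\omega r_\fg|I|}$ against $(d\rho\circ\nu)(W)=O(\rho)$, without which the higher $\tilde\nabla$-derivatives of $\bar\psi$ would fail to decay.
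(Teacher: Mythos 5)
Your proposal follows essentially the same route as the paper's proof in its core, part (b): the two-sided comparison between $|I|$ and the fibre distances --- $d(a,\bs(a))\le r_\fg|I|+C'$ from the length of the defining path, and $d(c,\bs(c))\ge C_\rG(|I|-1)-C''$ from Lemma \ref{GCover}(iii) --- which combine to the termwise bound $M'e^{-\varepsilon' C_\rG r_\fg^{-1}d(a,\bs(a))}$; and, for the derivatives, the splitting (\ref{XInverse}) into a right-invariant part controlled by the sub-exponential factor $Ke^{C(\log|I|)^2}$ of Lemma \ref{VGrowthEst} and a base-direction part in which the factor $e^{\omega r_\fg|I|}$ released by differentiating the cut-off is cancelled against $d\rho\circ\nu(W)=O(\rho)=O(e^{-\omega r_\fg|I|})$ on the support, via Lemma \ref{LocalDegen}. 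Your additional justification of the lower bound on $d(c,\bs(c))$ through the $1$-Lipschitz projection $\cG_1\to\rG$ addresses a point the paper passes over in silence, and is a genuine improvement in rigour.

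The one place where you diverge from the paper, and where your argument has a gap as written, is part (a). You assert that at every point of $\cG$ only a uniformly bounded number of the $\psi^{(\alpha)}_{X_J,Z_I}$ are non-zero, deducing this from the bounded multiplicity of the cover $\{B(\exp Z^\rG_I,r)\}$ of $\rG$ ``for a prescribed $\rG$-coordinate of $a$''. But a point $a\in\cG_0=\rM_0\times\rM_0$ has no $\rG$-coordinate: it is determined by the pair $(\bt(a),\bs(a))$ alone, so the same $a$ can a priori be represented as $\bx^{(\alpha)}_{X_J,Z_I}(\mu,\tau,x)$ for many genuinely different $Z_I$ (distinct ``lifts'' of $a$ towards $\rG\times\rM_1\times\rM_1$), and the bounded multiplicity of the cover of $\rG$ does not by itself bound the number of non-vanishing terms at $a$. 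The paper sidesteps this entirely: it sums the termwise bound, which decays exponentially in $|I|$, against the at most polynomially many (in $n$) indices $Z_I$ with $|I|=n$ supplied by the polynomial volume growth hypothesis; this is what the phrase ``by the polynomial growth of $\cG$'' is doing there. Since your part (b) already furnishes exactly the required exponential decay in $|I|$, the repair is immediate --- but the local finiteness claim itself should either be proved (by showing that distinct admissible lifts of the same $a$ land in a bounded number of balls of the $\rG$-cover) or replaced by the summation argument.
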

\begin{proof}
Given each $U ^\alpha _{X _{J} , Z _I}$, 
consider $d (a , \bs (a))$ and $d (b , \bs (b))$ for any
$a \in U ^\alpha _{X _{J} , Z _I} , b \in U ^\alpha _{X _{J} , Z _I} \bigcap (\cG _1 )$.
By construction, there is a path of length $\leq |I| r _\fg + C' $ joining $a $ and $\bs (a)$,
for some constant $C ' $ independent of $X_{J} , Z _I$.
It follows that $ d (a , \bs (a) ) \leq |I| r _\fg + C' $. 
On the other hand, by (3) of Lemma \ref{GCover}
one has $d (b , \bs (b)) \geq C _\rG |I| - C'' $ for some $C'' > 0 $ independent of $Z _I$.
Rearranging, one gets
$$ d (b , \bs (b) ) \geq \frac{C _\rG  d (a , \bs (a) ) - C'}{r _\fg } - C''.$$

Since by definition, for any $ a \in U ^\alpha _{X _{J} , Z _I}$,
$$ \psi ^{( \alpha )} _{X _{J} , Z _I} (a) = \psi (b) \theta ^\alpha _{X _{J} , Z _I} (a), $$
for some $b \in U ^\alpha _{X _{J} , Z _I} \bigcap \cG _1$,
it follows from our assumption $\psi \in \Psi ^\infty _{\varepsilon , \bzero } (\cG _1 )$ that 
$$ \psi ^{( \alpha )} _{X _{J} , Z _I} (a) \leq M e ^{- \varepsilon ' d (b , \bs (b))}
\leq M ' e^{ - \varepsilon ' C _\rG r _\fg ^{-1} d (a , \bs (a)) } ,$$ 
for some $\varepsilon ' > \varepsilon $.
By the polynomial growth of $\cG$, it follows that 
$\sum _{X _J , Z _I } \psi ^{( \alpha )} _{X _{J} , Z _I} (a) $ 
converges uniformly absolutely and satisfies the estimate
$$ \sum _{X _J , Z _I } \psi ^{( \alpha )} _{X _{J} , Z _I} (a) 
\leq M '' e^{ - \varepsilon ' C _\rG r _\fg ^{-1} d (a , \bs (a)) } ,$$
for some $M'' > 0$.

We turn to the derivatives of $\psi ^{( \alpha )} _{X _{J} , Z _I}$.
Consider $L _{d \tilde \bm (V ^\br \oplus W ^\br )} \psi ^{( \alpha )} _{X _{J} , Z _I} $, 
$V , W \in \Gamma ^\infty (\rA )$.
Write $ V ^\br (\bx ^{(\alpha )} _{X _{J} , Z _I} (\mu , \tau , x ) ) = \sum _l v _l (\mu , \tau , x) \partial _l $.
Then it follows from definition of $\psi ^{( \alpha )} _{X _{J} , Z _I}$ that 
$$ (L _{V ^\br } \psi ^{( \alpha )} _{X _{J} , Z _I}) (\bx ^{(\alpha )} _{X _{J} , Z _I} (\mu , \tau , x ) )
= \sum _l v _l (\mu , \tau , x) (\partial _l \psi )(\tau , \mu , P _\alpha (x) ).$$
Since $v _l (\mu , \tau , x) |V| ^{-1}$ are bounded for all $l$. 
The same arguments for the exponential decay as above can be applied.

We turn to $ L _{d \tilde \bm (0 \oplus W ^\br ) } (\psi ^{( \alpha )} _{X _{J} , Z _I} )$.
We use expression (\ref{XInverse}).
By definition, \\
$\Supp ( \psi ^{( \alpha )} _{X _{J} , Z _I} ) \subseteq \bs ^{-1} (  B (\rM _1 , e ^{- \omega r _\fg |I|} r ))$,
therefore it suffices to consider
$$ L _{d \tilde \bm (0 \oplus W ^\br ) } \psi ^{( \alpha )} _{X _{J} , Z _I} 
(\bx ^{( \alpha )} _{X _{J} , Z _I} (\tau , \mu , x ) ) , 
\quad \Forall W \in \rA |_ { B (\rM _1 , e ^{- \omega r _\fg |I|} r )} .$$
Hence Lemma \ref{VGrowthEst} can be applied to get 
\begin{align*}
L_{( E _{s _1 Y _1 } \circ \cdots \circ E _{Z _1 } W  )^\br } 
(\psi ^{( \alpha )} _{X _{J} , Z _I}) & (\bx ^{(\alpha )} _{X _{J} , Z _I} (\mu , \tau , x ) ) \\
=& e ^{N' (\log |I| )^2} \sum _l w _l (\mu , \tau , x) (\partial _l \psi )(\tau , \mu , P _\alpha (x) ),
\end{align*}
for some functions $w _l $ that are bounded (independent of $I$).

As for 
$d ( \exp (\mu \cdot ( Y _1 , \cdots , Y _p )) 
\exp (\tau \cdot (X ^{\alpha , I} _1 , \cdots , X ^{\alpha , I} _p )) \exp X _{J} \exp Z _I (x)) \nu (W)(\bs (a))$,
write $\nu (W) = \sum _l u _l \partial _{x _l} $ on $U ^{(\alpha )}$.
Then observe that 
\begin{align*}
d \big( \exp (\mu \cdot ( Y _1 , \cdots , Y _p ))
\exp (\tau \cdot (X ^{\alpha , I} _1 , \cdots , X ^{\alpha , I} _p )) & \exp X _{J} \exp Z _I (x) \big) \nu (W) \\
=& \sum _l u _l (x) \partial _{x _l } \quad \text{ on } U ^\alpha _{X _{J} , Z _I}.
\end{align*}
Differentiating $\psi ^{( \alpha )} _{X _{J} , Z _I}$, we get
\begin{align*}
\sum _l u _l (x) \partial _{ x _l } \psi ^{( \alpha )} _{X _{J} , Z _I} & (\mu , \tau , x ) \\
=& \theta ^\alpha _{X _{J}} \theta ^\rG _{Z _I} \psi (\bx ^{(\alpha )} _{X _{J} , Z _I} (\tau , \mu , P ^\alpha (x)) 
\sum _l u _l (x) \partial _{x _l } \theta (2 e ^{\omega r _\fg |I|} r ^{-1} \rho (x )).
\end{align*}
By Lemma \ref{LocalDegen}, and the observation that $x \in B (\rM _1 , e ^{- \omega r _\fg |I|} r )$,
it follows that \\
$\sum _l u _l (x) \frac{\partial }{\partial x _l } \theta (2 e ^{\omega r _\fg |I|} r ^{-1} \rho (x ))$
is also bounded.
Hence we conclude that
$$ L _{d \tilde \bm (V ^\br \oplus W ^\br)}
\psi ^{( \alpha )} _{X _{J} , Z _I} (a) \leq M _1 e^{ - \varepsilon '' C _\rG r _\fg ^{-1} d (a , \bs (a)) } 
(|V | + |W|),$$
for some $\varepsilon '' > \varepsilon $, 
and similar estimate holds for all derivatives. 
Therefore $\bar \psi \in \Psi ^{- \infty } _{\varepsilon r _\fg ^{-1} C _\rG } (\cG)$.  
\end{proof}

Finally, we prove that
\begin{prop}
\label{Proof2}
Suppose $\cG$ is uniformly degenerate.
For any $\kappa \in \Psi ^{- \infty } (\cG ) $ and differential operator $D \in \Psi ^{[m]} (\cG )$,
such that $ D |_{\cG _k } \bar \psi = \kappa | _{\cG _k} $,
$$ D \psi - \kappa \in \Psi ^{- \infty } _{\varepsilon ' ; \lambda } (\cG ).$$
\end{prop}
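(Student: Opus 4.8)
The plan is to localise near the invariant submanifold $\cG_1$, to observe that there $\bar\psi$ (hence $D\bar\psi$) is smooth and $D\bar\psi-\kappa$ vanishes on $\cG_1$, and then to upgrade ``vanishing to first order at $\cG_1$'' to membership in the filtered space $\Psi^{-\infty}_{\varepsilon';1}(\cG)$ by a Hadamard-type estimate, exploiting that the fields $d\tilde\bm(X^\br\oplus Y^\br)$ appearing in Definition \ref{FFDegen} are tangent to $\cG_1$ up to an error that vanishes on $\cG_1$. First I would record that, by Proposition \ref{Proof1} together with Lemma \ref{DiffIdeal}, $D\bar\psi\in\Psi^{-\infty}_{\varepsilon_0;\bzero}(\cG)$ with $\varepsilon_0=\varepsilon r_\fg^{-1}C_\rG$, while $\kappa\in\Psi^{-\infty}(\cG)$, being a uniformly supported smoothing kernel, enjoys every exponential decay rate and so lies in $\Psi^{-\infty}_{\varepsilon_0;\bzero}(\cG)$; hence $f:=D\bar\psi-\kappa\in\Psi^{-\infty}_{\varepsilon_0;\bzero}(\cG)$. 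On the complement of $\bs^{-1}(B(\rM_1,\delta))$ one has $\hat\rho\geq\bs^*\rho\geq c_\delta>0$, so there the factor $\hat\rho^{-1}$ is bounded and $f$ automatically satisfies the bounds defining $\Psi^{-\infty}_{\varepsilon_0;1}(\cG)$; it therefore suffices to establish the $\hat\rho$-decay of $f$ on $\bs^{-1}(B(\rM_1,\delta))$ for $\delta$ small.

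Next I would fix $\delta$ so small that for every index $(X_J,Z_I)$ occurring in (\ref{Part}) the cut-off $\theta\big(2e^{\omega r_\fg|I|}r^{-1}\rho(\cdot)\big)$ is either identically $0$ or identically $1$ on $\bs^{-1}(B(\rM_1,\delta))$. Then on a neighbourhood of $\cG_1$ the kernel $\bar\psi$ coincides with $\sum_{X_J,Z_I}\theta^\alpha_{X_J,Z_I}\,\psi\big(\bx^{(\alpha)}_{X_J,Z_I}(\tau,\mu,P^\alpha(x))\big)$, that is, with the pull-back of $\psi|_{\cG_1}$ along the smooth retraction $x\mapsto P^\alpha(x)$ combined with the partition of unity; since $\psi|_{\cG_1}$ is smooth and the series converges with all its derivatives (Proposition \ref{Proof1}), $\bar\psi$ is smooth on a neighbourhood of $\cG_1$, and so is $D\bar\psi$. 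As $\kappa$ is smooth on all of $\cG$, $f$ is smooth near $\cG_1$, and by hypothesis $f|_{\cG_1}=D|_{\cG_1}\bar\psi-\kappa|_{\cG_1}=0$. Differentiating the patch formula for $D\bar\psi$ once more in a direction transverse to $\cG_1$ and re-running the estimate of Proposition \ref{Proof1} — the sub-exponential growth of the charts from Lemma \ref{VGrowthEst} is again dominated by the $\bs$-fibre decay $e^{-\varepsilon_0 d}$ — one sees that the transverse derivative of $f$, and any further tangential derivative of it, is likewise bounded near $\cG_1$ with the same exponential decay.

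Then, using (\ref{ReducedVF}), $d\tilde\bm(X^\br\oplus 0)=X^\br$ has $d\bs\,X^\br=0$, hence is tangent to the $\bs$-fibres and to $\cG_1=\bs^{-1}(\rM_1)$; while $d\bs\big(d\tilde\bm(0\oplus Y^\br)\big)(c)=\nu(Y)(\bt(c))$, whose component normal to $\rM_1$ is $\leq\omega_1\rho(\bt(c))\,|Y|$ by Lemma \ref{LocalDegen} (and $\leq\omega_1\rho(\bt(c))^{\lambda_1}|Y|$ under uniform degeneracy), a smooth quantity vanishing on $\cG_1$; using Theorem \ref{DEst} this bound becomes $\lesssim\hat\rho(c)e^{\omega d(c,\bs(c))}|Y|$, the exponential factor being absorbed by the decay. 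So on $\bs^{-1}(B(\rM_1,\delta))$ every $V=d\tilde\bm(X^\br\oplus Y^\br)$ splits as $V=V^{\mathrm{tan}}+c_V\,\partial_\nu$ with $V^{\mathrm{tan}}$ tangent to $\cG_1$, $\partial_\nu$ a fixed smooth transverse field and $c_V$ smooth with $c_V|_{\cG_1}=0$; the Christoffel terms of $\tilde\nabla$ contribute only further right-invariant fields $(\nabla_{V_1}V_2)^\br$, again tangent to $\cG_1$. It then follows by induction on $l$ that $\tilde\nabla^l(\tilde\bm^*f)(X_l^\br\oplus Y_l^\br,\dots,X_1^\br\oplus Y_1^\br)$ is a finite sum of iterated derivatives of $f$ along fields each of which is either tangent to $\cG_1$ or of the form $c\,\partial_\nu$ with $c|_{\cG_1}=0$; since differentiating along a field tangent to $\cG_1$, or multiplying by a function vanishing on $\cG_1$, preserves the property of vanishing on $\cG_1$, every such expression vanishes on $\cG_1$, and is smooth near $\cG_1$ with controlled transverse derivative by the previous step.

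Finally I would conclude with a local Hadamard argument: fixing a chart of the exponential cover (\ref{ExpCover}) in which $\cG_1=\{x_{q+1}=\cdots=x_n=0\}$ and writing $g:=\tilde\nabla^l(\tilde\bm^*f)(X_l^\br\oplus Y_l^\br,\dots)$, one has $g=\sum_{j>q}x_j h_j$ with $h_j$ smooth, hence $|g|\leq C\,\rho(\bs(\cdot))\sup|\partial_\nu g|$ on the chart; by compactness of $\rM_1$ and the exponential decay above, and since the integration segment has length $\leq C\rho(\bs(a))\leq C\delta$ (along which $d(\cdot,\bs(\cdot))$ changes by at most $C\delta$, so no rate is lost), this yields $|g(a)|\leq M_l\,\hat\rho(a)\,e^{-\varepsilon' d(a,\bs(a))}\prod_i(|X_i|+|Y_i|)$ on $\bs^{-1}(B(\rM_1,\delta))$, using $\rho(\bs(a))\leq\hat\rho(a)$. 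Combined with the first paragraph this gives $f=D\bar\psi-\kappa\in\Psi^{-\infty}_{\varepsilon';1}(\cG)$, i.e.\ the asserted membership in $\Psi^{-\infty}_{\varepsilon';\lambda}(\cG)$ with $\lambda=1$. The hard part will be the second and third paragraphs: verifying that the patch-by-patch construction (\ref{Part}) really produces a kernel smooth across $\cG_1$ whose transverse derivatives inherit the exponential decay of $\bar\psi$ (which rests on the cut-offs being locally constant near $\cG_1$ and on Lemma \ref{VGrowthEst}), and carrying out the bookkeeping that converts ``tangent to $\cG_1$ up to a vanishing error'' into the vanishing on $\cG_1$ of every $\tilde\nabla$-derivative tested against the fields $d\tilde\bm(X^\br\oplus Y^\br)$.
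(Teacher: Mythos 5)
Your overall strategy --- restrict to a neighbourhood of $\cG_1$, use $f|_{\cG_1}=0$ together with a bound on the transverse derivative, and integrate transversally (Hadamard) to gain a factor $\rho(\bs(\cdot))\leq\hat\rho$ --- is exactly the mechanism of the paper's proof, which integrates $\partial_{x_i}L_{d\tilde\bm(V^\br\oplus W^\br)}\psi^{(\alpha)}_{X_J,Z_I}$ from $P^\alpha(x)$ to $x$ chart by chart. However, there is a genuine gap in your second paragraph: no $\delta>0$ makes every cut-off $\theta\big(2e^{\omega r_\fg|I|}r^{-1}\rho(\cdot)\big)$ locally constant on $\bs^{-1}(B(\rM_1,\delta))$. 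The transition zone of the cut-off indexed by $Z_I$ is $\{\rho\sim e^{-\omega r_\fg|I|}r\}$, and since the cover of the (in general non-compact) group $\rG$ from Lemma \ref{GCover} contains indices with $|I|\to\infty$, these zones accumulate on $\cG_1$. So $\bar\psi$ does \emph{not} coincide with the cut-off-free pull-back of $\psi|_{\cG_1}$ on any neighbourhood of $\cG_1$, and the transverse derivatives of the cut-offs, of size $e^{\omega r_\fg|I|}$, must be estimated rather than assumed away.

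This is not a cosmetic omission: it is precisely where the hypothesis that $\cG$ is uniformly degenerate enters, and your proposal never actually uses it. In the paper's argument the transverse integration yields, on the chart indexed by $(X_J,Z_I)$, an error of size $e^{\omega r_\fg|I|}\rho(\bs(a))$ (from the chain rule applied to the cut-off and from the maps $E^\nu_{Z_I}$), and one can only sum over the infinitely many $Z_I$ because the decay $e^{-\varepsilon' C_\rG r_\fg^{-1}|I|}$ of $\psi$ in the $\rG$-direction dominates $e^{\omega r_\fg|I|}$ once $\omega$ is made small --- which is possible exactly because $\cG$ is uniformly degenerate. Your Hadamard step, which uses a single uniform bound $\sup|\partial_\nu g|$, cannot work as stated, since that supremum over all charts is infinite; the bound must be kept chart-dependent and the growth in $|I|$ absorbed into the exponential decay before summing. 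The remainder of your argument (the tangency decomposition of $d\tilde\bm(X^\br\oplus Y^\br)$ via Lemma \ref{LocalDegen}, the vanishing on $\cG_1$ of all tested derivatives, the reduction away from $\cG_1$ where $\hat\rho$ is bounded below) is sound and consistent with the paper; it is this summation and uniformity issue that needs to be repaired, essentially by re-instating the paper's chart-by-chart estimates.
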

\begin{proof}
Replacing $\kappa $ by an extension of $\kappa |_{\rM _1 }$ similar to $\bar \psi $,
we may without loss of generality assume $\kappa = 0 $.

Consider $\partial _{x _i} L _{d \tilde \bm (V ^\br \oplus 0 )}\psi ^{(\alpha )} _{X _J , Z _I} $ 
on $U ^\alpha _{X _{J} , Z _I}$.
Recall that $\bx ^\alpha _{\emptyset} ( \mu ' , \tau ' , x ' )$ is a local coordinates chart around 
$ \bt \circ E ^\nu _{X _J} E ^\nu _{Z _I} ( U ^\alpha ) $. Write 
$ V ^\br = \sum _{l = 1 } ^n v _l (x') \partial _{(\mu' , \tau ') _l}$
on $ U ^{(\alpha )} _{\emptyset}$.
Then 
$ V ^\br (\bx ^{(\alpha )} _{X _J , Z _I} ( \mu , \tau , x )) 
= \sum _{l = 1} ^n u _l ( \mu , \tau , E ^\nu _{X _J} E ^\nu _{Z _I } (x)) \partial _{(\mu , \tau ) _l}.$
A straightforward calculation gives
\begin{align*} 
\partial _{x _i} L _{d \tilde \bm (V ^\br \oplus 0 )}& \psi ^{(\alpha )} _{X _J , Z _I} (\mu , \tau , x) \\
=& \theta (2 e ^{\omega r _\fg |I|} r ^{-1} \rho (x ))
\sum _{l = 1} ^n \partial _{x _i} ( u _l ( \mu , \tau ,  E ^\nu _{X _J} E ^\nu _{Z _I } (x))
(\partial _{(\mu , \tau ) _l} \theta ^\alpha _{X _J} \theta ^\rG _{Z _I} ) \psi \\
&+ \big( \partial _{x _i} \theta (2 e ^{\omega r _\fg |I|} r ^{-1} \rho (x )) \big)
\sum _{l = 1} ^n ( u _l ( \mu , \tau ,  E ^\nu _{X _J} E ^\nu _{Z _I } (x))
(\partial _{(\mu , \tau ) _l} \theta ^\alpha _{X _J} \theta ^\rG _{Z _I} ) \psi .
\end{align*}
Since $ E _{Z } $ equals identity on $\rM _ 1 $ for all $Z \in \Span _\bbR \{ Y _1 , \cdots , Y _p \}$,
it follows that there exists some constants $M > 0 $ such that for all $Z \in \Span _\bbR \{ Y _1 , \cdots , Y _p \}$,
$|Z | \leq r _\fg $,
$$| d E ^\nu _{Z } X | \leq ( 1 + M d (x , \rM _1 ))|X| , \Forall X \in T _x M .$$
Iterating, one gets
\begin{equation}
\label{DecayChain}
| d E ^\nu _{Z _I } \partial _{x _i} (x) | 
\leq e ^{\sum _{i = 1} ^{|I| } \log (1 + M e ^{- \omega r _\fg ( |I| - i)} r )} |\partial _{x _i} (x)|.
\end{equation} 
It is elementary that $\sum _{i = 1} ^{|I| } \log (1 + M e ^{- \omega r _\fg ( |I| - i)} r)$ converges.
It follows by integrating 
$\partial _{x _i} L _{d \tilde \bm (V ^\br \oplus 0 )} \psi ^{(\alpha )} _{X _J , Z _I} (\mu , \tau , x)$
with respect to $x _i $ that 
\begin{align*}
L _{d \tilde \bm (V ^\br \oplus 0 )} \psi ^{(\alpha )} _{X _J , Z _I} (\mu , \tau , x)
\leq & L _{d \tilde \bm (V ^\br \oplus 0 )} \psi ^{(\alpha )} _{X _J , Z _I} (\mu , \tau , P ^{(\alpha )} (x)) 
+ M' e ^{- \varepsilon ' d (\bx ^{(\alpha )} _{X _J , Z _I} (\mu , \tau , x ) , x)}\\
& \times ( \rho (\bt (\bx ^{(\alpha )} _{X _J , Z _I} (\mu , \tau , x ))) 
+ e ^{\omega r _\fg |I|} \rho (\bs (\bx ^{(\alpha )} _{X _J , Z _I} (\mu , \tau , x ))) ) ,
\end{align*}
for some constant $M'$.
 
The case $\partial _{x _i} L _{d \tilde \bm (0 \oplus W ^\br)} \psi ^{(\alpha )} _{X _J , Z _I} (\mu , \tau , x) $
is similar.
Again write
\begin{align*}
d \big( \exp (\mu \cdot ( Y _1 , \cdots , Y _p ))
\exp (\tau \cdot (X ^{\alpha , I} _1 , \cdots , X ^{\alpha , I} _p )) & \exp X _{J} \exp Z _I (x) \big) \nu (W) \\
=& \sum _l u _l (x) \partial _{x _l } \quad \text{ on } U ^\alpha _{X _{J} , Z _I}.
\end{align*}
Then one has
\begin{align*}
\partial _{x _i} & \sum _l u _l (x) \partial _{x _l } \psi ^{(\alpha )} _{X _J , Z _I} (\mu , \tau , x) \\
=& \psi (\bx ^{(\alpha )} _{X _{J} , Z _I} (\tau , \mu , P ^\alpha (x)) 
\partial _{x _i} \Big( \theta ^\alpha _{X _{J}} \theta ^\rG _{Z _I} 
\sum _l u _l (x) \partial _{x _l } \theta (2 e ^{\omega r _\fg |I|} r ^{-1} \rho (x )) \Big) .
\end{align*}
It is clear that 
$\partial _{x _i} \big( \theta ^\alpha _{X _{J}} \theta ^\rG _{Z _I} 
\sum _l u _l (x) \partial _{x _l } \theta (2 e ^{\omega r _\fg |I|} r ^{-1} \rho (x )) \big)$
is bounded.

As for $\partial _{x _i} E _{X _J} E _{Z _I} W $,
%fix a local basis $\{ X ^{(\alpha)} _1 , \cdots , X ^{(\alpha)} _n \}$ on $U ^\alpha $.
for each $Z \in \Span _\bbR \{ Y _1 , \cdots , Y _p \}$, write 
$$E _Z \partial _{(\mu ' , \tau ') _l} (x ') 
:= \sum _{l l'} f ^Z _{l l'} (E ^\nu _Z (x ')) \partial _{(\mu ' , \tau ') _l'} (E ^\nu _Z (x ')),$$
for some smooth functions $f ^Z _{l l'}$.
Then one can express $E _{Z _I} W $ as
\begin{align*}
(E _{Z _I} W )^\br ( \bx ^{(\alpha )} _{\emptyset} (\mu ' , \tau ', x ')) =
\sum _{l, l _1 , l _2 , \cdots l _{|I|}, l'} &
f ^{Z _{|I|}} _{l l _{|I|}} (E ^\nu _{Z _{|I|}} \cdots E ^\nu _{Z _1} (x') ) 
\times \cdots \times f ^{Z _1 } _{l l _1} ( E ^\nu _{Z _1} (x') ) \\
& \times w _l (x) \partial _{(\mu' , \tau ') _{l'}} (E ^\nu _{Z _{|l|}} \cdots E ^\nu _{Z _1} (x') ), 
\end{align*}
where $ W ^\br = \sum _{l = 1 } ^n w _l (x') \partial _{(\mu' , \tau ') _l}$
and $x' = (E ^\nu _{Z _{|l|}} \cdots E ^\nu _{Z _1} )^{-1} (x)$ .
Differentiating with respect to $x _i $ and using the estimates (\ref{DecayChain}) and Lemma \ref{VGrowthEst},
one again obtains    
\begin{align*}
L _{d \tilde \bm (0 \oplus W ^\br )} \psi ^{(\alpha )} _{X _J , Z _I} (\mu , \tau , x)
\leq & L _{d \tilde \bm (0 \oplus W ^\br )} \psi ^{(\alpha )} _{X _J , Z _I} (\mu , \tau , P ^{(\alpha )} (x)) 
+ M'' e ^{- \varepsilon ' d (\bx ^{(\alpha )} _{X _J , Z _I} (\mu , \tau , x ) , x)}\\
& \times |I| e ^{(\log |I|)^2} e ^{\omega r _\fg |I|} \rho (\bs (\bx ^{(\alpha )} _{X _J , Z _I} (\mu , \tau , x ))) ,
\end{align*}
for some constant $M''$.

Clearly the same arguments applies for all higher derivatives and one gets similar estimates.
Since $\cG$ is uniformly degenerate, by choosing $U ^\alpha $ to be sufficiently small, 
$\omega $ can be made sufficiently small. 
Hence one can sum over all $X _J , Z _I $ and conclude that
$$ L _{d \tilde \bm (V ^\br \oplus W ^\br )} D \psi \in \Psi ^{- \infty } _{\bullet , 1 } (\cG ),$$
for any differential operators $D $.
\end{proof}
%
%\bibliography{PDO}
%\bibliographystyle{plain}

%%%%%%%%%%%%%%
\end{document}